\newcommand{\mcs}{\mathcal{S}}
\newcommand{\mca}{\mathcal{A}}
\newcommand{\mcH}{\mathcal{H}}
\newcommand{\mcx}{\mathcal{X}}
\newcommand{\mR}{\mathbb{R}}
\newcommand{\mcL}{\mathcal{L}}
\newcommand{\mE}{\mathbb{E}}
\newcommand{\prob}{\mathrm{Pr}}
\newcommand{\cb}{c}
\newcommand{\dtv}{\textrm{d}_{TV}}
\newcommand{\nn}{\nonumber}
\DeclareMathOperator*{\argmax}{arg\,max}
\DeclareMathOperator*{\argmin}{arg\,min}
\newtheorem{thm}{Theorem}
\newtheorem{assump}{Assumption} 
\newtheorem{prop}{Proposition}
\newtheorem{corlry}{Corollary}
\newtheorem{newlemma}{Lemma}
\newtheorem{definition}{Definition}
\newcommand{\tsum}{\textstyle{\sum}}
\newenvironment{proof}{\paragraph{Proof}}{\hfill$\square$}
\crefname{assump}{assumption}{assumptions}
\begin{document}

%

%

\twocolumn[

\aistatstitle{Faster Algorithm and Sharper Analysis for Constrained Markov Decision Process}

\aistatsauthor{ Tianjiao Li$^\star$ \And Ziwei Guan$^\dagger$ \And Shaofeng Zou$^\ddagger$  \And Tengyu Xu$^\dagger$ \And Yingbin Liang$^\dagger$ \And Guanghui Lan$^\star$ }

\aistatsaddress{ $^\star$Georgia Institute of Technology \qquad$^\dagger$The Ohio State University\\ $^\ddagger$University at Buffalo, The State University of New York }   ]

\begin{abstract}
  The problem of constrained Markov decision process (CMDP) is investigated, where an agent aims to maximize the expected accumulated discounted reward subject to multiple constraints on its utilities/costs. A new primal-dual approach is proposed with a novel integration of three ingredients: entropy regularized policy optimizer, dual variable regularizer, and Nesterov's accelerated gradient descent dual optimizer, all of which are critical to achieve a faster convergence. The finite-time error bound of the proposed approach is characterized. Despite the challenge of the nonconcave objective subject to nonconcave constraints, the proposed approach is shown to converge to the global optimum with a complexity of $\tilde{\mathcal O}(1/\epsilon)$ in terms of the optimality gap and the constraint violation, which improves the complexity of the existing primal-dual approach by a factor of $\mathcal O(1/\epsilon)$ \citep{ding2020natural,paternain2019constrained}. This is the first demonstration that nonconcave CMDP problems can attain the complexity lower bound of $\mathcal O(1/\epsilon)$ for convex optimization subject to convex constraints. Our primal-dual approach and non-asymptotic analysis are agnostic to the RL optimizer used, and thus are more flexible for practical applications. More generally, our approach also serves as the first algorithm that provably accelerates constrained nonconvex optimization with zero duality gap by exploiting the geometries such as the gradient dominance condition, for which the existing acceleration  methods for constrained convex optimization are not applicable. 
\end{abstract}

\section{Introduction}
In reinforcement learning (RL), an agent interacts with a stochastic environment over time in order to maximize its expected accumulated reward \citep{sutton2018reinforcement}. In many practical applications such as autonomous driving \citep{fisac2018general} and robotics \citep{ono2015chance}, it is also critical for an agent to meet certain safety constraints, for example, an accumulated {\em utility} should be kept above a given threshold (or equivalently an accumulated {\em cost} should be kept below a given threshold). Such safe RL problems \citep{garcia2015comprehensive} are commonly formulated as constrained Markov decision processes (CMDPs) \citep{altman1999constrained}.
%
One method used widely in practice is the primal-dual approach, which constructs a Lagrangian function and then solves a minimax optimization problem respectively over dual variables and the policy.
  
Various algorithms have been proposed based on the primal-dual approach \citep{tessler2018reward,ding2020provably,stooke2020responsive,yu2019convergent,achiam2017constrained,yang2019projection,altman1999constrained}. Theoretically, two recent studies \citep{paternain2019constrained} and \citep{ding2020natural} showed that algorithms with alternating updates between policy optimization over the policy and gradient descent over dual variables achieve the computational complexity of $\mathcal{O}(1/\epsilon^2)$ to attain an $\epsilon$-accurate solution (in terms of duality gap in \citep{paternain2019constrained}, and both the optimality gap and constraint violation  in \citep{ding2020natural}).
Such a complexity result appears natural, because the CMDP problem has a {\bf nonconcave} optimization objective with {\bf nonconcave} constraints. However, recent understanding about (C)MDPs does suggest a promise for a better rate. Several recent studies \citep{mei2020global,agarwal2019theory,bhandari2020note,cen2020fast} on policy gradient (PG) approaches showed that they can attain global optimum with as fast as linear convergence rate, which suggest that MDP objective has better geometry in nature beyond nonconcavity. These optimistic insights thus motivate us to aim at a more ambitious goal of achieving the lower bound $\mathcal O( 1/\epsilon)$ on the computational complexity given by \citep{ouyang2019lower,nemirovsky1992information} for {\bf convex} optimization with {\bf convex} constraints. Therefore, the following challenging but fundamental question is asked.
\begin{list}{}{\topsep=0.5ex \leftmargin=0.3in \rightmargin=0.3in \itemsep=0.0in} 
\item {\em Despite the nonconcave objective function and nonconcave constraints, can we develop an approach for CMDP to achieve the complexity lower bound of $\mathcal O(1/\epsilon)$ for convex optimization with convex constraints?} 
\end{list}
 
We provide an affirmative answer to the above question. Specifically, we develop a novel primal-dual approach for CMDP problems, and establish its non-asymptotic global convergence guarantee with the desired complexity of $\tilde{\mathcal O}(1/\epsilon)$, where the notation $\tilde{\mathcal O}(\cdot)$ omits the dependence on logarithm terms. 

Our approach can be naturally extended more broadly to the following nonconvex functional constrained optimization problem, which includes the above CMDP problem as a special case:
\begin{align}
\min_{x\in X}\; f_0(x),\quad \text{s.t.}\;\; f_i(x) \leq 0,\quad \text{for }i=1,\ldots,m,\label{cons_opt}
\end{align}
where $X\subseteq \mathbb{R}^n$ is a nonempty closed convex set and $f_i(x), i=0,\ldots,m$ are nonconvex  functions with $|f_i(x)|\leq G,~ \forall x \in X$. Existing literature on such a {\bf nonconvex constrained} problem is rather limited \cite{boob2019stochastic,facchinei2021ghost,wang2017penalty}, and all these studies provide only the guarantee to {\em stationary points}. We generalize our approach to exploit geometric properties that nonconvex constrained optimization may have such as the gradient dominance condition, and show that our approach achieves a fast convergence rate of $\tilde{\mathcal{O}}\left({1}/{\sqrt{\epsilon}}\right)$ with guaranteed convergence to the {\em global optimum}.

\subsection{Main Contributions}
Our main contributions in this paper can be summarized as follows.

We propose a novel primal-dual approach, called Accelerated and Regularized Constrained Policy Optimization (AR-CPO), for solving constrained MDPs. Our AR-CPO approach includes a novel blend of three ingredients: (a) entropy regularized policy optimizer, (b) dual variable regularizer, and (c) Nesterov’s accelerated gradient descent dual optimizer, each of which is necessary and indispensable for attaining the improved complexity.

We show that AR-CPO attains an $\epsilon$-accurate optimal policy with $\epsilon$-level constraint violation with a computational complexity of $\tilde{\mathcal O}(1/\epsilon)$, which improves the best known complexity of $\mathcal O(1/\epsilon^2)$ for the primal-dual approaches in \cite{ding2020natural,paternain2019constrained} by a factor of $\mathcal O(1/\epsilon)$. 
Interestingly, our result achieves the complexity lower bound for convex optimization with convex constraints up to a logarithmic dependence on $\epsilon$, although CMDP has nonconcave objective and nonconcave constraints.

Our technical analysis includes the following new developments.
{\bf (a)} To prove that the entropy-regularized dual function is smooth, our analysis involves a novel application of Mitrophanov's perturbation bound \citep{mitrophanov2005sensitivity} for showing that the visitation measure is Lipschitz with respect to (w.r.t.) the policy and further technical arguments that the optimal policy for the entropy-regularized MDP is Lipschitz w.r.t.\ its reward function and hence the dual variable. 
{\bf (b)} Our analysis of Nesterov’s accelerated gradient descent dual optimizer is non-trivial and is significantly different from the Nesterov's analysis for minimization problems. 
{\bf (c)} We exploit the first-order optimality condition of the dual variable update, and establish an upper bound on the optimality gap and constraint violation using dual variable updates. 

We further extend our approach to accelerate a class of nonconvex functional constrained optimization problems with zero duality gap (see \cref{cons_opt}). By exploiting the geometric properties such as the gradient dominance condition, our approach achieves the $\mathcal{O}\left({1}/{\sqrt{\epsilon}}\right)$ rate of convergence to the global optimum, which also matches the lower bound of smooth convex optimization up to some logarithmic factor. This is the first global convergence guarantee for nonconvex constrained optimization in the literature.

\subsection{Related Works}

\textbf{Primal-dual approach to CMDP.} The primal-dual approach is a popular method to solve the CMDP problem.
Based on such an approach, various algorithms have been proposed \citep{tessler2018reward,ding2020provably,stooke2020responsive,yu2019convergent,achiam2017constrained,yang2019projection,altman1999constrained}. On the theory side, \citep{tessler2018reward} provided an asymptotic convergence analysis for primal-dual method and established a local convergence guarantee under certain stability assumptions. \citep{paternain2019constrained} showed that the primal-dual method achieves zero duality gap. \citep{ding2020provably} proposed a primal-dual type proximal policy optimization (PPO) and established the regret bound for {\em linear} CMDP. More recently, \citep{ding2020natural} provided a finite-time analysis for an alternating updating algorithm between natural policy gradient and project gradient descent. Both \citep{paternain2019constrained} and \citep{ding2020natural} showed that the existing common primal-dual framework achieves the complexity order of $\mathcal{O}(1/\epsilon^2)$. This paper proposes a new primal-dual approach, which enhances such a complexity order by a factor of $\mathcal{O}(1/\epsilon)$.

\noindent\textbf{Other approaches to CMDP.} In addition to the primal-dual approaches, other methods have also been proposed to solve the CMDP problem. Notably, \citep{liu2019ipo} developed an interior point method, which applies logarithmic barrier functions for CMDP. \citep{chow2018lyapunov,chow2019lyapunov} leveraged Lyapunov functions to handle constraints. \citep{dalal2018safe} introduced a safety layer to the policy network to enforce constraints. None of these methods is provably convergent to an optimal feasible policy. Recently, \citep{xu2020primal} proposed a new primal approach which alternatively updates between policy maximization and constraint descent (if violated). Such an algorithm has been shown to converge to the optimal policy.

\noindent\textbf{Global optimality of policy optimization.} In the unconstrained setting,  recent studies have established the global convergence of policy optimization algorithms despite the nonconvexity of the objective. For general MDPs, it has been shown that both policy gradient and natural policy gradient converge to the global optimal policy in the tabular or function approximation setting \cite{agarwal2019optimality, wang2019neural,liu2019neural,xu2020improving}. For regularized MDP, entropy regularizer has been widely used for designing RL algorithms \citep{williams1991function,peters2010relative,mnih2016asynchronous,duan2016benchmarking,haarnoja2018soft,hazan2019provably,xiao2019}. The theoretical performance of such designed algorithms have recently been characterized. The regularized MDP problem has been studied in \cite{nachum2017bridging}, in which the authors revealed the connection between the optimal value function and optimal policy, see also   
\cite{shani2020adaptive,agarwal2019theory,mei2020on,cen2020fast}. Linear convergence of policy gradient methods
for both regularized and general MDP problems has been recently established in \cite{LanPMD2021}.

\noindent\textbf{Nonconvex constrained optimization.} The studies that deal with the nonconvex functional constraints in \cref{cons_opt} appear to be scarce and only recently. In \cite{facchinei2021ghost,wang2017penalty}, the authors treat such a problem using penalty methods with first order oracles, where the convergence established is to stationary points. In \cite{boob2019stochastic} the problem is decomposed into a series of strongly convex constrained optimization subproblems by adding sequentially designed strongly convex regularizers. Convergence to stationary points was shown under certain conditions. Our study exploits the geometries that nonconvex functions can have to design an algorithm that achieves the global optimum. Note that existing analysis on convex constrained optimization cannot be applicable for establishing such a result (see Section 2 of \cite{boob2019stochastic} and references therein).

\section{Preliminaries}
\subsection{Markov Decision Process}
A Markov decision process (MDP) is determined by a five-tuple $(\mcs, \mca, \mathsf{P},r,\gamma)$, where $\mcs$ is the state space, $\mca$ is the action space, $\mathsf{P}$ is the transition kernel, $r$ is the reward function and $\gamma\in(0,1)$ is the discount factor. Assume that  $\mcs$ and  $\mca$ are finite with cardinality $|\mcs|$ and $|\mca|$, respectively. At any time $t\in \mathbb{N}_+$, an agent takes an action $a_t\in\mca$ at state $s_t\in\mcs$, and then the environment transits to the next state $s_{t+1}$ according to the distribution $ \mathsf{P}(s_{t}|s_{t-1}, a_{t-1})$. At the same time, the agent receives a reward of $r(s_t, a_t)$. We assume that the initial state $s_0$ follows a distribution $\rho$. The goal is to maximize the expected accumulated discounted reward: $\mE[\sum_{t=0}^{\infty}\gamma^tr(s_t, a_t)]$. 
A stationary policy maps a state $s\in\mcs$ to a distribution $\pi(\cdot|s)$ over $\mca$, which does not depend on time $t$. 
  
For a given policy $\pi$, we define its value function for any initial state $s\in\mcs$ as 
$
V_r^\pi(s)\coloneqq\mE[\sum_{t=0}^{\infty}\gamma^t r(s_t,a_t)|s_0=s, a_t\sim \pi(a_t|s_t),s_{t+1}\sim \mathsf{P}(\cdot|s_t,a_t)].
$
We further take the expectation with respect to the distribution of the initial state, and define the expected reward by following policy $\pi$ as $V_r^\pi(\rho) \coloneqq \mE_{s_0\sim\rho}[V_r^\pi(s_0)]$. We define the discounted state-action visitation distribution $\nu_\rho^{\pi}$ as follows:
$
    \nu_\rho^{\pi}(s,a) \coloneqq (1-\gamma)\sum_{t=0}^\infty \gamma^t\prob\big\{s_t =s, a_t= a\big| s_0 \sim \rho, a_t \sim \pi(\cdot|s_t), s_{t+1} \sim\mathsf{P}(\cdot|s_t, a_t)\big\},
$
for any $s\in\mcs$, $a\in\mca$.
The value function thus can be equivalently written as 
\begin{align*}
    V_r^\pi(\rho) =\frac{\sum_{s\in\mcs, a\in\mca}  \nu_\rho^{\pi}(s,a) r(s,a)}{1-\gamma} = \frac{\langle \nu_\rho^\pi, r\rangle_{\mcs\times\mca}}{1-\gamma},
\end{align*}
where $\langle \cdot, \cdot\rangle_{\mcs\times\mca}$ denotes the inner product over the space $\mcs\times\mca$ by reshaping $\nu_\rho^{\pi}$ and $r$ as  $|\mcs|\times|\mca|$-dimensional vectors, and we omit the subscripts and use $\langle\cdot,\cdot\rangle$ when there is no confusion.  

\subsection{Constrained MDP}\label{sec:cmdp}
The CMDP has exactly the same dynamic as the general MDP except the reward is an $(m+1)$-dimensional vector. Specifically, taking action $a\in\mca$ at state $s\in\mcs$, the agent receives a vector of rewards, ${r}(s,a) = [r_0(s,a), r_1(s,a), \ldots, r_m(s,a)]^\top$. 
For each reward function $r_i$, $i= 0, 1, \ldots, m$, we assume that it is positive and finite. We let $r_{i, \max } \coloneqq \max_{s\in\mcs, a\in\mca}\{r_i(s,a)\}$, for $i=0,1,...,m$,  and let $R_{\max} \coloneqq \sqrt{\sum_{i=1}^m r_{i,\max}^2}$.
We define the value function with respect to the $i$-th component of the reward vector ${r}$ as follows
$
V_{i}^\pi(s)\coloneqq\mE[\sum_{t=0}^{\infty}\gamma^t r_i(s_t,a_t)|s_0=s, a_t\sim \pi(a_t|s_t),s_{t+1}\sim \mathsf{P}(\cdot|s_t,a_t)],
$
for $i=0,1,...,m$. Similarly, we define $V^\pi_i(\rho)=\mE_{s_0\sim\rho}[V_i^\pi(s_0)]$.
The objective of the constrained MDP is to solve the following constrained optimization problem:
\begin{align}
    \max_{\pi\in\Pi} \quad & V_0^\pi(\rho)\nonumber \\
    s.t. \quad & V^\pi_i(\rho)\ge c_i, \quad i = 1, \ldots, m, \label{eq:constrainedrl}
\end{align}
where $\Pi=\big\{\pi\in\mR^{|\mcs||\mca|}: \sum_{a\in\mca} \pi(a|s) = 1, \pi(a|s)\geq 0,$ $\forall (s,a)\in\mcs\times\mca\big\}$ is the set of all stationary policies. 
In this paper, we focus on the case with direct parameterization of the policy. Let $\pi^*$ denote the policy that attains the optimum of the problem in \cref{eq:constrainedrl}.
We note that both the objective and constraint functions are nonconcave in $\pi$  \citep{bhandari2019global,agarwal2019theory}, which makes the problem challenging. 

Our goal is to solve the CMDP problem in \cref{eq:constrainedrl} and find an $\epsilon$-optimal policy defined as follows.
\begin{definition}\label{def:gap}
A policy $\tilde \pi$ is said to be $\epsilon$-optimal if its corresponding optimality gap and the constraint violation satisfy
\begin{align}
V_0^*(\rho)-V^{\tilde \pi}_0(\rho) \le\epsilon;  \text{and }  \|({c}-{V}^{\tilde \pi}(\rho))_+\|_1\le\epsilon,
\end{align}
where $V_0^*(\rho)$ is the optimal value of eq. \eqref{eq:constrainedrl}, ${V}^{\tilde \pi}(\rho):=[{V}_1^{\tilde \pi}(\rho),...,{V}_m^{\tilde \pi}(\rho)]^\top$ and ${c}:=[c_1,...,c_m]^\top$.
\end{definition}
We note that this is a stronger performance guarantee than the metric of the duality gap  in \citep{paternain2019constrained}.

\subsection{Notations}\label{sec:notation}
Given a vector ${x}\in\mR^m$, denote its $i$-th entry by ${x}_i$. Let $\|{ x}\|_1\coloneqq\sum_{i=1}^m |{x}_i|$, $\|{ x}\|_2 \coloneqq \sqrt{\sum_{i=1}^m{x}_i^2}$ and $\| x\|_\infty :=\max_{i} |x_i|$ denote the $\ell_1$, $\ell_2$ and $\ell_\infty$ norms respectively. Let $(\cdot)_+$ be the entry-wise rectified linear function, i.e., given $ {x}$, the $i$-th entry of $( {x})_+$ is $\max\{0, {x}_i\}$. Given two vectors $x$ and $y$, we use $\langle  x,  y\rangle$ or $ x^\top  y$ to denote the inner product. We say $ x\succeq  y$, if and only if $x_i\ge y_i$, for all $i=1,\ldots, m$. Let $\mathbf{1}$ be the $m$-dimensional all-one vector. Given two functions $f(x)$ and $g(x)$, $f(x) =\mathcal{O}(g(x))$ if there exists some constant $a>0$, such that$f(x)\le ag(x)$ for all $x$; and $f(x)=\tilde{\mathcal{O}}(g(x))$ if there exists some constant $a>0$ and $n\in\mathbb{N}$, such that $f(x)\le ag(x)\log^n(g(x))$ for all $x$. Let $[T]$ denote the set $\{1,2,...,T\}$.

\section{A Novel Primal-Dual Approach to CMDP}\label{sec:cmdp3}

\subsection{Existing Primal-Dual Approach}
\begin{algorithm}
	\caption{Existing Primal-Dual Approach}\label{alg:primaldual}
	\begin{algorithmic}[1]
	    \STATE \textbf{Input:} $\eta$.
		\STATE \textbf{Initialize:} $\lambda_0 = 0$, $\pi_0 =0$.
		\FOR {$t= 0, 1, ...,T$}
		\STATE Compute $\pi_{t+1}$ based on $\mcL(\pi, \lambda_t)$ and $\pi_t$ via a policy optimization method.
		\STATE Compute the dual ascent step $\lambda_{k+1} = (\lambda_k - \eta ( V^{\pi_{t+1}}(\rho) - c))_+$.
		\ENDFOR
	\end{algorithmic}
\end{algorithm}
A popular method to solve the CMDP problem in \cref{eq:constrainedrl} is the primal-dual approach. The idea is to solve the following minimax problem over a constructed Lagrangian function given by
\begin{align}
\min_{\lambda\in\mR^m_+}\max_{\pi\in\Pi}  \mcL(\pi, \lambda) &:= V_0^\pi(\rho)+\sum_{i=1}^m \lambda_i (V^\pi_i(\rho) -c_i) \nonumber\\&~= V_0^{\pi}(\rho) +\langle \lambda, V^\pi(\rho) -c\rangle,\label{eq:minmax}
\end{align}
where $\lambda=[\lambda_1,...,\lambda_m]^\top$ denotes the dual variable vector, $V^\pi(\rho) = [V^\pi_1(\rho), \ldots, V^\pi_m(\rho)]^\top$ denotes the vector of constraints and $c= [c_1, \ldots, c_m]^\top$ denotes the vector of constraint thresholds. 

The problem in \cref{eq:minmax} is commonly solved via a general algorithmic framework summarized in \Cref{alg:primaldual}, which alternatively updates the policy via a policy optimization RL algorithm and the dual variable $\lambda$ via a projected gradient descent. Under such a framework, existing primal-dual algorithms can have various designs. For example, in \citep{ding2020natural}, a natural policy gradient is adopted for the policy update, and in \citep{paternain2019constrained}, the RL algorithm for updating the policy is not specified, but each update needs to find an approximate solution to $\max_{\pi\in\Pi} \mcL(\pi, \lambda_t)$.

Recent theoretical studies have shown that the primal-dual approach in \Cref{alg:primaldual} achieves the convergence rate of $\mathcal{O}(1/\sqrt{T})$, which corresponds to the computational complexity of $\mathcal{O}({1}/{\epsilon^2})$ to either attain an $\epsilon$-accurate duality gap with a RL policy optimizer \citep{paternain2019constrained}, or to attain an $\epsilon$-accurate optimality gap and $\epsilon$-level constraint violation with a natural policy gradient algorithm \citep{ding2020natural}.

\subsection{AR-CPO: A New Primal-Dual Approach}\label{sec:arcpo}
\begin{algorithm}
	\caption{AR-CPO: A New Primal-Dual Approach}\label{alg:arcpo}
	\begin{algorithmic}[1]
		\STATE \textbf{Input:} $T$, stepsizes $\eta$, $\alpha$ and $q$, regularization parameters $\tau$ and $\mu$,  accuracy $\delta$, and projection radius $B$
		\STATE \textbf{Initialize:} $\lambda_{0} = 0$, $\bar \lambda_0 = 0$
		\FOR {$t= 1, ..., T$}
		\STATE $\underline \lambda_t \leftarrow (1-q)\bar\lambda_{t-1} + q\lambda_{t-1}$
		\STATE $\pi_t \leftarrow \mathrm{RegPO}(\delta, \underline\lambda_t, \tau)$
		\STATE $\widehat \nabla d_{\tau,\mu}(\underline\lambda_t) \leftarrow {V}^{\pi_t}(\rho) - c + \mu\underline\lambda_t$
		\STATE $\lambda_t \leftarrow \argmin_{\lambda \in \Lambda}\big\{\eta[\langle  \widehat\nabla d_{\tau,\mu}(\underline\lambda_t),\lambda \rangle+\frac{\mu}{2}\|\lambda - \underline \lambda_t\|_2^2] + \frac{1}{2} \|\lambda -\lambda_{t-1}\|^2_2\big\}$,  \[\qquad\textrm{where }\Lambda =\{\lambda\in\mR_+^m: \lambda_i \le 2B, i=1, \dots, m\}\]
		\STATE $\bar\lambda_t \leftarrow (1-\alpha)\bar\lambda_{t-1} + \alpha\lambda_t$
		\ENDFOR
		\STATE \textbf{Output:} $\tilde \pi$, where for any given $a\in\mca$ and $s\in\mcs$ $\tilde \pi$ is defined as 
			\begin{align*}
				\tilde{\pi} (a|s) = \tfrac{(1-\alpha)^{T-1} \nu_\rho^{\pi_1}(s,a) + \sum\nolimits_{t=2}^T \alpha(1-\alpha)^{T-t} \nu_\rho^{\pi_{t}}(s,a)}{(1-\alpha)^{T-1} \chi^{\pi_1}_\rho(s) + \sum\nolimits_{t=2}^T \alpha(1-\alpha)^{T-t} \chi_\rho^{\pi_{t}}(s)},
			\end{align*}
		where $\chi_\rho^\pi (s) = \tsum_{a^\prime\in\mca} \nu_\rho^\pi(s, a^\prime)$.
	\end{algorithmic}
\end{algorithm}

One prominent drawback of the existing primal-dual approach in \Cref{alg:primaldual} is that the overall complexity is dominated by the minimization over the dual function, namely $\min_{\lambda} d(\lambda):=\max_{\pi\in\Pi}  \mcL(\pi, \lambda).$
Since $d(\lambda)$ is nonsmooth due to the maximization over $\pi$, the (projected) gradient descent over $\lambda$ decays only at the rate of $\mathcal{O}(1/\sqrt{T})$, although the RL policy optimizer typically achieves a much faster rate of $\mathcal{O}(1/T)$. Hence, the overall  complexity  is only $\mathcal{O}({1}/{\epsilon^2})$. 

In this paper, we propose a novel primal-dual approach, which we call as Accelerated and Regularized Constrained Policy Optimizer (AR-CPO), and is summarized in \Cref{alg:arcpo}. The central idea is to solve the minimax problem over the $(\tau, \mu)$-regularized Lagrangian via an accelerated dual descent:
\begin{align}\label{eq:minmaxreg}
\min_{\lambda\in\mR^m_+}\max_{\pi\in\Pi} \mcL_{\tau, \mu}(\pi, \lambda) \coloneqq \mcL(\pi, \lambda)+\tau\mathcal H(\pi)+\frac{\mu}{2}\|\lambda\|_2^2, 
\end{align}
where $ \mcH(\pi) = -\mE\big[\sum_{t=0}^{\infty}\gamma^t\log(\pi(a_t|s_t)) \big|s_0=s, a_t\sim \pi(a_t|s_t),s_{t+1}\sim \mathsf{P}(\cdot|s_t,a_t)\big]$ is the discounted entropy of the policy $\pi$, and $\tau,\mu\geq 0$ are regularization constants.  

Compared to the existing primal-dual approach in \Cref{alg:primaldual}, AR-CPO consists of three new components, which collectively yield the improved complexity of $\mathcal{O}({1}/{\epsilon})$.

\textbf{(a) Entropy-regularized policy optimizer.} The entropy regularizer $\tau\mcH(\pi)$ in \cref{eq:minmaxreg} plays a critical role to smooth the dual function defined as $d_{\tau, \mu}(\lambda) := \max_{\pi\in\Pi} \mcL_{\tau, \mu}(\pi, \lambda)$, which is shown in \Cref{prop:smoothness} in \Cref{proof:thm1}. Thus, the dual problem $\min_{\lambda\in\mR^m_+} d_{\tau, \mu}(\lambda)$ becomes minimization over a smooth objective, which (together with items (b) and (c) to be introduced) can enjoy a better convergence rate and an enhanced complexity order.
 
To obtain the dual function, i.e., to solve $\max_{\pi\in\Pi} \mcL_{\tau, \mu}(\pi, \lambda)$ for any given $\lambda$, it can be observed that such a problem is equivalent to the maximization of an entropy-regularized value function corresponding to a reward function $r_\lambda\coloneqq r_0 +\sum_{i=1}^m \lambda_i r_i$, which is a linear combination of the rewards. Therefore, many entropy-regularized policy optimizers can be applied here, and in \Cref{alg:arcpo}, $\text{RegPO}(\delta, \lambda, \tau)$ represents such a generic optimizer that learns an optimal policy for the entropy-regularized MDP with reward $r_\lambda$ up to $\delta$-accuracy. We give two example RegPOs in \Cref{app:regpo}: RegPO-NPG based on natural policy gradient method and RegPO-SoftQ based on soft Q-learning.  

In this paper, we focus on the deterministic case and assume that RegPO has the access to an MDP oracle that provides the expected value for an update, such as exact policy gradient in RegPO-NPG or soft Bellman operator in RegPO-SoftQ. Thus, the computational complexity is measured by the number of queries of such an oracle. Due to the regularization of MDP, each call of the optimizer RegPO $(\delta,\lambda,\tau)$ requires  
$\mathcal{O}(\log(1/(\delta\tau))) = \mathcal{O}(\log({1}/{\epsilon}))$
queries of the MDP oracle (see \Cref{app:regpo} for more details). 

\textbf{(b) $\ell_2$ regularization on $\lambda$.} We further introduce an $\ell_2$-regularizer $\frac{\mu}{2}\|\lambda\|_2^2$ on the dual variable $\lambda$ in \cref{eq:minmaxreg} so that the dual function $d_{\tau, \mu}(\lambda)$ becomes a strongly convex function with respect to $\lambda$. This design, together with the following Nesterov's accelerated gradient method for updating $\lambda$, will yield the enhanced complexity order.

\textbf{(c) Nesterov’s accelerated gradient descent dual optimizer.} We further adopt a variant of Nesterov’s accelerated gradient descent method, which is discussed in \cite[Section 3.3]{lan2020first}, for the update of the dual variable $\lambda$. In contrast to the gradient descent method, such an acceleration method builds up a lower approximation $d_{\tau,\mu}(\underline\lambda_t)+\langle \nabla d_{\tau,\mu}(\underline\lambda_t), \lambda - \underline\lambda_t \rangle + \tfrac{\mu}{2}\|\underline\lambda_t-\lambda\|_2^2$ of the objective function at the search point ${\underline \lambda_t}$, which is a weighted average of iterates $\{\lambda_t\}$. This approach will improve the dependence on the condition number so that the convergence rate is $\left(1-\sqrt{\mu/L_d}\right)^T$, where $L_d$ is the smoothness parameter of the dual function (will be formally defined later in \Cref{thm1}). As will be shown in \Cref{sec:result}, if we choose the regularization constant as $\mu,\tau=\mathcal{O}(\epsilon)$ so that $L_d=\mathcal{O}({1}/{\epsilon})$, then $T=\tilde{\mathcal{O}}({1}/{\epsilon})$ iterations yield an $\epsilon$-accurate optimality gap and $\epsilon$-level violation. Thus, we achieve 
an overall complexity of $\tilde{\mathcal{O}}({1}/{\epsilon})$, which improves the existing primal-dual approach by a factor of $\tilde{\mathcal{O}}({1}/{\epsilon})$. Note that without the acceleration, the vanilla gradient decent update for $\lambda$ does not yield the improved complexity order.
 

\section{Convergence Rate and Optimality}\label{sec:result}
In this section, we  present the global convergence and complexity of Algorithm \ref{alg:arcpo}.

\subsection{Main Results}\label{sec:mainresult}

We first state our technical assumptions that are commonly used in the literature of optimization and reinforcement learning theory.
\begin{assump}[Slater Condition]\label{assumption:slatercondition}
There exists a constant $\xi\in\mR_+$, and at least one policy $\pi_\xi\in\Pi$, such that for all $i= 1,\ldots , m,$ $V_{i}^{\pi_\xi}\ge c_i +\xi$.
\end{assump}
The Slater condition is easy to satisfy in practice, as we usually has one policy that is strictly feasible, i.e., all the constraints are satisfied strictly.
 
\begin{assump}[Uniform Ergodicity]\label{assumption:ergodicity} For any $ \lambda\in\Lambda$, the Markov chain induced by the policy $\pi^*_{\tau,\lambda}$ and the Markov transition kernel is  uniformly ergodic, i.e., 
there exist constants $C_M>0$ and $0<\beta<1$ such that for all $t\ge 0$, 	
	\begin{equation*}
	\sup_{s\in \mcs}\dtv\left(\mathbb{P}(s_t\in\cdot|s_0 =s), \chi_{\pi^*_{\tau,\lambda}}\right) \le C_M \beta^{t},
	\end{equation*}
	where  $\pi^*_{\tau,\lambda}$ is the solution to $\max_{\pi\in\Pi} \mcL_{\tau, \mu}(\pi, \lambda)$, $\chi_{\pi^*_{\tau,\lambda}}$ is the stationary distribution of the MDP induced by policy $\pi^*_{\tau,\lambda}$, and $\dtv\left(\cdot, \cdot\right)$ is the total variation distance. 
\end{assump}
Note that a Markov chain is uniformly ergodic if it is irreducible (i.e., possibly gets to any state from any state) and aperiodic \citep{levin2017markov}. As for any $\lambda$, $\pi^*_{\tau,\lambda}$ is the optimal policy for the entropy-regularized MDP with the reward $r_\lambda$, and is therefore a softmax policy \citep{nachum2017bridging}, which takes any action with non-zero probability. Therefore, Assumption \ref{assumption:ergodicity} can be easily satisfied in practice. 

The following theorem characterizes the global convergence of Algorithm \ref{alg:arcpo} in terms of the optimality gap $V_0^*(\rho)-V_0^{\tilde{\pi}}(\rho)$ and the constraint violation $({c}-{V}^{\tilde{\pi}}(\rho))_+$. The complete proof is provided in Appendix \ref{proof:thm1}. 
	\begin{thm}\label{thm1}
Consider AR-CPO in \Cref{alg:arcpo}. Suppose \Cref{assumption:slatercondition,assumption:ergodicity} hold and we choose
	    \begin{align}\label{stepsize}
		\alpha = \sqrt{\tfrac{\mu}{2L_d}},~q=\tfrac{2\alpha-\mu/L_d}{2-\mu/L_d}, ~\text{and}~\eta=\tfrac{\alpha}{\mu (1-\alpha)}, 
		\end{align}
where $L_d:= \frac{2R_{\max}^2L_\nu}{(1-\gamma)^2\tau} +\mu$ is the smoothness parameter of the dual function, and $L_{\nu}\coloneqq \lceil \log_{\beta}(C_M^{-1})\rceil + (1-\beta)^{-1}+1$. Then we have the following convergence guarantee of the optimality gap and the constraint violation: 
		\begin{align*}
&\quad V_0^*(\rho)-V^{\tilde \pi}_0(\rho) \\
&\leq  \zeta_T\left(\tfrac{2R_{\max}}{1-\gamma} + \tfrac{2\sqrt{m}B}{\eta}\right)+ \sqrt{\tfrac{2L_d}{\mu}}\max_{t\in[T]} |\Delta_{t}|\\
		& \quad+\tfrac{\tau\log |\mca|}{1-\gamma}  +6\mu m B^2,  \tag{\em optimality gap}
		\end{align*}
		\begin{align*}
		&\quad\|({c}-{V}^{\tilde \pi}(\rho))_+\|_1 \\
		&\leq \zeta_T \left(\tfrac{2R_{\max}}{(1-\gamma)B} + \tfrac{2\sqrt{m}}{\eta}\right)+ \sqrt{\tfrac{2L_d}{\mu}}\tfrac{\max_{t\in[T] } |\Delta_{t}|}{B}\\
		&\quad +\tfrac{\tau \log |\mca|}{(1-\gamma)B} +6\mu m B + 2\left(1-\sqrt{\tfrac{\mu}{2L_d}}\right)^{T-1}\tfrac{\sqrt{m}R_{\max}}{1-\gamma},\\
		& \tag{\em constraint violation} 
		\end{align*}
		where
		\begin{align*}
		\zeta_T :&= 2\left(1-\sqrt{\tfrac{\mu}{2L_d}}\right)^{\frac{T}{2}} \sqrt{\eta K_0(\lambda^*_{\tau,\mu})}\\ &\quad + \sqrt{8\eta\sqrt{m}B \max_{t\in[T] }\|\delta_t\|_2},
		\end{align*}
		$\lambda_{\tau,\mu}^*:=\argmin_{\lambda\in\mR^m_+} d_{\tau, \mu}(\lambda)$, $\Delta_t := V_0^{\pi^*_{\tau,\underline \lambda_{t}}}(\rho)  + \underline \lambda_{t}^\top({V}^{\pi^*_{\tau,\underline \lambda_{t}}}(\rho) - c) - [V_0^{\pi_{t}}(\rho) + \underline \lambda_{t}^\top ({V}^{\pi_t}(\rho) - c)]$, $B=\tfrac{r_{0,\max}}{(1-\gamma)\xi}$,  $K_0(\lambda) \coloneqq d_{\tau, \mu}(\bar \lambda_0)-d_{\tau, \mu}(\lambda)+\tfrac{\alpha}{2}(\mu+\tfrac{1}{\eta})\|\lambda_0-\lambda\|_2^2$, and $\delta_t \coloneqq\widehat \nabla d_{\tau,\mu}(\underline \lambda_t)-\nabla d_{\tau,\mu}(\underline \lambda_t)$.
	\end{thm}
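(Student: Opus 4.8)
The plan is to recast the AR-CPO dynamics as an accelerated minimization of the smooth, strongly convex regularized dual $d_{\tau,\mu}(\lambda)=\max_{\pi\in\Pi}\mcL_{\tau,\mu}(\pi,\lambda)$, and then translate the resulting dual-objective guarantee into the two claimed primal bounds. First I would establish the two structural properties that make acceleration possible. Strong convexity with parameter $\mu$ is immediate: the map $\lambda\mapsto\max_\pi[\mcL(\pi,\lambda)+\tau\mcH(\pi)]$ is a pointwise maximum of affine functions of $\lambda$, hence convex, and adding $\frac{\mu}{2}\|\lambda\|_2^2$ makes it $\mu$-strongly convex. By an envelope (Danskin) argument, using that the entropy term $\tau\mcH(\pi)$ renders the inner problem strictly concave in the occupancy measure and thus admitting a unique maximizer $\pi^*_{\tau,\lambda}$, the gradient is $\nabla d_{\tau,\mu}(\lambda)=V^{\pi^*_{\tau,\lambda}}(\rho)-c+\mu\lambda$. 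The smoothness parameter $L_d$ is the delicate part, and I would invoke \Cref{prop:smoothness}: it suffices to show $\lambda\mapsto V^{\pi^*_{\tau,\lambda}}(\rho)$ is $\frac{2R_{\max}^2 L_\nu}{(1-\gamma)^2\tau}$-Lipschitz, which follows by combining Mitrophanov's perturbation bound (visitation measure Lipschitz in the policy) with the Lipschitz dependence of the entropy-regularized optimal policy on its reward $r_\lambda=r_0+\sum_i\lambda_i r_i$, and hence on $\lambda$.

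Next I would carry out the accelerated gradient analysis for the inexact dynamics. The search point $\underline\lambda_t$, the proximal update defining $\lambda_t$, and the averaging $\bar\lambda_t$ in \Cref{alg:arcpo} implement a variant of Nesterov's method with the prescribed stepsizes $\alpha,q,\eta$. Because the algorithm uses the inexact gradient $\widehat\nabla d_{\tau,\mu}(\underline\lambda_t)$ in place of $\nabla d_{\tau,\mu}(\underline\lambda_t)$, I would run a perturbed one-step descent inequality, tracking a potential built from $d_{\tau,\mu}(\bar\lambda_t)-d_{\tau,\mu}(\lambda^*_{\tau,\mu})$ together with $\|\lambda_t-\lambda^*_{\tau,\mu}\|_2^2$. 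Telescoping over $t=1,\dots,T$ yields the linear contraction factor $(1-\sqrt{\mu/(2L_d)})$ per step, plus an accumulated error governed by $\max_t\|\delta_t\|_2$; this is the mechanism producing the quantity $\zeta_T$ as well as the term $\sqrt{2L_d/\mu}\,\max_t|\Delta_t|$.

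The crux is the final translation into the primal quantities. The key structural fact is that $\tilde\pi$ in \Cref{alg:arcpo} is defined so that $\nu_\rho^{\tilde\pi}=(1-\alpha)^{T-1}\nu_\rho^{\pi_1}+\sum_{t=2}^T\alpha(1-\alpha)^{T-t}\nu_\rho^{\pi_t}$, and since each $V_i^{\pi}(\rho)=\langle\nu_\rho^\pi,r_i\rangle/(1-\gamma)$ is linear in the occupancy measure, $V_i^{\tilde\pi}(\rho)$ equals the same convex combination of the $V_i^{\pi_t}(\rho)$. For the optimality gap I would use weak duality, $V_0^*(\rho)\le d_{\tau,\mu}(\lambda)$ for a suitable $\lambda$, correcting for the regularization bias so that the additive terms $\frac{\tau\log|\mca|}{1-\gamma}$ and $6\mu m B^2$ emerge, and then substitute the dual convergence from the previous step. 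For the constraint violation I would exploit the first-order optimality condition of the $\lambda_t$-update over the box $\Lambda$: testing it against extreme choices of $\lambda$ (e.g.\ $\lambda=0$ and coordinatewise $\lambda=2B$) converts the averaged dual progress into a bound on $\|(c-V^{\tilde\pi}(\rho))_+\|_1$, with the residual decaying term $2(1-\sqrt{\mu/(2L_d)})^{T-1}\frac{\sqrt{m}R_{\max}}{1-\gamma}$ originating from the initialization $\lambda_0=0$.

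I expect the main obstacle to be precisely this last translation: the accelerated scheme only directly controls the dual objective gap, yet the theorem demands simultaneous control of the primal optimality gap \emph{and} the constraint violation, each with the regularization biases appearing cleanly as additive $\tau$- and $\mu$-terms. Keeping the two inexactness sources $\delta_t$ (gradient error) and $\Delta_t$ (Lagrangian suboptimality of $\pi_t$) consistent across both the acceleration recursion and the duality argument, while separating the optimality-gap and constraint-violation contributions through the box-constraint optimality condition, is where the real work lies.
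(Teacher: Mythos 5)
Your high-level architecture coincides with the paper's: smoothness of $d_{\tau,\mu}$ via Danskin plus Mitrophanov's bound (\Cref{prop:smoothness}), an inexact accelerated analysis of the dual updates, the mixture-policy identity for $\tilde\pi$, and extraction of the two bounds by testing a primal--dual inequality at $\lambda=0$ and at $\lambda_i=2B$ on violated coordinates. However, there is a genuine gap, and it is exactly the one you flag in your final paragraph as ``where the real work lies'': your plan never supplies the bridge between the dual analysis and the primal guarantees. As written, your Step 2 runs the standard inexact Nesterov potential argument to control the dual objective gap $d_{\tau,\mu}(\bar\lambda_t)-d_{\tau,\mu}(\lambda^*_{\tau,\mu})$. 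The paper's proof never uses the dual gap as the intermediate quantity; the bridge is instead the weighted sum of consecutive dual-iterate distances $\sum_{t=1}^T\alpha(1-\alpha)^{T-t}\|\lambda_t-\lambda_{t-1}\|_2^2$. Concretely, \Cref{prop:keyconvergenceprop} reworks the acceleration recursion (with the same potential $K_t(\lambda)$ you propose) so that the terms $\tfrac{\alpha}{4\eta}\|\lambda_t-\lambda_{t-1}\|_2^2$ --- which the usual Nesterov analysis simply discards --- are retained on the left-hand side and inherit the linear contraction; and \Cref{prop:connection} produces a primal--dual inequality in which exactly this weighted sum (after Cauchy--Schwarz) multiplies $\tfrac{2R_{\max}}{1-\gamma}+\tfrac{2\sqrt m B}{\eta}$. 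That inequality comes from a recursion on $d_\tau(\underline\lambda_t)$ driven by the interpolation step $\underline\lambda_t=(1-q)\bar\lambda_{t-1}+q\lambda_{t-1}$ and the definition of $\Delta_t$; unrolling it is what reproduces the mixture weights $(1-\alpha)^{T-1},\,\alpha(1-\alpha)^{T-t}$ defining $\tilde\pi$, after which $d_\tau(\underline\lambda_T)$ is lower-bounded by $V_0^*(\rho)-\tfrac{\tau\log|\mca|}{1-\gamma}$ (zero duality gap) and the first-order optimality condition of step 7 injects the $\|\lambda_t-\lambda_{t-1}\|_2$ terms. This is also why $\sqrt{2L_d/\mu}\,\max_t|\Delta_t|$ appears: it arises as $\sum_t(1-\alpha)^{T-t}\Delta_t\le\max_t|\Delta_t|/\alpha$ inside this recursion, not from the acceleration potential as you assert.

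The distinction is not cosmetic. A dual-gap-only route would degrade the rate: $\mu$-strong convexity converts a dual gap $\varepsilon_d$ only into $\|\bar\lambda_T-\lambda^*_{\tau,\mu}\|_2\le\sqrt{2\varepsilon_d/\mu}$, and turning closeness of dual variables into closeness of values requires the Lipschitz constant of $\lambda\mapsto V^{\pi^*_{\tau,\lambda}}(\rho)$, which is of order $L_d=\Theta(1/\tau)=\Theta(1/\epsilon)$; the resulting bound would not have the stated form and would forfeit the $\tilde{\mathcal O}(1/\epsilon)$ complexity that is the point of the theorem. A secondary (fixable) inaccuracy: the identity $\nu_\rho^{\tilde\pi}=(1-\alpha)^{T-1}\nu_\rho^{\pi_1}+\sum_{t\ge2}\alpha(1-\alpha)^{T-t}\nu_\rho^{\pi_t}$ is not true ``by definition'' --- $\tilde\pi$ is defined only as a ratio of mixed visitation measures, and \Cref{prop:tildepi} proves the identity by showing $\chi_\rho^{\tilde\pi}$ and the mixed state-visitation measure are both fixed points of the same contractive Bellman-type operator. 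So while your skeleton and several ingredients are correct, the proposal as it stands stops short of the two propositions that actually constitute the proof.
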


\Cref{thm1} indicates that the optimality gap is bounded by an exponential decaying term with the convergence rate $\left(1-\sqrt{{\mu}/({2L_d})}\right)^T$. This is a faster convergence due to the acceleration of the dual variable than the convergence of a simple gradient descent, whose convergence rate is $\left(1-{\mu}/{L_d}\right)^T$. Furthermore, the parameter $L_d$ of the dual function plays an important role here, because a smoother dual function with a smaller $L_d$ yields a faster convergence. The bound also contains other terms captured by the regularization parameters $\tau$ and $\mu$, and the convergence errors $\max|\Delta_t|$ and $\max\|\delta_t\|_2$ of the entropy-regularized policy optimizer RegPO. All these terms can be controlled to be below any required accuracy $\epsilon$. The constraint violation is also bounded via the terms that have the same nature. 

In the above theorem, the optimality gap and constraint violation are bounded in terms of the regularization parameters, the number of iterations $T$, the convergence errors of RegPO, which can be jointly chosen properly to guarantee the required $\epsilon$-accurate optimality gap and $\epsilon$-level constraint violation. In the following corollary, we present the overall complexity to converge to such an $\epsilon$-optimal policy. Its proof is given in \Cref{sec:proofofcorollary1}.
\begin{corlry}\label{corollary:totalcomplexity}
With a total computational complexity of $\tilde{\mathcal{O}}({1}/[{(1-\gamma)^{3.5}\epsilon}])$, Algorithm \ref{alg:arcpo} outputs an $\epsilon$-optimal policy with respect to both the optimality gap and constraint violation. 
\end{corlry}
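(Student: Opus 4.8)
The plan is to derive the corollary directly from the finite-time bounds of \Cref{thm1} by choosing the regularization parameters $\tau,\mu$, the number of outer iterations $T$, and the RegPO accuracy $\delta$ so that every summand on the right-hand side of both the optimality-gap and constraint-violation bounds is of order $\epsilon$, and then multiplying the number of outer iterations by the per-call cost of RegPO. First I would dispose of the two \emph{bias} terms that depend on neither $T$ nor the inner accuracy, namely $\tfrac{\tau\log|\mca|}{1-\gamma}$ and $6\mu m B^2$ in the optimality gap (and their analogues in the constraint violation). Since $B=\tfrac{r_{0,\max}}{(1-\gamma)\xi}=\Theta(1/(1-\gamma))$, forcing these to be $\mathcal O(\epsilon)$ fixes the scalings $\tau=\Theta(\epsilon(1-\gamma))$ and $\mu=\Theta(\epsilon(1-\gamma)^2)$. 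Substituting $\tau$ into $L_d=\tfrac{2R_{\max}^2 L_\nu}{(1-\gamma)^2\tau}+\mu$ then pins down $L_d=\Theta\!\big(1/((1-\gamma)^3\epsilon)\big)$, and hence the acceleration rate $\sqrt{\mu/(2L_d)}=\Theta(\epsilon(1-\gamma)^{5/2})$, together with $\alpha=\Theta(\epsilon(1-\gamma)^{5/2})$, $\eta=\tfrac{\alpha}{\mu(1-\alpha)}=\Theta((1-\gamma)^{1/2})$, and $q$ as prescribed by \eqref{stepsize}.

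Next I would choose $T$. The exponentially decaying contributions are the first summand of $\zeta_T$, which carries the factor $(1-\sqrt{\mu/(2L_d)})^{T/2}$, and the residual term $2(1-\sqrt{\mu/(2L_d)})^{T-1}\tfrac{\sqrt m R_{\max}}{1-\gamma}$ in the constraint violation. Their coefficients, e.g.\ $\tfrac{2R_{\max}}{1-\gamma}+\tfrac{2\sqrt m B}{\eta}=\Theta((1-\gamma)^{-3/2})$ and $\sqrt{\eta K_0(\lambda^*_{\tau,\mu})}$, are each polynomial in $1/(1-\gamma)$, so requiring the products to fall below $\epsilon$ yields $T=\tilde{\mathcal O}\!\big(\sqrt{L_d/\mu}\big)=\tilde{\mathcal O}\!\big(1/[(1-\gamma)^{5/2}\epsilon]\big)$, where the $\tilde{\mathcal O}$ absorbs the $\log(1/(\epsilon(1-\gamma)))$ produced when inverting the geometric decay.

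Finally I would fix the inner accuracy. The remaining data-dependent terms are $\sqrt{2L_d/\mu}\,\max_t|\Delta_t|$ and the second summand of $\zeta_T$, of order $\sqrt{\eta\sqrt m B\,\max_t\|\delta_t\|_2}$; setting $\max_t|\Delta_t|$ and $\max_t\|\delta_t\|_2$ polynomially small in $\epsilon$ and $(1-\gamma)$ makes both $\mathcal O(\epsilon)$, and because RegPO converges linearly this requires only $\mathcal O(\log(1/(\delta\tau)))$ inner iterations, i.e.\ (after accounting for the effective-horizon factor analyzed in \Cref{app:regpo}) $\tilde{\mathcal O}(1/(1-\gamma))$ oracle queries per outer step. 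Multiplying this per-call cost by $T$ then produces the claimed total complexity $\tilde{\mathcal O}\!\big(1/[(1-\gamma)^{7/2}\epsilon]\big)=\tilde{\mathcal O}\!\big(1/[(1-\gamma)^{3.5}\epsilon]\big)$.

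I expect the main obstacle to be the bookkeeping of the coupled $(1-\gamma)$ dependencies rather than any single estimate: because $L_d$ depends on $\tau$, and $\alpha,\eta,q$ — and through them every coefficient of $\zeta_T$ and the constant $K_0$ — depend in turn on $\mu$ and $L_d$, one must choose the joint $\epsilon$- and $(1-\gamma)$-scalings of $\tau$ and $\mu$ so that \emph{all} terms are simultaneously $\mathcal O(\epsilon)$ while keeping $T=\tilde{\mathcal O}(\sqrt{L_d/\mu})$ as small as possible, and then propagate all the fractional powers of $(1-\gamma)$ (from $B$, $\eta$, the $R_{\max}/(1-\gamma)$ factors, and the per-call horizon cost) correctly into the final product.
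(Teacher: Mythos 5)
Your proposal is correct and follows essentially the same route as the paper's proof: fix $\tau,\mu$ at order $\epsilon$ times the appropriate powers of $(1-\gamma)$ so the bias terms in \Cref{thm1} are $\mathcal O(\epsilon)$, take $T=\tilde{\mathcal O}(\sqrt{L_d/\mu})$ to kill the geometric terms, make the RegPO accuracy $\delta$ polynomially small (which costs only logarithmically many inner iterations by linear convergence), and multiply $T$ by the per-call cost. Your explicit $(1-\gamma)$ bookkeeping --- $\tau=\Theta(\epsilon(1-\gamma))$, $\mu=\Theta(\epsilon(1-\gamma)^2)$, hence $T=\tilde{\mathcal O}\big(1/[(1-\gamma)^{2.5}\epsilon]\big)$ and $K=\tilde{\mathcal O}(1/(1-\gamma))$ per outer step --- is precisely what reproduces the stated $(1-\gamma)^{3.5}$ exponent, a dependence the paper's written proof asserts but leaves largely implicit (its stated choice $\tau\le\epsilon/\log|\mca|$ and its final count $KT=\mathcal O\big(\tfrac{1}{\epsilon}\log^2\tfrac{1}{\epsilon}\big)$ track only the $\epsilon$-dependence).
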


\section{Generalization to Constrained Nonconvex Optimization with zero duality gap}\label{sec:generalsetting}
In this section, we extend our approach to accelerate a class of generic 
constrained nonconvex optimization problems. Our approach is the first that can accelerate convergence of constrained nonconvex optimization to global optimum under certain geometries. 

Recall the problem defined in \cref{cons_opt}. Consider the dual function defined below:
\begin{align*}
d(\lambda) := \min_{x\in X} \mcL(x, \lambda),\quad \text{and} \quad d^*:=\max_{\lambda \in \mR_+^m}d(\lambda),
\end{align*}
where $\mcL(x, \lambda) := f_0(x)+\sum_{i=1}^m \lambda_i f_i(x)$ is the Lagrangian. Let $\lambda:=[\lambda_1,...,\lambda_m]^\top$ denote the dual variable vector, and let $f(x):=[f_1(x),...,f_m(x)]^\top$ denote the constraints. 

As suggested in our AR-CPO algorithm, we further introduce an $\ell_2$ perturbation denoted by $d_\mu(\lambda) := d(\lambda) - \tfrac{\mu}{2}\|\lambda\|^2$ and $\lambda_\mu^* := \argmax_{\lambda \in \mR_+^m}d_\mu(\lambda)$. We consider a class of constrained nonconvex optimization problem satisfying the following assumptions on the geometry.

\begin{assump}[Strong Duality]\label{assum:strongdual}
Strong duality holds for \cref{cons_opt}, i.e. $f_0^*=d^*$.
\end{assump}

\begin{assump}[Dual Function Smoothness]\label{assum:dualsmooth}
	$d(\lambda)$ is $L$-smooth: $~\forall\lambda, \lambda^\prime\in\mR_+^m,$ 
	\begin{align*}
	\|\nabla d(\lambda) - \nabla d(\lambda^\prime) \|_2 = \|f(x_\lambda^*)-f(x_{\lambda'}^*) \|_2 \le L \|\lambda - \lambda^\prime\|_2,
	\end{align*}
	where $x_\lambda^* :=\argmin_{x\in X} \mcL(x, \lambda)$ and $x_{\lambda'}^* :=\argmin_{x\in X} \mcL(x, \lambda')$. Consequently, $d_\mu(\lambda)$ is $\tilde L$-smooth, where $\tilde L:=L+\mu$.
\end{assump}
\Cref{assum:dualsmooth} requires that the Lagrangian has a unique solution in terms of $x$ when $\lambda$ is fixed. For a general nonconvex problem, \Cref{assum:dualsmooth} can be guaranteed if the Lagrangian satisfies the gradient dominance condition.

\begin{assump}[Slater's Condition]\label{assum:slatercondition}
	There exists a constant $\xi\in\mR_+$, and at least one  $x_\xi\in X$, such that for all $i= 1,\ldots , m,$ $f_{i}(x_\xi)\le  -\xi$.
\end{assump}

Given the assumptions above, we propose the Accelerated and Regularized Constrained Optimizer (AR-CO), which is an extension of AR-CPO to general constrained nonconvex optimization problems. The AR-CO shares the same structure as the AR-CPO in \Cref{alg:arcpo}, and the difference lies in the update of nonconvex variable $x$ and how to generate a final output when given $x_1, \ldots, x_T$. Specifically, the AR-CO algorithm replaces the steps $5$, $6$ and $7$ in \Cref{alg:arcpo} with the following steps $5^\prime$, $6^\prime$ and $7^\prime$.
\begin{align*}
    x_t &\leftarrow \mathrm{LagOPT}(\delta, \underline\lambda_t)\tag{step $5^\prime$},
\end{align*}
where LagOPT denotes a nonconvex optimizer that can solve the Lagrangian to any prescribed accuracy $\delta>0$ under parameter $\underline\lambda_t$, which can be guaranteed if the Lagrangian satisfies the gradient dominance condition.
\begin{align*}
    \widehat \nabla d(\underline\lambda_t) \leftarrow f(x_t)  - \mu\underline\lambda_t \tag{step $6^\prime$}.
\end{align*}
The step $7^\prime$ is the same as step $7$ except replacing $\widehat \nabla d(\underline\lambda_t)$ with $-\widehat \nabla d(\underline\lambda_t)$ instead, since the step is ascent here. And also $B\coloneqq {G}/{\xi}$. For the output, we randomly pick $\tilde{x} =x_1$ with probability $(1-\alpha)^{T-1}$ and $\tilde{x}=x_i$ with probability $ \alpha(1-\alpha)^{T-i}$ for $i=2, \ldots, T$.

The following theorem characterizes the global convergence of AR-CO. 
\begin{thm}\label{thm2}
Consider the AR-CO algorithm. Suppose \Cref{assum:dualsmooth,assum:strongdual,assum:slatercondition} hold and we choose $\alpha = \sqrt{\tfrac{\mu}{2\tilde L}}$, $q=\tfrac{2\alpha-\mu/\tilde L}{2-\mu/\tilde L}$, and $\eta=\tfrac{\alpha}{\mu (1-\alpha)}$.
Then we have the following convergence guarantee of the optimality gap and the constraint violation: 
		\begin{align*}
		\mathbb{E}[f_0(\tilde x)]-f_0^* \leq~& \tilde \zeta_T \left(2\sqrt{m} G + \tfrac{2\sqrt{m}G}{\eta \xi}\right)\\
		&~+ \sqrt{\tfrac{2\tilde L}{\mu}}\max_{t\in[T]} |\Delta_{t}|  + \tfrac{6\mu mG^2}{\xi^2},  \tag{\em optimality gap}
		\end{align*}
		\begin{align*}
		\|\mathbb{E}[f(\tilde x)]_+\|_1 \leq&~ \tilde \zeta_T \left(2\sqrt{m} \xi + \tfrac{2\sqrt{m}}{\eta}\right)+ \sqrt{\tfrac{2\tilde L}{\mu}}\tfrac{\max_{t\in[T]} |\Delta_{t}|}{B}\\
		& ~ +\tfrac{6\mu mG}{\xi} + 2m G\left(1-\sqrt{\tfrac{\mu}{2\tilde L}}\right)^{T-1},
		 \tag{\em constraint violation} 
		\end{align*}
		where 
		\begin{align*}
		\tilde \zeta_T:&=2\left(1-\sqrt{\tfrac{\mu}{2\tilde L}}\right)^{\frac{T}{2}} \sqrt{\eta K_0(\lambda^*_{\mu})}\\&\quad + \sqrt{\tfrac{8\eta\sqrt{m}G}{\xi} \max_{t\in[T]}\|\delta_t\|_2},
		\end{align*}
		$\Delta_t := d({\underline \lambda_t})  - f_0(x_t) - \langle \underline \lambda_t, f(x_t)  \rangle $,  $K_0(\lambda) \coloneqq d_{ \mu}(\lambda)-d_{ \mu}(\bar \lambda_0)+\tfrac{\alpha}{2}(\mu+\tfrac{1}{\eta})\|\lambda_0-\lambda\|_2^2$, and $\delta_t \coloneqq\widehat \nabla d_{\mu}(\underline \lambda_t)-\nabla d_{\mu}(\underline \lambda_t).$
	\end{thm}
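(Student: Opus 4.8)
The plan is to mirror the proof of \Cref{thm1}, recognizing that AR-CO is exactly Nesterov's accelerated gradient \emph{ascent} applied to the perturbed dual $d_\mu(\lambda)=d(\lambda)-\tfrac{\mu}{2}\|\lambda\|_2^2$. Since $d(\lambda)=\min_{x\in X}\mcL(x,\lambda)$ is an infimum of functions affine in $\lambda$, it is concave, so $d_\mu$ is $\mu$-strongly concave and, by \Cref{assum:dualsmooth}, $\tilde L$-smooth. The argument proceeds in three stages: (i) establish an accelerated convergence inequality for the inexact dual ascent that holds for \emph{every} comparison point $\lambda\in\Lambda$; (ii) rewrite that inequality in terms of the primal quantities $f_0(x_t)$ and $f(x_t)$ using the first-order optimality condition of the projected update in step $7'$; and (iii) specialize the comparison point and invoke strong duality (\Cref{assum:strongdual}) and Slater's condition (\Cref{assum:slatercondition}) to extract the optimality-gap and constraint-violation bounds.

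For stage (i), I would adapt the accelerated scheme of \cite[Section 3.3]{lan2020first} to the three interleaved sequences $\underline\lambda_t,\lambda_t,\bar\lambda_t$. The starting point is the prox-optimality condition of step $7'$: for all $\lambda\in\Lambda$, $\langle \eta[-\widehat\nabla d_\mu(\underline\lambda_t)+\mu(\lambda_t-\underline\lambda_t)]+(\lambda_t-\lambda_{t-1}),\,\lambda-\lambda_t\rangle\ge 0$. Combining this with the three-point identity for the quadratic prox term, the $\mu$-strong-concavity lower bound at $\underline\lambda_t$, and the $\tilde L$-smoothness upper bound, I would form the potential $K_0(\lambda)$ and telescope the weighted recursion to obtain, for each $\lambda\in\Lambda$, a bound on $\sum_t w_t\big[d_\mu(\lambda)-\langle\widehat\nabla d_\mu(\underline\lambda_t),\lambda-\lambda_t\rangle-d_\mu(\lambda_t)\big]$ by the contraction factor $\big(1-\sqrt{\mu/2\tilde L}\big)^{T/2}\sqrt{\eta K_0(\lambda_\mu^*)}$ plus an accumulated-inexactness term in $\max_t\|\delta_t\|_2$, which are exactly the two pieces of $\tilde\zeta_T$. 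Writing $\widehat\nabla d_\mu(\underline\lambda_t)=\nabla d_\mu(\underline\lambda_t)+\delta_t$ keeps track of the gradient error arising because LagOPT returns only a $\delta$-accurate minimizer $x_t$.

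For stages (ii)--(iii), I would substitute $\widehat\nabla d_\mu(\underline\lambda_t)=f(x_t)-\mu\underline\lambda_t$ (step $6'$) and use $\mcL(x_t,\underline\lambda_t)=f_0(x_t)+\langle\underline\lambda_t,f(x_t)\rangle$ together with the LagOPT residual $\Delta_t=d(\underline\lambda_t)-\mcL(x_t,\underline\lambda_t)$ to express the telescoped sum through $f_0(x_t)$ and $f(x_t)$. Because the problem is nonconvex we cannot average the $x_t$ directly; instead the randomized output restores linearity in expectation, so that $\mE[f_0(\tilde x)]=\sum_t w_t f_0(x_t)$ and $\mE[f(\tilde x)]=\sum_t w_t f(x_t)$ with the output weights $w_t$ matching the accelerated averaging coefficients produced by $\bar\lambda_t=(1-\alpha)\bar\lambda_{t-1}+\alpha\lambda_t$. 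Taking $\lambda=\lambda_\mu^*$ and bounding the regularization bias by $\tfrac{\mu}{2}mB^2$, with $B=G/\xi$ the Slater bound on the optimal multiplier, then yields, via $f_0^*=d^*$, the optimality-gap bound with its $6\mu mG^2/\xi^2$ term and its $\sqrt{2\tilde L/\mu}\,\max_t|\Delta_t|$ amplification of the LagOPT error. Choosing an extremal feasible comparison point in $\Lambda$ (using $\|\lambda_\mu^*\|_2\le B$) and isolating the constraint coordinates gives the constraint-violation bound, with the extra $2mG(1-\sqrt{\mu/2\tilde L})^{T-1}$ term arising from the residual distance of the final averaged multiplier to that comparison point.

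The hard part will be stage (i): unlike the standard Nesterov analysis for minimization, where one only needs to control $d_\mu(\lambda_\mu^*)-d_\mu(\bar\lambda_T)$, here I must carry a \emph{free} comparison point $\lambda$ through the entire telescoping so that the single resulting inequality simultaneously controls the optimality gap (via $\lambda=\lambda_\mu^*$) and the constraint violation (via an extremal feasible $\lambda$), all while propagating the inexact gradients $\delta_t$ at the accelerated $1-\sqrt{\mu/2\tilde L}$ rate rather than the slower $1-\mu/\tilde L$ rate. Correctly coupling the three sequences $\underline\lambda_t,\lambda_t,\bar\lambda_t$ with the prox telescoping and the strong-concavity curvature is the delicate step. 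A secondary subtlety is verifying that the output weights coincide exactly with the averaging coefficients, so that passing to the expectation reproduces the weighted primal sums without loss.
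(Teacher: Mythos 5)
Your proposal is correct and takes essentially the same route as the paper, whose entire proof of \Cref{thm2} is the observation that one repeats the proof of \Cref{thm1} with $V_0(\rho)$ and $V(\rho)-c$ replaced by $-f_0(x)$ and $-f(x)$, with \Cref{assum:dualsmooth,assum:strongdual,assum:slatercondition} supplying the dual smoothness, strong duality, and Slater/multiplier-bound properties that were proved as propositions in the CMDP case. Your stages (i)--(iii) mirror the paper's \Cref{prop:keyconvergenceprop}, \Cref{prop:connection}, and the final combination step (including the \Cref{prop:constraintviolation}-style extraction with an extremal comparison multiplier), and your point that the randomized output $\tilde x$ restores linearity in expectation is exactly how the paper replaces the occupancy-measure policy mixing of \Cref{prop:tildepi}, which has no analogue for general nonconvex $f_i$.
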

\begin{corlry}\label{corollary:totalcomplexity_ARCO}
With a computational complexity of $\tilde{\mathcal{O}}({L}/{\sqrt{\epsilon}})$, Algorithm AR-CO outputs an $\epsilon$-optimal policy with respect to both the optimality gap and constraint violation.
\end{corlry}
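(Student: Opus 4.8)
The plan is to read off \Cref{corollary:totalcomplexity_ARCO} directly from \Cref{thm2} by choosing the regularization parameter $\mu$, the inner accuracy $\delta$, and the iteration count $T$ so that every term appearing in the optimality-gap and constraint-violation bounds is at most a constant multiple of $\epsilon$, and then counting the total number of oracle calls as $T$ times the cost of a single $\mathrm{LagOPT}$ invocation. I would first sort the terms into three groups: (i) the bias terms $6\mu mG^2/\xi^2$ and $6\mu mG/\xi$, controlled purely by $\mu$; (ii) the geometrically decaying terms carrying a factor $(1-\sqrt{\mu/(2\tilde L)})^{T}$ or $(1-\sqrt{\mu/(2\tilde L)})^{T/2}$, including the first summand of $\tilde\zeta_T$; and (iii) the inner-error terms involving $\max_{t}|\Delta_t|$ and $\max_t\|\delta_t\|_2$, which are amplified by the (growing) factors $\sqrt{2\tilde L/\mu}$ and $\eta$.

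First I would set $\mu=\Theta(\epsilon)$, which immediately forces the group-(i) bias terms below $\epsilon$ and, since $\tilde L=L+\mu=\Theta(L)$, fixes the contraction factor at $1-\sqrt{\mu/(2\tilde L)}=1-\Theta(\sqrt{\epsilon/L})$. With this rate, choosing $T=\Theta\big(\sqrt{L/\epsilon}\,\log(1/\epsilon)\big)=\tilde{\mathcal O}(\sqrt{L/\epsilon})$ makes $(1-\sqrt{\mu/(2\tilde L)})^{T}$ smaller than any prescribed polynomial in $\epsilon$, so that all group-(ii) terms—after multiplication by the polynomially bounded prefactors $2\sqrt m G$, $2\sqrt m\xi$, $2mG$, and $\sqrt{\eta K_0(\lambda_\mu^*)}$—fall below $\epsilon$. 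Here I would verify that $\eta=\alpha/(\mu(1-\alpha))=\Theta(1/\sqrt{\mu\tilde L})=\Theta(1/\sqrt{\epsilon L})$ and that $K_0(\lambda_\mu^*)$ are only polynomially large in $1/\epsilon$, so that the exponential decay dominates.

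The remaining work is to control the group-(iii) inner-error terms. Since $\mathrm{LagOPT}$ returns $x_t$ solving the Lagrangian to accuracy $\delta$, we have $|\Delta_t|=\mcL(x_t,\underline\lambda_t)-d(\underline\lambda_t)\le\delta$, and the uniqueness/smoothness in \Cref{assum:dualsmooth} (guaranteed by the gradient-dominance condition) lets one bound $\|\delta_t\|_2=\|f(x_t)-f(x^*_{\underline\lambda_t})\|_2$ by a fixed power of $\delta$. Because these errors enter multiplied by $\sqrt{2\tilde L/\mu}=\Theta(\sqrt{L/\epsilon})$ and by $\sqrt{\eta}=\Theta(\epsilon^{-1/4}L^{-1/4})$, I would pick $\delta$ to be a sufficiently high fixed polynomial in $\epsilon$ (for instance forcing $|\Delta_t|\le\Theta(\epsilon^{3/2}/\sqrt L)$ and $\|\delta_t\|_2\le\Theta(\epsilon^{5/2}\sqrt L)$) so that even after amplification these terms stay below $\epsilon$. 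Crucially, since $\mathrm{LagOPT}$ converges linearly, reaching accuracy $\delta=\mathrm{poly}(\epsilon)$ costs only $\mathcal O(\log(1/\delta))=\tilde{\mathcal O}(1)$ oracle calls modulo the conditioning of the Lagrangian.

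Collecting the pieces, the total complexity is $T$ outer iterations, each invoking $\mathrm{LagOPT}$ once; multiplying the outer count $\tilde{\mathcal O}(\sqrt{L/\epsilon})$ by the per-call cost—whose dependence on the Lagrangian's conditioning is reflected in the dual smoothness $L$—and absorbing all logarithmic factors into $\tilde{\mathcal O}(\cdot)$ yields the claimed $\tilde{\mathcal O}(L/\sqrt\epsilon)$. I expect the main obstacle to be the balancing in group (iii): because the amplifying factors $\sqrt{\tilde L/\mu}$ and $\eta$ blow up as $\epsilon\to0$, one must confirm that the required inner accuracy $\delta$ remains only polynomially small in $\epsilon$ (rather than exponentially small), since only then does the logarithmic per-call cost of $\mathrm{LagOPT}$ keep the overall complexity at the desired order; the rest of the argument is bookkeeping of the constants and of the dependence on $m$, $G$, and $\xi$.
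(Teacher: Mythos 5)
Your proposal is correct and follows essentially the same route as the paper: the paper proves this corollary by repeating the argument of Corollary~\ref{corollary:totalcomplexity} (Section~\ref{sec:proofofcorollary1}) with $V_0(\rho)$, $V(\rho)-c$ replaced by $-f_0(x)$, $-f(x)$, i.e., choosing $\mu=\Theta(\epsilon)$, $T=\tilde{\mathcal{O}}\bigl(\sqrt{\tilde L/\mu}\bigr)=\tilde{\mathcal{O}}\bigl(\sqrt{L/\epsilon}\bigr)$, inner accuracy $\delta=\mathrm{poly}(\epsilon)$, and a logarithmic per-call cost for the linearly convergent inner solver — exactly your parameter choices and bookkeeping. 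The only difference is cosmetic: here no entropy parameter $\tau$ is needed since Assumption~\ref{assum:dualsmooth} supplies dual smoothness directly, which you correctly account for.
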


The properties of the constrained MDP that enable the proof of \Cref{thm1} have already been captured in \Cref{assum:strongdual,assum:dualsmooth,assum:slatercondition}. Thus, the proof of \Cref{thm2} and \Cref{corollary:totalcomplexity_ARCO} follows by replacing $V_0(\rho)$ and $V(\rho)-c$ with $-f_0(x)$ and $-f(x)$ in the proof of \Cref{thm1} and applying \Cref{assum:strongdual,assum:dualsmooth,assum:slatercondition} when needed.

\section{Experiments}	
Our experiments are conducted on the environment Acrobot-v1, OpenAI Gym \citep{mei2020on}. The Acrobot-v1 simulates the acrobot system where there exist two joints and two links. The upper joint is fixed and the lower joint is actuated by the agent. The goal is to swing the end of the lower link up to a given height. Moreover, two safe penalties are implemented when (I) the first link swings in a prohibited direction, or when (II) the second link swings in a prohibited direction with respect to the first link. 

We compare our AR-CPO algorithm with the state-of-the-art primal-dual optimization (PDO) method for CMDP \citep{chow2017risk}. In order for a fair comparison, the same neural softmax policy and the trust region policy optimization \citep{schulman2015trust} are used in both algorithms. In \Cref{fig:fig}, we show the performance of the two algorithms under their best tuned parameters. The opaque lines provides the accumulated discounted reward (for the objective and the constraints) averaged over $10$ random initialized seeds and translucent error bands have the width of two standard deviations of the $10$ random initializations, respectively. More details on the environment and parameters setting can be found in \Cref{sec:appendixnumericalsim}. The dashed lines in \Cref{fig:fig} (b) and (c) are the constraint thresholds, i.e. $c_{1}$ and $c_{2}$.

\begin{figure}[h!]
\begin{subfigure}{.25\textwidth}
  \centering
  \includegraphics[width=\linewidth]{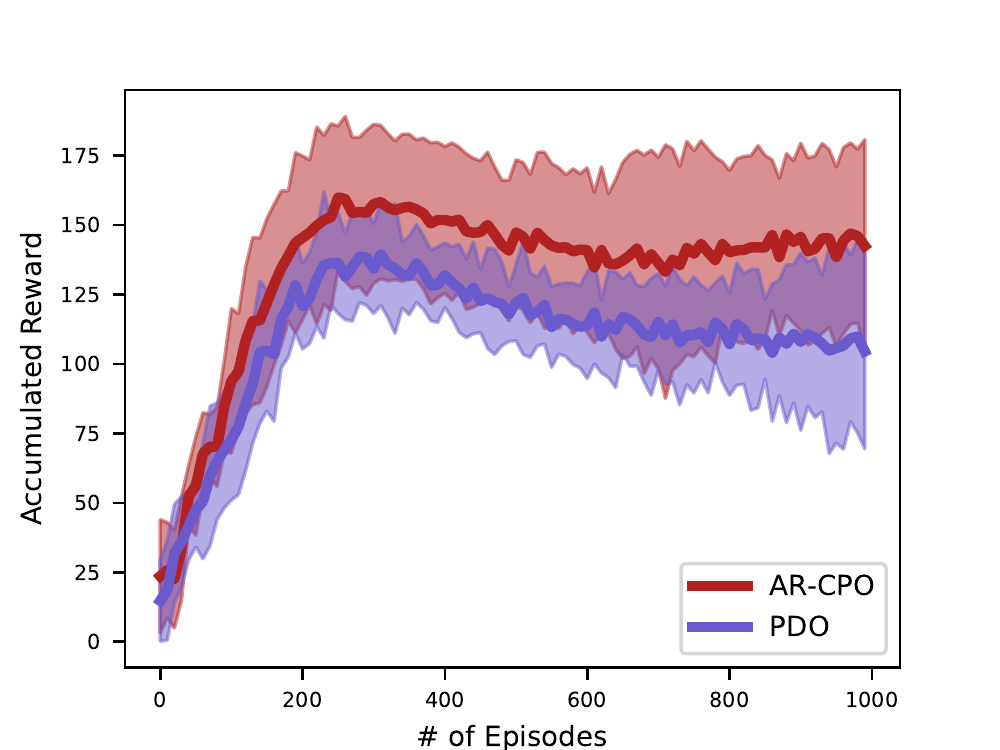}
  \caption{Objective}
  \label{fig:sfig1}
\end{subfigure}%
\begin{subfigure}{.25\textwidth}
  \centering
  \includegraphics[width=\linewidth]{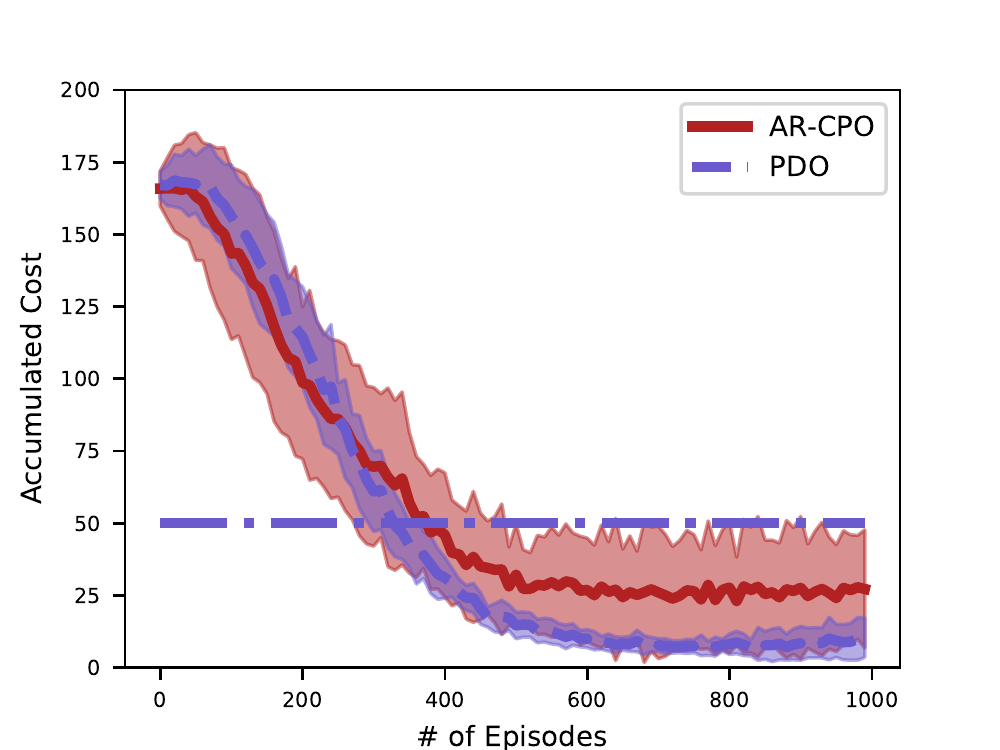}
  \caption{Constraint I}
  \label{fig:sfig2}
\end{subfigure}
\begin{subfigure}{.25\textwidth}
  \centering
  \includegraphics[width=\linewidth]{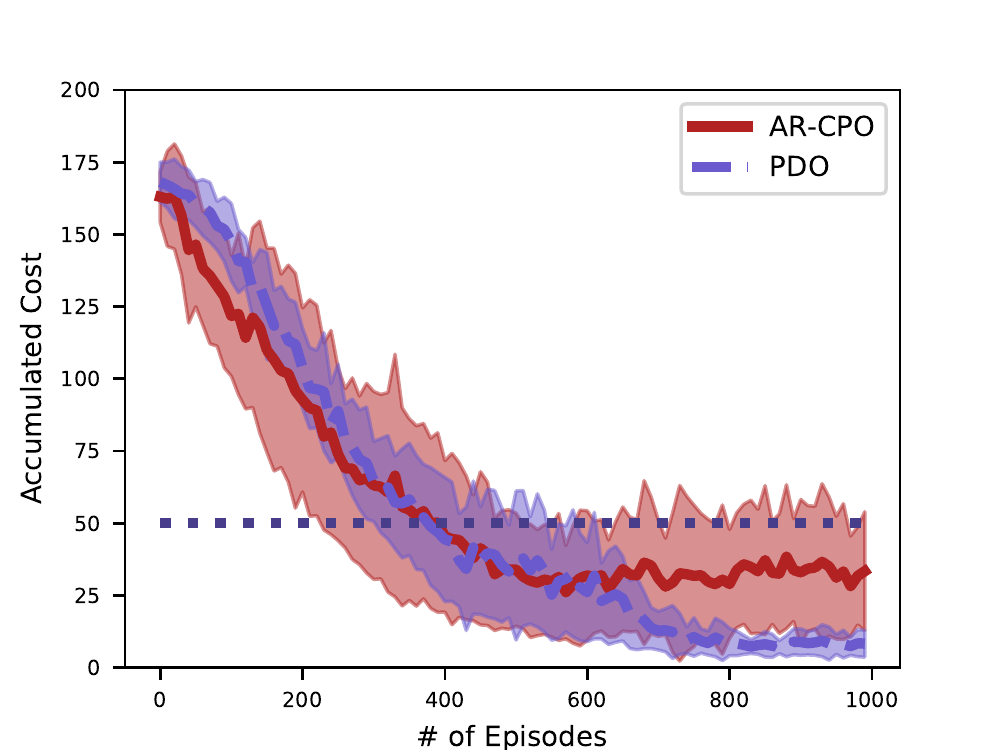}
  \caption{Constraint II}
  \label{fig:sfig3}
\end{subfigure}
\caption{Comparison between AR-CPO and PDO}
\label{fig:fig}
\end{figure}

From \Cref{fig:sfig1}, we observe that, our AR-CPO has a much higher accumulated reward in  the objective function, and the standard deviations of both algorithms are similar. \Cref{fig:sfig2,fig:sfig3} show that the accumulated reward in the constraints of AR-CPO decays slightly faster than that of PDO. After the constraints are satisfied, the cumulative reward in the constraints of AR-CPO is much closer to the threshold, which enables AR-CPO to explore  a higher reward in the objective function.

\section{Conclusion}
In this paper, we developed a novel AR-CPO approach for solving the CMDP problem, and showed that AR-CPO achieves an improved complexity of $\tilde{\mathcal {O}}(1/\epsilon)$, which improves the complexity results in  existing studies \cite{ding2020natural,paternain2019constrained} by a factor of $\mathcal O(1/\epsilon)$. Our AR-CPO approach involves a novel blend of three ingredients: (a) entropy regularized policy optimization, (b) dual variable regularizer, and (c) Nesterov’s accelerated gradient descent dual optimizer. To the best of our knowledge, this is the first establishment that CMDP, despite its nonconcave nature of the objective and constraint functions, can achieve the same complexity as the convex optimization with convex constraints. Furthermore, we extended our approach to accelerate a class of nonconvex functional constrained optimization problems, and showed that by exploiting geometric properties, e.g., the gradient dominance condition, our approach achieves the $\mathcal O(1/\sqrt{\epsilon})$ rate of convergence to the global optimum. The numerical simulations also confirm the advantage of the proposed approach. 
\bibliographystyle{authordate1}
\bibliography{rl}

\begin{thebibliography}{}

\bibitem[\protect\citename{Achiam {\em et~al.\ }\relax,
  }2017]{achiam2017constrained}
Achiam, Joshua, Held, David, Tamar, Aviv, \& Abbeel, Pieter. 2017.
\newblock Constrained policy optimization.
\newblock {\em Pages  22--31 of:} {\em International Conference on Machine
  Learning (ICML)}.

\bibitem[\protect\citename{Agarwal {\em et~al.\ }\relax,
  }2019]{agarwal2019optimality}
Agarwal, Alekh, Kakade, Sham~M, Lee, Jason~D, \& Mahajan, Gaurav. 2019.
\newblock Optimality and approximation with policy gradient methods in {Markov}
  decision processes.
\newblock {\em arXiv preprint arXiv:1908.00261}.

\bibitem[\protect\citename{Agarwal {\em et~al.\ }\relax,
  }2020]{agarwal2019theory}
Agarwal, Alekh, Kakade, Sham~M, Lee, Jason~D, \& Mahajan, Gaurav. 2020.
\newblock Optimality and Approximation with Policy Gradient Methods in {Markov}
  Decision Processes.
\newblock {\em In:} {\em Proc. Annual Conference on Learning Theory (COLT)}.

\bibitem[\protect\citename{Altman, }1999]{altman1999constrained}
Altman, Eitan. 1999.
\newblock {\em Constrained {Markov} Decision Processes}.
\newblock  Vol. 7.
\newblock CRC Press.

\bibitem[\protect\citename{Bhandari \& Russo, }2019]{bhandari2019global}
Bhandari, Jalaj, \& Russo, Daniel. 2019.
\newblock Global optimality guarantees for policy gradient methods.
\newblock {\em arXiv preprint arXiv:1906.01786}.

\bibitem[\protect\citename{Bhandari \& Russo, }2020]{bhandari2020note}
Bhandari, Jalaj, \& Russo, Daniel. 2020.
\newblock A note on the linear convergence of policy gradient methods.
\newblock {\em arXiv preprint arXiv:2007.11120}.

\bibitem[\protect\citename{Boob {\em et~al.\ }\relax,
  }2019]{boob2019stochastic}
Boob, Digvijay, Deng, Qi, \& Lan, Guanghui. 2019.
\newblock Stochastic first-order methods for convex and nonconvex functional
  constrained optimization.
\newblock {\em arXiv preprint arXiv:1908.02734}.

\bibitem[\protect\citename{Cen {\em et~al.\ }\relax, }2020]{cen2020fast}
Cen, Shicong, Cheng, Chen, Chen, Yuxin, Wei, Yuting, \& Chi, Yuejie. 2020.
\newblock Fast global convergence of natural policy gradient methods with
  entropy regularization.
\newblock {\em arXiv preprint arXiv:2007.06558}.

\bibitem[\protect\citename{Chow {\em et~al.\ }\relax, }2017]{chow2017risk}
Chow, Yinlam, Ghavamzadeh, Mohammad, Janson, Lucas, \& Pavone, Marco. 2017.
\newblock Risk-constrained reinforcement learning with percentile risk
  criteria.
\newblock {\em The Journal of Machine Learning Research}, {\bf 18}(1),
  6070--6120.

\bibitem[\protect\citename{Chow {\em et~al.\ }\relax, }2018]{chow2018lyapunov}
Chow, Yinlam, Nachum, Ofir, Duenez-Guzman, Edgar, \& Ghavamzadeh, Mohammad.
  2018.
\newblock A {L}yapunov-based approach to safe reinforcement learning.
\newblock {\em Pages  8092--8101 of:} {\em Proc. Advances in Neural Information
  Processing Systems (NIPS)}.

\bibitem[\protect\citename{Chow {\em et~al.\ }\relax, }2019]{chow2019lyapunov}
Chow, Yinlam, Nachum, Ofir, Faust, Aleksandra, Duenez-Guzman, Edgar, \&
  Ghavamzadeh, Mohammad. 2019.
\newblock {Lyapunov}-based safe policy optimization for continuous control.
\newblock {\em arXiv preprint arXiv:1901.10031}.

\bibitem[\protect\citename{Dalal {\em et~al.\ }\relax, }2018]{dalal2018safe}
Dalal, Gal, Dvijotham, Krishnamurthy, Vecerik, Matej, Hester, Todd, Paduraru,
  Cosmin, \& Tassa, Yuval. 2018.
\newblock Safe exploration in continuous action spaces.
\newblock {\em arXiv preprint arXiv:1801.08757}.

\bibitem[\protect\citename{Ding {\em et~al.\ }\relax, }2020a]{ding2020natural}
Ding, Dongsheng, Zhang, Kaiqing, Basar, Tamer, \& Jovanovic, Mihailo. 2020a.
\newblock Natural Policy Gradient Primal-Dual Method for Constrained {Markov}
  Decision Processes.
\newblock {\em In:} {\em Proc. Advances in Neural Information Processing
  Systems (NIPS)},  vol. 33.

\bibitem[\protect\citename{Ding {\em et~al.\ }\relax, }2020b]{ding2020provably}
Ding, Dongsheng, Wei, Xiaohan, Yang, Zhuoran, Wang, Zhaoran, \& Jovanovi{\'c},
  Mihailo~R. 2020b.
\newblock Provably efficient safe exploration via primal-dual policy
  optimization.
\newblock {\em arXiv preprint arXiv:2003.00534}.

\bibitem[\protect\citename{Duan {\em et~al.\ }\relax,
  }2016]{duan2016benchmarking}
Duan, Yan, Chen, Xi, Houthooft, Rein, Schulman, John, \& Abbeel, Pieter. 2016.
\newblock Benchmarking deep reinforcement learning for continuous control.
\newblock {\em Pages  1329--1338 of:} {\em International conference on machine
  learning}.
\newblock PMLR.

\bibitem[\protect\citename{Facchinei {\em et~al.\ }\relax,
  }2021]{facchinei2021ghost}
Facchinei, Francisco, Kungurtsev, Vyacheslav, Lampariello, Lorenzo, \& Scutari,
  Gesualdo. 2021.
\newblock Ghost penalties in nonconvex constrained optimization: Diminishing
  stepsizes and iteration complexity.
\newblock {\em Mathematics of Operations Research}.

\bibitem[\protect\citename{Fisac {\em et~al.\ }\relax, }2018]{fisac2018general}
Fisac, Jaime~F, Akametalu, Anayo~K, Zeilinger, Melanie~N, Kaynama, Shahab,
  Gillula, Jeremy, \& Tomlin, Claire~J. 2018.
\newblock A general safety framework for learning-based control in uncertain
  robotic systems.
\newblock {\em IEEE Transactions on Automatic Control}, {\bf 64}(7),
  2737--2752.

\bibitem[\protect\citename{Garc{\i}a \& Fern{\'a}ndez,
  }2015]{garcia2015comprehensive}
Garc{\i}a, Javier, \& Fern{\'a}ndez, Fernando. 2015.
\newblock A comprehensive survey on safe reinforcement learning.
\newblock {\em Journal of Machine Learning Research}, {\bf 16}(1), 1437--1480.

\bibitem[\protect\citename{Haarnoja {\em et~al.\ }\relax,
  }2018]{haarnoja2018soft}
Haarnoja, Tuomas, Zhou, Aurick, Abbeel, Pieter, \& Levine, Sergey. 2018.
\newblock Soft actor-critic: Off-policy maximum entropy deep reinforcement
  learning with a stochastic actor.
\newblock {\em Pages  1861--1870 of:} {\em International Conference on Machine
  Learning}.
\newblock PMLR.

\bibitem[\protect\citename{Hazan {\em et~al.\ }\relax,
  }2019]{hazan2019provably}
Hazan, Elad, Kakade, Sham, Singh, Karan, \& Van~Soest, Abby. 2019.
\newblock Provably efficient maximum entropy exploration.
\newblock {\em Pages  2681--2691 of:} {\em International Conference on Machine
  Learning}.
\newblock PMLR.

\bibitem[\protect\citename{Kakade, }2001]{kakade2001natural}
Kakade, Sham~M. 2001.
\newblock A natural policy gradient.
\newblock {\em Advances in neural information processing systems}, {\bf 14},
  1531--1538.

\bibitem[\protect\citename{Lan, }2021]{LanPMD2021}
Lan, G. 2021.
\newblock Policy Mirror Descent for Reinforcement Learning: Linear Convergence,
  New Sampling Complexity, and Generalized Problem Classes.
\newblock {\em arXiv preprint arXiv:2102.00135}.

\bibitem[\protect\citename{Lan, }2020]{lan2020first}
Lan, Guanghui. 2020.
\newblock {\em First-order and Stochastic Optimization Methods for Machine
  Learning}.
\newblock Springer.

\bibitem[\protect\citename{Levin \& Peres, }2017]{levin2017markov}
Levin, David~A, \& Peres, Yuval. 2017.
\newblock {\em Markov Chains and Mixing Times}.
\newblock  Vol. 107.
\newblock American Mathematical Soc.

\bibitem[\protect\citename{Liu {\em et~al.\ }\relax, }2019a]{liu2019neural}
Liu, Boyi, Cai, Qi, Yang, Zhuoran, \& Wang, Zhaoran. 2019a.
\newblock Neural proximal/trust region policy optimization attains globally
  optimal policy.
\newblock {\em In:} {\em In Proc. Advances in Neural Information Processing
  Systems (NeurIPS)},  vol. 32.

\bibitem[\protect\citename{Liu {\em et~al.\ }\relax, }2019b]{liu2019ipo}
Liu, Yongshuai, Ding, Jiaxin, \& Liu, Xin. 2019b.
\newblock {IPO}: interior-point policy optimization under constraints.
\newblock {\em arXiv preprint arXiv:1910.09615}.

\bibitem[\protect\citename{Mei {\em et~al.\ }\relax, }2020a]{mei2020on}
Mei, J., Xiao, C., Szepesvari, C., \& Schuurmans, D. 2020a.
\newblock On the global convergence rates of softmax policy gradient methods.
\newblock {\em arXiv preprint arXiv:2005.06392}.

\bibitem[\protect\citename{Mei {\em et~al.\ }\relax, }2020b]{mei2020global}
Mei, Jincheng, Xiao, Chenjun, Szepesvari, Csaba, \& Schuurmans, Dale. 2020b.
\newblock On the Global Convergence Rates of Softmax Policy Gradient Methods.
\newblock {\em arXiv preprint arXiv:2005.06392}.

\bibitem[\protect\citename{Mitrophanov, }2005]{mitrophanov2005sensitivity}
Mitrophanov, A.~Y. 2005.
\newblock Sensitivity and convergence of uniformly ergodic {M}arkov chains.
\newblock {\em Journal of Applied Probability}, {\bf 42}(4), 1003--1014.

\bibitem[\protect\citename{Mnih {\em et~al.\ }\relax,
  }2016]{mnih2016asynchronous}
Mnih, Volodymyr, Badia, Adria~Puigdomenech, Mirza, Mehdi, Graves, Alex,
  Lillicrap, Timothy, Harley, Tim, Silver, David, \& Kavukcuoglu, Koray. 2016.
\newblock Asynchronous methods for deep reinforcement learning.
\newblock {\em Pages  1928--1937 of:} {\em International conference on machine
  learning}.
\newblock PMLR.

\bibitem[\protect\citename{Nachum {\em et~al.\ }\relax,
  }2017]{nachum2017bridging}
Nachum, Ofir, Norouzi, Mohammad, Xu, Kelvin, \& Schuurmans, Dale. 2017.
\newblock Bridging the gap between value and policy based reinforcement
  learning.
\newblock {\em In:} {\em Proc. Advances in Neural Information Processing
  Systems (NIPS)}.

\bibitem[\protect\citename{Nemirovsky, }1992]{nemirovsky1992information}
Nemirovsky, Arkadi~S. 1992.
\newblock Information-based complexity of linear operator equations.
\newblock {\em Journal of Complexity}, {\bf 8}(2), 153--175.

\bibitem[\protect\citename{Ono {\em et~al.\ }\relax, }2015]{ono2015chance}
Ono, Masahiro, Pavone, Marco, Kuwata, Yoshiaki, \& Balaram, J. 2015.
\newblock Chance-constrained dynamic programming with application to risk-aware
  robotic space exploration.
\newblock {\em Autonomous Robots}, {\bf 39}(4), 555--571.

\bibitem[\protect\citename{Ouyang \& Xu, }2019]{ouyang2019lower}
Ouyang, Yuyuan, \& Xu, Yangyang. 2019.
\newblock Lower complexity bounds of first-order methods for convex-concave
  bilinear saddle-point problems.
\newblock {\em Mathematical Programming},  1--35.

\bibitem[\protect\citename{Paternain {\em et~al.\ }\relax,
  }2019a]{paternain2019constrained}
Paternain, Santiago, Chamon, Luiz~FO, Calvo-Fullana, Miguel, \& Ribeiro,
  Alejandro. 2019a.
\newblock Constrained reinforcement learning has zero duality gap.
\newblock {\em In:} {\em Proc. Advances in Neural Information Processing
  Systems (NIPS)}.

\bibitem[\protect\citename{Paternain {\em et~al.\ }\relax,
  }2019b]{paternain2019safe}
Paternain, Santiago, Calvo-Fullana, Miguel, Chamon, Luiz~FO, \& Ribeiro,
  Alejandro. 2019b.
\newblock Safe policies for reinforcement learning via primal-dual methods.
\newblock {\em arXiv preprint arXiv:1911.09101}.

\bibitem[\protect\citename{Peters {\em et~al.\ }\relax,
  }2010]{peters2010relative}
Peters, Jan, Mulling, Katharina, \& Altun, Yasemin. 2010.
\newblock Relative entropy policy search.
\newblock {\em In:} {\em Proceedings of the AAAI Conference on Artificial
  Intelligence},  vol. 24.

\bibitem[\protect\citename{Schulman {\em et~al.\ }\relax,
  }2015]{schulman2015trust}
Schulman, John, Levine, Sergey, Abbeel, Pieter, Jordan, Michael, \& Moritz,
  Philipp. 2015.
\newblock Trust region policy optimization.
\newblock {\em Pages  1889--1897 of:} {\em Proc. International Conference on
  Machine Learning (ICML)}.
\newblock PMLR.

\bibitem[\protect\citename{Shani {\em et~al.\ }\relax,
  }2020]{shani2020adaptive}
Shani, Lior, Efroni, Yonathan, \& Mannor, Shie. 2020.
\newblock Adaptive trust region policy optimization: {G}lobal convergence and
  faster rates for regularized {MDPs}.
\newblock {\em Proc. AAAI Conference on Artificial Intelligence (AAAI)}.

\bibitem[\protect\citename{Stooke {\em et~al.\ }\relax,
  }2020]{stooke2020responsive}
Stooke, Adam, Achiam, Joshua, \& Abbeel, Pieter. 2020.
\newblock Responsive safety in reinforcement learning by {PID} {L}agrangian
  methods.
\newblock {\em In:} {\em In Proc. International Conference on Machine Learning
  (ICML)}.

\bibitem[\protect\citename{Sutton \& Barto, }2018]{sutton2018reinforcement}
Sutton, Richard~S., \& Barto, Andrew~G. 2018.
\newblock {\em Reinforcement Learning: An Introduction, Second Edition}.
\newblock The MIT Press, Cambridge, Massachusetts.

\bibitem[\protect\citename{Szepesv\'ari, }2020]{csaba}
Szepesv\'ari, Csaba. 2020.
\newblock {\em Constrained {MDP}s and the reward hypothesis}.
\newblock
  \url{http://readingsml.blogspot.com/2020/03/constrained-mdps-and-reward-hypothesis.html}.
\newblock Accessed: 2021-01-27.

\bibitem[\protect\citename{Tessler {\em et~al.\ }\relax,
  }2018]{tessler2018reward}
Tessler, Chen, Mankowitz, Daniel~J, \& Mannor, Shie. 2018.
\newblock Reward constrained policy optimization.
\newblock {\em In:} {\em In Proc. International Conference on Learning
  Representations (ICLR)}.

\bibitem[\protect\citename{Wang {\em et~al.\ }\relax, }2019]{wang2019neural}
Wang, Lingxiao, Cai, Qi, Yang, Zhuoran, \& Wang, Zhaoran. 2019.
\newblock Neural policy gradient methods: Global optimality and rates of
  convergence.
\newblock {\em arXiv preprint arXiv:1909.01150}.

\bibitem[\protect\citename{Wang {\em et~al.\ }\relax, }2017]{wang2017penalty}
Wang, Xiao, Ma, Shiqian, \& Yuan, Ya-xiang. 2017.
\newblock Penalty methods with stochastic approximation for stochastic
  nonlinear programming.
\newblock {\em Mathematics of computation}, {\bf 86}(306), 1793--1820.

\bibitem[\protect\citename{Williams \& Peng, }1991]{williams1991function}
Williams, Ronald~J, \& Peng, Jing. 1991.
\newblock Function optimization using connectionist reinforcement learning
  algorithms.
\newblock {\em Connection Science}, {\bf 3}(3), 241--268.

\bibitem[\protect\citename{Xiao {\em et~al.\ }\relax, }2019]{xiao2019}
Xiao, Chenjun, Huang, Ruitong, Mei, Jincheng, Schuurmans, Dale, \& M\"{u}ller,
  Martin. 2019.
\newblock Maximum Entropy {Monte-Carlo} Planning.
\newblock {\em In:} {\em Proc. Advances in Neural Information Processing
  Systems (NIPS)}.

\bibitem[\protect\citename{Xu {\em et~al.\ }\relax, }2020a]{xu2020improving}
Xu, Tengyu, Wang, Zhe, \& Liang, Yingbin. 2020a.
\newblock Improving sample complexity bounds for actor-critic algorithms.
\newblock {\em arXiv preprint arXiv:2004.12956}.

\bibitem[\protect\citename{Xu {\em et~al.\ }\relax, }2020b]{xu2020primal}
Xu, Tengyu, Liang, Yingbin, \& Lan, Guanghui. 2020b.
\newblock A Primal Approach to Constrained Policy Optimization: Global
  Optimality and Finite-Time Analysis.
\newblock {\em arXiv preprint arXiv:2011.05869}.

\bibitem[\protect\citename{Yang {\em et~al.\ }\relax,
  }2019]{yang2019projection}
Yang, Tsung-Yen, Rosca, Justinian, Narasimhan, Karthik, \& Ramadge, Peter~J.
  2019.
\newblock Projection-based constrained policy optimization.
\newblock {\em In:} {\em In Proc. International Conference on Learning
  Representations (ICLR)}.

\bibitem[\protect\citename{Yu {\em et~al.\ }\relax, }2019]{yu2019convergent}
Yu, Ming, Yang, Zhuoran, Kolar, Mladen, \& Wang, Zhaoran. 2019.
\newblock Convergent policy optimization for safe reinforcement learning.
\newblock {\em Pages  3127--3139 of:} {\em In Proc. Advances in Neural
  Information Processing Systems (NeurIPS)}.

\bibitem[\protect\citename{Zou {\em et~al.\ }\relax, }2019]{zou2019finite}
Zou, Shaofeng, Xu, Tengyu, \& Liang, Yingbin. 2019.
\newblock Finite-sample analysis for {SARSA} with linear function
  approximation.
\newblock {\em Pages  8665--8675 of:} {\em Proc. Advances in Neural Information
  Processing Systems (NeurIPS)}.

\end{thebibliography}





\newpage
\onecolumn
\appendix
{\Large\textbf{Supplementary Materials}}
\section{Example Entropy-Regularized Policy Optimizers (RegPOs)}\label{app:regpo}

In this section, we introduce two entropy-regularized policy optimizers (RegPOs), which solves the maximization problem of an entropy-regularized value function as
\begin{align}\label{eq:entropyreg}
\max_{\pi\in\Pi} V_{\tau, \lambda}^{\pi}(\rho):=V^\pi_{r_\lambda}(\rho)+\tau \mcH(\pi)
\end{align}
where $V^\pi_{r_\lambda}$ is the value function corresponding to the reward function $r_\lambda\coloneqq r_0 +\sum_{i=1}^m \lambda_i r_i$. Note that the above problem is equivalent to the entropy regularized Lagrangian maximization problem 
$\max_{\pi\in\Pi} \mcL_{\tau, \mu}(\pi, \lambda)$ for any $\lambda\in\Lambda$. 
Define the Q-function under regularized MDP as, for all $s\in\mcs$ and $a\in\mca$, 
\begin{align*}
    Q_{\tau, \lambda}^\pi(s,a) = r_\lambda(s,a) + \gamma\mE_{s^\prime\sim\mathsf{P}(\cdot|s,a)}\left[V^\pi_{\tau,\lambda}(s^\prime)\right].
\end{align*}
And we use the short-hand notation $Q^*_{\tau, \lambda}$ to denote $Q^{\pi^*_{\tau,\lambda}}_{\tau, \lambda}$ in the Appendix.


\subsection{RegPO-NPG}

\begin{algorithm}
	\caption{RegPO-NPG $(\delta,\lambda,\tau)$ \citep{kakade2001natural,cen2020fast}}\label{alg:regpo_npg}
	\begin{algorithmic}[1]
		\STATE \textbf{Input:} Output accuracy $\delta$, dual variable value $\lambda$, regularization coefficient $\tau$, rewards $r$.
		\STATE \textbf{Initialize:} $\theta_0=0$, $K = \left\lceil\log\left(\frac{2(r_{0,\max} +BR_{\max})}{(1-\gamma)\delta\tau}\right)/\left(\log\left(\tfrac{1}{\gamma}\right)\right)\right\rceil$.
		\FOR {$k= 0, ..., K$}
		\STATE  $\theta_{k+1} \leftarrow \theta_k +  \frac{1-\gamma}{\tau}(\mathcal{F}_\rho^{\theta_k})^\dagger\nabla_\theta V_{\tau, \lambda}^{\pi_{\theta_k}}(\rho)$
		\ENDFOR
	\end{algorithmic}
\end{algorithm}

We summarize an NPG-based RegPO in \Cref{alg:regpo_npg}. Suppose a softmax parameterization of the policy $\pi$ is used. Then, the natural policy gradient (NPG) can be applied to solve the entropy-regularized MDP \citep{kakade2001natural,cen2020fast}. Specifically, the parameter update is given by 
\begin{align*}
    \theta_{k+1} = \theta_k + \beta(\mathcal{F}_\rho^{\theta_k})^\dagger\nabla_\theta V_{\tau, \lambda}^{\pi_{\theta_k}}(\rho),
\end{align*}
where $\beta$ is the stepsize, $\nabla_\theta V_{\tau, \lambda}^{\pi_{\theta_k}}(\rho)$ denotes the policy gradient, and $\mathcal{F}_\rho^{\theta_k}$ denotes the Fisher information matrix defined blow
\begin{align*}
    \mathcal{F}_\rho^{\theta_k} \coloneqq \mE_{(s,a)\sim \nu_\rho^{\pi_{\theta_k}}} \bigg[(\nabla \log\pi_{\theta}(a|
    s))(\nabla \log\pi_{\theta}(a|
    s))^\top\bigg].
\end{align*}
Let $\pi_{\tau,\lambda}^*$ denote the optimal policy for \cref{eq:entropyreg}.
It has been shown in \citep{cen2020fast} that if we take the stepsize $\beta =(1-\gamma)/\tau$, RegPO-NPG converges linearly as given by
\begin{align*}
 &   \|Q_{\tau,\lambda}^* - Q_{\tau, \lambda}^{\pi_{\theta_{K}}} \|_\infty\le \|Q_{\tau,\lambda}^* - Q_{\tau, \lambda}^{\pi_{\theta_{0}}}\|_\infty \cdot \gamma^K, \\
&    \|\pi_{K+1} - \pi_{\tau,\lambda}^*\|_\infty \le \tfrac{2}{\tau} \|Q_{\tau,\lambda}^* - Q_{\tau, \lambda}^{\pi_{\theta_{K}}} \|_\infty.
\end{align*}
Therefore, choosing $\beta =(1-\gamma)/\tau$ and $K = \left\lceil\log\left(\frac{2(r_{0,\max} +BR_{\max})}{(1-\gamma)\delta\tau}\right)/\left(\log\left(\tfrac{1}{\gamma}\right)\right)\right\rceil$ guarantees that $\|\pi_{K+1} - \pi_{\tau,\lambda}^*\|_\infty\le \delta$.

\subsection{RegPO-SoftQ}

\begin{algorithm}
	\caption{RegPO-SoftQ $(\delta,\lambda,\tau)$}\label{alg:regpo_softq}
	\begin{algorithmic}[1]
		\STATE \textbf{Input:} Output accuracy $\delta$, dual variable value $\lambda$, regularization coefficient $\tau$.
		\STATE \textbf{Initialize:} $Q_0=0$, $K =\left\lceil\log\left(\frac{2(r_{0,\max} +BR_{\max})}{(1-\gamma)\delta\tau}\right)/\left(\log\left(\tfrac{1}{\gamma}\right)\right)\right\rceil$.
		\FOR {$k= 0, ..., K-1$}
		\STATE  $Q_{k+1}\leftarrow \mathcal{T}_{\tau,\lambda}(Q_k)$
		\ENDFOR
		\STATE $\pi_{K}(a|s) \leftarrow \exp(Q_{K}(s,a)/\tau)/\left(\sum_{a^\prime\in\mca}\exp(Q_{K}(s,a^\prime)/\tau)\right)$, for all $s\in\mcs$ and $a\in\mca$.
	\end{algorithmic}
\end{algorithm}

We propose a new soft Q-learning based RegPO, which is summarized in \Cref{alg:regpo_softq}.
%
For the $\tau$-regularized MDP problem in \cref{eq:entropyreg}, 
we define the following soft Bellman operator
\begin{align*}
    (\mathcal{T}_\tau(Q))_{s,a} = r_{\lambda}(s,a) +\gamma \mE_{s^\prime\sim\mathsf{P}(\cdot|s,a)}\left[\tau\log\left(\textstyle \sum_{a^\prime\in\mca} \exp\left(\frac{Q(s^\prime, a^\prime)}{\tau}\right)\right)\right],
\end{align*}
for every $s\in\mcs$ and $a\in\mca$, where we adopt the soft Q-learning update \citep{nachum2017bridging,cen2020fast} for the Q-function rather than the ``max" operator.

It can be shown that  the Q-function converges linearly:
\begin{align*}
    \|Q_K - Q^*_{\tau, \lambda}\|_\infty \le \|Q_0 -Q^*_{\tau, \lambda}\|_\infty\cdot \gamma^K.
\end{align*}
Then the relationship $\pi_{K}(a|s)\propto\exp(Q_K(s,a)/\tau)$ further implies that
\begin{align*}
    \|\pi_{K}- \pi^*_{\tau, \lambda}\|_\infty \le \tfrac{2}{\tau}\|Q_K - Q^*_{\tau, \lambda}\|_\infty,
\end{align*}
which yields $\|\pi_{K} - \pi_{\tau,\lambda}^*\|_\infty\le \delta$ for 
$K = \left\lceil\log\left(\frac{2(r_{0,\max} +BR_{\max})}{(1-\gamma)\delta\tau}\right)/\left(\log\left(\tfrac{1}{\gamma}\right)\right)\right\rceil$.

\section{Details of Numerical Simulations}\label{sec:appendixnumericalsim}
\subsection{Reward and Constraint Costs Setting}
The interaction of the agent and virtual environment is divided into episodes and the horizon of each episode is $500$. The agent receives a reward $r_0=+1$ (reward for the objective), when the end of the lower link is at a height of $0.5$. It is penalized with cost $r_1=+1$ (cost for constraint 1)  when the first link swings at a anticlockwise direction and the agent applies a $+1$ torque to the actuating joint; it also receives a penalty of $r_2=+1$ (cost for constraint 2) when the second link swings at a anticlockwise direction with respect to the first link and the agent applies a $+1$ torque to the actuating joint. The cost thresholds for both $r_1$ and $r_2$ are set to be $50$.
\subsection{Parameters Setting}
The details of PDO and TRPO are given in \citep{schulman2015trust,achiam2017constrained}. For the PDO algorithm, we tune the stepsize (for dual variable update) from the set $\{0.0001, 0.0005, 0.001, 0.005, 0.01, 0.05\}$, and the best performance (given in \Cref{fig:fig}) corresponds to the choice of 0.0005. For our AR-CPO method, we tune the following parameters.
\begin{itemize}[leftmargin=*]
	\item $q$:  parameter in the update of $\underline\lambda$,
	\[\underline{\lambda}_t = (1- q_t)\bar{\lambda}_{t-1} + q_t \lambda_{t-1},\]
	where $t$ is the number of iterations.
	\item $\alpha$: parameter in the update of $\bar\lambda$,
	\[\bar{\lambda}_t = (1- \alpha_t)\bar{\lambda}_{t-1}  + \alpha_t \lambda_t.\]
	
	In our experiment, we set $\alpha_t$ and $q_t$ to first diminish and then remain constant as follows
\begin{equation*}
	\alpha_t = q_t =\begin{cases} \frac{2s}{t+1}, \qquad & 1\le t<H\\
	\frac{2s}{H}, &\text{otherwise}, \end{cases}
\end{equation*}
where the parameters $H>0$ and $0<s<1$ completely determine $\alpha_t$ and $q_t$.
	
	\item inner\_loop: the number of policy optimization steps in RegPO subroutine.
	\item max\_KL: the parameter that controls the RegPO updates.
	\item $\eta$: the parameter in the update of $\lambda$, defined in Step 7 of \Cref{alg:arcpo} with $\mu =0$ and $\Lambda = [0, \infty)^2$.
\end{itemize}

We tune the above parameters over a range of choices as $s \in \{1, 0.3, 0.05\}$, $H \in \{15, 45, 85\}$, inner\_loop $\in \{1, 5\}$, max\_KL $\in \{0.001, 0.003, 0.01\}$ and $\eta \in \{0.0001, 0.0003, 0.001\}$, and  
do the grid search over all possible choices. The best performance (given in \Cref{fig:fig}) corresponds to the choices of $s = 1$, $H= 45$, inner\_loop=$1$, max\_KL$=0.01$ and $\eta =0.0003$.

\section{Proofs of \Cref{thm1} and \Cref{corollary:totalcomplexity}}\label{proof:thm1}

\subsection{Main Proof of \Cref{thm1}}  
\noindent

\textit{\textbf{Step 1.}}
The first step establishes that the dual function $d_{\tau, \mu}(\lambda)$ is an $L_d$-smooth function in \Cref{prop:smoothness}, which is a necessary property for the complexity order improvement. The entropy regularizer plays an important role for such a property to hold, without which \Cref{prop:smoothness} does not hold anymore. 
\begin{prop}\label{prop:smoothness}
    Suppose \Cref{assumption:ergodicity} holds, then  $d_{\tau, \mu}(\lambda)$ is $L_d$-smooth:
    \begin{align}
        \|\nabla d_{\tau, \mu}(\lambda) - \nabla d_{\tau, \mu}(\lambda^\prime) \|_2 \le L_d \|\lambda - \lambda^\prime\|_2, \quad\forall\lambda, \lambda^\prime\in\mR_+^m,
    \end{align}
    where $L_d= \frac{2R_{\max}^2L_\nu}{(1-\gamma)^2\tau} +\mu$.
\end{prop}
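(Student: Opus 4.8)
The plan is to compute $\nabla d_{\tau,\mu}$ in closed form and then bound the Lipschitz constant of each piece separately. First I would invoke an envelope (Danskin-type) argument: although $\mcL_{\tau,\mu}(\pi,\lambda)$ is nonconcave in $\pi$, the entropy regularizer $\tau\mcH(\pi)$ forces the inner maximizer $\pi^*_{\tau,\lambda}=\argmax_{\pi\in\Pi}\mcL_{\tau,\mu}(\pi,\lambda)$ to be the \emph{unique} softmax policy associated with the soft $Q$-function of the reward $r_\lambda=r_0+\sum_{i=1}^m\lambda_i r_i$. Since $\mcL_{\tau,\mu}(\pi,\cdot)$ is affine-plus-quadratic (hence $C^1$) in $\lambda$ for every fixed $\pi$ and the maximizer is unique, Danskin's theorem gives
\begin{align*}
\nabla d_{\tau,\mu}(\lambda)=\nabla_\lambda\mcL_{\tau,\mu}(\pi^*_{\tau,\lambda},\lambda)=V^{\pi^*_{\tau,\lambda}}(\rho)-c+\mu\lambda.
\end{align*}
Consequently
\begin{align*}
\|\nabla d_{\tau,\mu}(\lambda)-\nabla d_{\tau,\mu}(\lambda')\|_2\le \|V^{\pi^*_{\tau,\lambda}}(\rho)-V^{\pi^*_{\tau,\lambda'}}(\rho)\|_2+\mu\|\lambda-\lambda'\|_2,
\end{align*}
so the $+\mu$ in $L_d$ is immediate and the whole task reduces to showing the value vector is $\tfrac{2R_{\max}^2L_\nu}{(1-\gamma)^2\tau}$-Lipschitz in $\lambda$.

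Next I would pass from values to occupancy measures. Using $V_i^\pi(\rho)=\tfrac{1}{1-\gamma}\langle\nu_\rho^\pi,r_i\rangle$ together with H\"older's inequality and $R_{\max}=\sqrt{\sum_i r_{i,\max}^2}$, summing the squared componentwise bounds yields
\begin{align*}
\|V^{\pi^*_{\tau,\lambda}}(\rho)-V^{\pi^*_{\tau,\lambda'}}(\rho)\|_2\le \tfrac{R_{\max}}{1-\gamma}\,\|\nu_\rho^{\pi^*_{\tau,\lambda}}-\nu_\rho^{\pi^*_{\tau,\lambda'}}\|_1.
\end{align*}
In parallel I would establish that the regularized optimal policy is Lipschitz in $\lambda$. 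Because $r_\lambda-r_{\lambda'}=\sum_i(\lambda_i-\lambda_i')r_i$, Cauchy--Schwarz gives $\|r_\lambda-r_{\lambda'}\|_\infty\le R_{\max}\|\lambda-\lambda'\|_2$; the soft Bellman operator being a $\gamma$-contraction then propagates this to the soft $Q$-functions, $\|Q^*_{\tau,\lambda}-Q^*_{\tau,\lambda'}\|_\infty\le\tfrac{1}{1-\gamma}\|r_\lambda-r_{\lambda'}\|_\infty$; and the $\tfrac{2}{\tau}$-Lipschitzness of the map $z\mapsto\mathrm{softmax}(z/\tau)$ ($\ell_1$ per state versus $\ell_\infty$ of the logits) gives $\max_{s}\|\pi^*_{\tau,\lambda}(\cdot|s)-\pi^*_{\tau,\lambda'}(\cdot|s)\|_1\le \tfrac{2R_{\max}}{(1-\gamma)\tau}\|\lambda-\lambda'\|_2$. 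This is precisely where the $1/\tau$ factor enters, and it is why entropy regularization is indispensable for smoothness.

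The remaining and hardest step is to show the occupancy measure is Lipschitz in the policy with constant $L_\nu$, i.e. $\|\nu_\rho^\pi-\nu_\rho^{\pi'}\|_1\le L_\nu\max_s\|\pi(\cdot|s)-\pi'(\cdot|s)\|_1$. Here I would invoke Mitrophanov's perturbation bound for uniformly ergodic Markov chains: the two policies induce state-transition kernels $P^\pi(\cdot|s)=\sum_a\pi(a|s)\mathsf P(\cdot|s,a)$ whose per-state total-variation difference is controlled by $\max_s\|\pi(\cdot|s)-\pi'(\cdot|s)\|_1$, and \Cref{assumption:ergodicity} supplies the mixing constants $C_M,\beta$ that turn the sensitivity bound into $L_\nu=\lceil\log_\beta(C_M^{-1})\rceil+(1-\beta)^{-1}+1$. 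The delicate point I expect to be the main obstacle is that $\nu_\rho^\pi$ is a \emph{discounted} occupancy measure rather than a stationary distribution, so Mitrophanov's theorem does not apply verbatim; I would reconcile this either by realizing $\nu_\rho^\pi$ as the stationary distribution of the augmented kernel $\gamma P^\pi+(1-\gamma)\mathbf 1\rho^\top$ (whose ergodicity is inherited from \Cref{assumption:ergodicity}), or by a direct telescoping/coupling estimate along the discounted horizon. Chaining the three resulting estimates (occupancy-to-value, policy-to-occupancy, and $\lambda$-to-policy) produces $\|V^{\pi^*_{\tau,\lambda}}(\rho)-V^{\pi^*_{\tau,\lambda'}}(\rho)\|_2\le\tfrac{2R_{\max}^2L_\nu}{(1-\gamma)^2\tau}\|\lambda-\lambda'\|_2$, and adding the $\mu\|\lambda-\lambda'\|_2$ term from the first step gives the claimed $L_d=\tfrac{2R_{\max}^2L_\nu}{(1-\gamma)^2\tau}+\mu$.
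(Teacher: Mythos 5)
Your proposal is correct and follows essentially the same route as the paper's proof: Danskin's theorem for the gradient formula, the occupancy-measure representation to reduce value Lipschitzness to $\|\nu_\rho^{\pi^*_{\tau,\lambda}}-\nu_\rho^{\pi^*_{\tau,\lambda'}}\|_1$, Mitrophanov's perturbation bound applied to exactly the augmented kernel $\gamma P^\pi+(1-\gamma)\mathbf{1}\rho^\top$ (the paper's Lemma 7, which realizes $\chi_\rho^\pi$ as that kernel's stationary distribution), and the $\tfrac{2}{\tau}$ softmax/soft-$Q$ Lipschitz argument (the paper's Lemma 8). The only deviations are cosmetic: you obtain the soft-$Q$ Lipschitz bound via the contraction fixed-point argument rather than the paper's maximum-over-policies bound, and your $\ell_\infty$-to-$\ell_1$ softmax Lipschitz statement is a clean (indeed slightly tidier) substitute for the paper's per-entry log-ratio comparison.
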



        \textit{\textbf{Step 2.}} 
        This step contains three major novel components: (a) construction of a weighted value function $(1-\alpha)^{T-1} V_0^{\pi_1}(\rho)+ \sum\nolimits_{t=2}^T \alpha(1-\alpha)^{T-t} V_0^{\pi_{t}}(\rho)$, which will ultimately yield the convergence of the optimality gap in Step 4; (b) construction of a weighted policy $\tilde \pi$ based on the updates of the dual variable $\{\underline \lambda_t\}$ in Algorithm \ref{alg:arcpo}, whose value function equals to the weighted value function constructed in (a) as shown in \Cref{prop:tildepi}; and (c) establishment of the connection between the optimality gap, the constraint violation and the updates of the dual variable $\{\lambda_t\}$ as presented in \Cref{prop:connection}. 

\begin{prop}\label{prop:tildepi}
Define the policy $\tilde{\pi}$, such that, for all $s\in\mcs$ and $a\in\mca$,
\begin{align}\label{eq:tildepi}
    \tilde{\pi} (a|s) = \frac{(1-\alpha)^{T-1} \nu_\rho^{\pi_1}(s,a) + \sum\nolimits_{t=2}^T \alpha(1-\alpha)^{T-t} \nu_\rho^{\pi_{t}}(s,a)}{(1-\alpha)^{T-1} \chi^{\pi_1}_\rho(s) + \sum\nolimits_{t=2}^T \alpha(1-\alpha)^{T-t} \chi_\rho^{\pi_{t}}(s)},
\end{align}
where $\chi_\rho^\pi (s) = \sum_{a^\prime\in\mca} \nu_\rho^\pi(s, a^\prime)$. Then for $i=0,...,m,$
\begin{align}
    V_i^{\tilde\pi}(\rho) =(1-\alpha)^{T-1} V_{i}^{\pi_1}(\rho)  + \sum_{t=2}^T \alpha(1-\alpha)^{T-t} V_{i}^{\pi_{t}}(\rho).
\end{align}
\end{prop}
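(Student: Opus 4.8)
The plan is to exploit the well-known linear correspondence between stationary policies and their discounted state-action visitation (occupancy) measures, together with the fact that each value function is a \emph{linear} functional of the occupancy measure. Write $w_1 := (1-\alpha)^{T-1}$ and $w_t := \alpha(1-\alpha)^{T-t}$ for $t = 2,\ldots,T$, and observe first that these weights are nonnegative and sum to one: since $\alpha\sum_{t=2}^T (1-\alpha)^{T-t} = \alpha \cdot \frac{1-(1-\alpha)^{T-1}}{\alpha} = 1 - (1-\alpha)^{T-1}$, we get $\sum_{t=1}^T w_t = 1$. Hence the mixture $\nu^{\mathrm{mix}}(s,a) := w_1\nu_\rho^{\pi_1}(s,a) + \sum_{t=2}^T w_t\,\nu_\rho^{\pi_t}(s,a)$ is a convex combination of valid occupancy measures, and the numerator and denominator defining $\tilde\pi$ are exactly $\nu^{\mathrm{mix}}(s,a)$ and its state marginal $\chi^{\mathrm{mix}}(s) := \sum_{a'} \nu^{\mathrm{mix}}(s,a') = \sum_t w_t \chi_\rho^{\pi_t}(s)$. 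Thus $\tilde\pi(a|s) = \nu^{\mathrm{mix}}(s,a)/\chi^{\mathrm{mix}}(s)$, and the proposition reduces to the single claim $\nu_\rho^{\tilde\pi} = \nu^{\mathrm{mix}}$.

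Next I would establish the state-marginal balance (flow) equation obeyed by any occupancy measure: for every policy $\pi$ and state $s$,
\[\chi_\rho^\pi(s) = (1-\gamma)\rho(s) + \gamma\sum_{s',a'} \mathsf{P}(s|s',a')\nu_\rho^\pi(s',a').\]
This follows from the definition of $\nu_\rho^\pi$ by writing $\nu_\rho^\pi(s,a) = (1-\gamma)\sum_{t\ge 0}\gamma^t d_t^\pi(s)\pi(a|s)$ with $d_t^\pi(s) = \prob\{s_t = s\}$, invoking the one-step recursion $d_{t+1}^\pi(s) = \sum_{s',a'}\mathsf{P}(s|s',a')\pi(a'|s')d_t^\pi(s')$, and reindexing the geometric sum. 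Because this identity is linear in $\nu_\rho^\pi$ and holds for each $\nu_\rho^{\pi_t}$, the convex combination $\nu^{\mathrm{mix}}$ satisfies the same balance equation with marginal $\chi^{\mathrm{mix}}$.

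I would then recover the inducing policy. Substituting $\tilde\pi(a|s) = \nu^{\mathrm{mix}}(s,a)/\chi^{\mathrm{mix}}(s)$ into the policy-indexed flow equation $\nu(s,a) = \tilde\pi(a|s)\big[(1-\gamma)\rho(s) + \gamma\sum_{s',a'}\mathsf{P}(s|s',a')\nu(s',a')\big]$, the bracketed factor evaluated at $\nu^{\mathrm{mix}}$ equals $\chi^{\mathrm{mix}}(s)$ by the balance equation of the previous step, so $\tilde\pi(a|s)\chi^{\mathrm{mix}}(s) = \nu^{\mathrm{mix}}(s,a)$ shows that $\nu^{\mathrm{mix}}$ solves this fixed-point equation. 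Since $(I - \gamma P^{\tilde\pi})$ is invertible for $\gamma\in(0,1)$, where $P^{\tilde\pi}$ is the state-to-state transition matrix under $\tilde\pi$, the flow equation has a unique solution, which is precisely $\nu_\rho^{\tilde\pi}$; hence $\nu_\rho^{\tilde\pi} = \nu^{\mathrm{mix}}$. The claim then follows from linearity of the value functional, $V_i^{\tilde\pi}(\rho) = \frac{1}{1-\gamma}\langle \nu_\rho^{\tilde\pi}, r_i\rangle = \frac{1}{1-\gamma}\sum_t w_t\langle\nu_\rho^{\pi_t}, r_i\rangle = \sum_t w_t V_i^{\pi_t}(\rho)$, which is exactly the asserted identity.

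The main obstacle is the occupancy-measure-to-policy correspondence itself: justifying uniqueness of the solution to the flow equation and handling degenerate states with $\chi^{\mathrm{mix}}(s) = 0$, where $\tilde\pi(\cdot|s)$ is a priori undefined. Such states satisfy $\chi_\rho^{\pi_t}(s) = 0$ for all $t$ (the weights are strictly positive and the summands nonnegative), and in particular $\rho(s) = 0$, so they carry zero occupancy and their policy may be set arbitrarily without affecting $\nu^{\mathrm{mix}}$ or any $V_i$; I would dispatch this as a separate degenerate case. The remaining pieces — the geometric-sum identity for the weights and the reindexing in the balance equation — are purely mechanical.
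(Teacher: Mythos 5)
Your proof is correct and follows essentially the same route as the paper: both arguments show that the convex combination of occupancy measures, paired with $\tilde\pi$, satisfies the stationary-policy flow (Bellman balance) equation and conclude $\nu_\rho^{\tilde\pi}=\nu^{\mathrm{mix}}$ by uniqueness of its solution (the paper via the $\gamma$-contraction $\mathcal{B}_\rho^{\tilde\pi}$ and Banach's fixed-point theorem, you via invertibility of $I-\gamma P^{\tilde\pi}$, which is the same fact), then finish by linearity of $V_i$ in the occupancy measure. The only cosmetic differences are that the paper packages the mixture as the occupancy measure of an auxiliary randomized-index policy $\pi_N$ whereas you work with the mixture directly, and that you explicitly handle the degenerate states with $\chi^{\mathrm{mix}}(s)=0$, which the paper leaves implicit.
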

\begin{prop}\label{prop:connection}
Consider \Cref{alg:arcpo}. Then the summation of the optimality gap and the constraint violation is bounded by the dual variable increments as follows. For any $\lambda \in \Lambda$,
\begin{align}\label{eq:8}
		&V_0^*(\rho)  - \left[(1-\alpha)^{T-1} V_0^{\pi_1}(\rho)  + \sum\nolimits_{t=2}^T \alpha(1-\alpha)^{T-t} (V_0^{\pi_{t}}(\rho) + \lambda^{\top} (c- {V}^{\pi_{t}}(\rho)))\right]  \nonumber\\
		\leq&  \left(\sum\nolimits_{t=1}^{T} \alpha(1-\alpha)^{T-t}\|\lambda_t-\lambda_{t-1}\|_2^2\right)^{\frac{1}{2}}\left(\tfrac{q}{\alpha}\tfrac{R_{\max}}{1-\gamma} + \tfrac{R_{\max}}{1-\gamma} + \tfrac{2\sqrt{m}B}{\eta}\right)\nonumber\\
		&+ \sum\nolimits_{t=1}^{T} (1-\alpha)^{T-t}\Delta_{t} +\tfrac{\tau}{1-\gamma} \log |\mca| +6\mu m B^2.
		\end{align}
\end{prop}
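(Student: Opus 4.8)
<br>

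The plan is to prove \Cref{prop:connection} by combining three ingredients: the zero-duality-gap structure of the regularized Lagrangian, the contraction property established through the Nesterov-accelerated dual update (Step 7 of \Cref{alg:arcpo}), and a telescoping/weighted-averaging argument over the iterates that matches the weights $(1-\alpha)^{T-t}$ appearing in the statement. The left-hand side is essentially the optimality gap of the weighted value function plus a Lagrangian penalty term evaluated at an arbitrary feasible $\lambda$, so the natural first move is to insert and subtract the optimal value $V_0^*(\rho)$ using strong duality for the unregularized problem together with the entropy/$\ell_2$ regularization bias, which will account for the $\frac{\tau}{1-\gamma}\log|\mca|$ and $6\mu m B^2$ terms at the end.

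First I would rewrite $V_0^*(\rho)$ via the saddle-point/weak-duality inequality: for any $\lambda\succeq 0$, $V_0^*(\rho)\le \max_{\pi}\mcL(\pi,\lambda)$, and then relate $\max_\pi \mcL(\pi,\lambda)$ to the regularized dual $d_{\tau,\mu}$ by absorbing the entropy term (bounded by $\frac{\tau}{1-\gamma}\log|\mca|$) and the $\frac{\mu}{2}\|\lambda\|_2^2$ term (bounded using $\lambda\in\Lambda$, giving the $6\mu m B^2$). Next, for each iterate I would exploit the first-order optimality condition of the $\lambda_t$-update in Step 7 — this is exactly the technical ingredient flagged as item (c) in the contributions. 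Writing the optimality condition of the proximal/projected step $\lambda_t=\argmin_{\lambda\in\Lambda}\{\eta[\langle\widehat\nabla d_{\tau,\mu}(\underline\lambda_t),\lambda\rangle+\frac{\mu}{2}\|\lambda-\underline\lambda_t\|_2^2]+\frac12\|\lambda-\lambda_{t-1}\|_2^2\}$ yields, for all $\lambda\in\Lambda$, a three-point inequality of the form $\langle \eta\widehat\nabla d_{\tau,\mu}(\underline\lambda_t)+\eta\mu(\lambda_t-\underline\lambda_t)+(\lambda_t-\lambda_{t-1}),\lambda-\lambda_t\rangle\ge 0$, which relates the gradient surrogate, the increment $\|\lambda_t-\lambda_{t-1}\|_2^2$, and the gap to an arbitrary comparator $\lambda$. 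Substituting $\widehat\nabla d_{\tau,\mu}(\underline\lambda_t)={V}^{\pi_t}(\rho)-c+\mu\underline\lambda_t$ converts this into a statement directly involving the constraint slacks ${V}^{\pi_t}(\rho)-c$, and the approximation error $\Delta_t$ captures the gap between the surrogate value $V_0^{\pi_t}(\rho)+\underline\lambda_t^\top({V}^{\pi_t}(\rho)-c)$ and the exact regularized inner maximum.

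I would then sum these per-iterate inequalities against the geometric weights $(1-\alpha)^{T-t}$, which is where the acceleration parameters $\alpha,q,\eta$ in \cref{stepsize} are designed to make cross terms telescope. The $\frac{q}{\alpha}\frac{R_{\max}}{1-\gamma}$ factor arises from controlling the difference between the search point $\underline\lambda_t=(1-q)\bar\lambda_{t-1}+q\lambda_{t-1}$ and the averaged point, using the boundedness $\|{V}^{\pi}(\rho)\|_2\le \frac{\sqrt{m}R_{\max}}{1-\gamma}$; the $\frac{2\sqrt m B}{\eta}$ factor comes from the $\frac1\eta\|\lambda-\lambda_{t-1}\|_2$-type terms bounded over $\Lambda$ (where $\|\lambda\|_2\le 2\sqrt m B$); and the Cauchy--Schwarz step over the weighted increments produces the square-root prefactor $(\sum_t\alpha(1-\alpha)^{T-t}\|\lambda_t-\lambda_{t-1}\|_2^2)^{1/2}$. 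The $\sum_t(1-\alpha)^{T-t}\Delta_t$ term collects the RegPO inexactness across iterates.

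The hard part will be the weighted telescoping in the accelerated setting: unlike standard Nesterov analysis for minimization, here the dual ascent is entangled with the primal weighted-value construction, so I expect the main obstacle to be carefully tracking the three interacting sequences $\{\lambda_t\},\{\bar\lambda_t\},\{\underline\lambda_t\}$ so that the potential-function decrease aligns with the weights $\alpha(1-\alpha)^{T-t}$ rather than the uniform weights of a textbook proof. Getting the $\frac q\alpha$ coefficient exactly right — rather than an uncontrolled blow-up — depends crucially on the specific choice $q=\frac{2\alpha-\mu/L_d}{2-\mu/L_d}$, and verifying that this choice makes the momentum cross-terms nonpositive is the delicate algebraic core of the argument.
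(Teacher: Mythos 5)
Your scaffolding matches the paper's proof in its ingredients — weak duality plus the regularization biases to bring in $V_0^*(\rho)$, the first-order optimality condition of the Step-7 update to handle the arbitrary comparator $\lambda\in\Lambda$ (this is exactly the inequality the paper uses, and it is indeed the source of the $\tfrac{2\sqrt{m}B}{\eta}$ coefficient and of $4\mu mB^2$ out of the $6\mu mB^2$), and geometric weighting followed by Cauchy--Schwarz for the square-root prefactor. However, there is a genuine gap at the center of your plan: nothing in it produces the weighted primal values $(1-\alpha)^{T-1}V_0^{\pi_1}(\rho)+\sum_{t=2}^T\alpha(1-\alpha)^{T-t}V_0^{\pi_t}(\rho)$, nor links them to any duality bound. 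The per-iterate inequalities you propose to sum are the first-order optimality conditions of the dual update, and these involve only the dual iterates and the constraint slacks ${V}^{\pi_t}(\rho)-c$; they contain no information whatsoever about the objective values $V_0^{\pi_t}(\rho)$. The missing mechanism is a recursion on the dual function evaluated at the search points: from lines 4 and 8 of \Cref{alg:arcpo}, $\underline\lambda_{t+1}=(1-\alpha)\underline\lambda_t+[(1-q)\alpha+q]\lambda_t-q(1-\alpha)\lambda_{t-1}$, an affine combination whose coefficients sum to one; since the Lagrangian is affine in $\lambda$ for fixed $\pi$, evaluating $d_\tau(\underline\lambda_{t+1})$ at the near-maximizer $\pi_{t+1}$ (incurring $\Delta_{t+1}$) and bounding the $(1-\alpha)$-weighted piece by $d_\tau(\underline\lambda_t)$ yields
\begin{align*}
d_{\tau}(\underline\lambda_{t+1})\le (1-\alpha)d_{\tau}(\underline\lambda_t)+[(1-q)\alpha+q]\big[V_0^{\pi_{t+1}}(\rho)+\lambda_t^\top({V}^{\pi_{t+1}}(\rho)-c)\big]-q(1-\alpha)\big[V_0^{\pi_{t+1}}(\rho)+\lambda_{t-1}^\top({V}^{\pi_{t+1}}(\rho)-c)\big]+\Delta_{t+1}.
\end{align*}
Unrolling this with weights $(1-\alpha)^{T-1-t}$ is precisely what generates the weighted values, the momentum cross terms, and the $\Delta_t$ sum, and it terminates at $d_\tau(\underline\lambda_T)$, to which the duality lower bound $d_\tau(\underline\lambda_T)\ge V_0^*(\rho)-\tfrac{\tau}{1-\gamma}\log|\mca|-2\mu mB^2$ is then applied. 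Without this recursion, your weighted summation has no left-hand side to bound, so the plan as stated would stall.

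A secondary error: you predict that the "delicate algebraic core" is verifying that the specific choice $q=\tfrac{2\alpha-\mu/L_d}{2-\mu/L_d}$ makes the momentum cross-terms nonpositive. That is not how this proposition works, and no such cancellation occurs. The cross terms $q(1-\alpha)(\lambda_{t-1}-\lambda_{t-2})^\top({V}^{\pi_t}(\rho)-c)$ are kept and simply bounded by Cauchy--Schwarz using $\|{V}^{\pi_t}(\rho)-c\|_2\le\tfrac{R_{\max}}{1-\gamma}$, which is exactly where the $\tfrac{q}{\alpha}\tfrac{R_{\max}}{1-\gamma}$ coefficient in the statement comes from; \Cref{prop:connection} and its proof hold for generic parameters. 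The specific stepsize relations in \eqref{stepsize} (e.g.\ $\tfrac{L_d(\alpha-q)}{1-q}\le\mu$ and $\tfrac{L_dq(1-\alpha)}{1-q}\le\tfrac{1}{2\eta}$) are needed only in \Cref{prop:keyconvergenceprop}, where the weighted sum of squared increments appearing on the right-hand side here is actually shown to decay.
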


		 The analysis indicates that the convergence rate of the distance between two consecutive iterates $\|\lambda_t-\lambda_{t-1}\|_2^2$ plays a central role in the overall convergence and complexity of the optimality gap and constraint violation of policy $\tilde \pi$, which we will bound in Step 3.
		 
		\textit{\textbf{Step 3.}} This step provides the convergence bound for the dual variable $\lambda$, which is updated via Nesterov's accelerated gradient descent. However, our analysis here is significantly different from the Nesterov's analysis. The reason is that the Nesterov's analysis would provide only the convergence in terms of the dual function gap for our problem, whereas for the {\bf constrained} MDP problem, our goal is the convergence guarantee in terms of the optimality gap and the constraint violation (see Definition \ref{def:gap}), which requires to bound the distance between two consecutive iterates of the dual variable due to \Cref{prop:connection}. To this end, in this step we develop the following proposition. 
\begin{prop}\label{prop:keyconvergenceprop}
	Let $(\underline{\lambda}_t,\lambda_t,\bar \lambda_t) \in \Lambda$ be generated by Algorithm \ref{alg:arcpo} and the parameters be chosen in \eqref{stepsize}.
	Then for any $\lambda \in \Lambda$,
	\begin{align}
	\sum_{t=1}^T \tfrac{\alpha (1-\alpha)^{T-t}}{4\eta}\|\lambda_t-\lambda_{t-1}\|_2^2 \leq (1-\alpha)^T K_0(\lambda) - K_T (\lambda)+ \sum_{t=1}^T \alpha (1-\alpha)^{T-t} \langle \delta_t, \lambda - \lambda_t \rangle,\label{convergence1}
	\end{align}
	where $K_t(\lambda) \coloneqq d_{\tau, \mu}(\bar \lambda_t)-d_{\tau, \mu}(\lambda)+\tfrac{\alpha}{2}(\mu+\tfrac{1}{\eta})\|\lambda_t-\lambda\|_2^2$, and $\delta_t \coloneqq\widehat \nabla d_{\tau,\mu}(\underline \lambda_t)-\nabla d_{\tau,\mu}(\underline \lambda_t)$ .
\end{prop}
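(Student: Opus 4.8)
The plan is to reduce the entire statement to a single per-step recursion and then unroll it. Concretely, I would aim to prove that for every $t$ and every $\lambda\in\Lambda$,
\[
K_t(\lambda) \le (1-\alpha)K_{t-1}(\lambda) - \tfrac{\alpha}{4\eta}\|\lambda_t-\lambda_{t-1}\|_2^2 + \alpha\langle\delta_t,\lambda-\lambda_t\rangle .
\]
Given this, applying it recursively from $t=1$ to $T$ yields $K_T(\lambda)\le (1-\alpha)^T K_0(\lambda) + \sum_{t=1}^T(1-\alpha)^{T-t}\big[-\tfrac{\alpha}{4\eta}\|\lambda_t-\lambda_{t-1}\|_2^2 + \alpha\langle\delta_t,\lambda-\lambda_t\rangle\big]$, and rearranging is exactly \eqref{convergence1}. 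So all the work lies in the one-step bound. Throughout, write $\phi:=d_{\tau,\mu}$, which is $\mu$-strongly convex (inherited from the $\tfrac{\mu}{2}\|\lambda\|_2^2$ term) and $L_d$-smooth by \Cref{prop:smoothness}, and set $g_t:=\nabla\phi(\underline\lambda_t)$, $\hat g_t:=\widehat\nabla\phi(\underline\lambda_t)=g_t+\delta_t$.

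I would assemble the one-step bound from three standard ingredients. First, the prox step defining $\lambda_t$ minimizes an $(\eta\mu+1)$-strongly convex objective over the convex set $\Lambda$; its first-order optimality gives the three-point inequality
\[
\langle\hat g_t,\lambda_t-\lambda\rangle \le \tfrac{\mu}{2}\|\lambda-\underline\lambda_t\|_2^2 - \tfrac{\mu}{2}\|\lambda_t-\underline\lambda_t\|_2^2 + \tfrac{1}{2\eta}\|\lambda-\lambda_{t-1}\|_2^2 - \tfrac{1}{2\eta}\|\lambda_t-\lambda_{t-1}\|_2^2 - \tfrac{\eta\mu+1}{2\eta}\|\lambda-\lambda_t\|_2^2 .
\]
Second, $L_d$-smoothness gives $\phi(\bar\lambda_t)\le\phi(\underline\lambda_t)+\langle g_t,\bar\lambda_t-\underline\lambda_t\rangle+\tfrac{L_d}{2}\|\bar\lambda_t-\underline\lambda_t\|_2^2$. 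Third, I would split $\bar\lambda_t-\underline\lambda_t=(1-\alpha)(\bar\lambda_{t-1}-\underline\lambda_t)+\alpha(\lambda_t-\underline\lambda_t)$ and bound $\langle g_t,\bar\lambda_{t-1}-\underline\lambda_t\rangle$ and $\langle g_t,\lambda-\underline\lambda_t\rangle$ from above by function-value differences via $\mu$-strong convexity. The key point is that the $\tfrac{\mu}{2}\|\lambda-\underline\lambda_t\|_2^2$ term produced by strong convexity cancels the identical term from the prox inequality; this cancellation is precisely where strong convexity buys the improved condition-number dependence.

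Combining the three ingredients, the $\phi(\underline\lambda_t)$ contributions cancel (since $1-(1-\alpha)-\alpha=0$), and after adding $\tfrac{\alpha}{2}(\mu+\tfrac1\eta)\|\lambda_t-\lambda\|_2^2$ to both sides to form $K_t(\lambda)$, two algebraic identities forced by \eqref{stepsize} finish the reduction. The choice $\eta=\tfrac{\alpha}{\mu(1-\alpha)}$ yields $(1-\alpha)\tfrac{\alpha}{2}(\mu+\tfrac1\eta)=\tfrac{\alpha}{2\eta}$, so the residual term $\tfrac{\alpha}{2\eta}\|\lambda-\lambda_{t-1}\|_2^2$ is exactly the strong-convexity term inside $(1-\alpha)K_{t-1}(\lambda)$. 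What remains is to show that the collected quadratic terms satisfy
\[
\tfrac{L_d}{2}\|\bar\lambda_t-\underline\lambda_t\|_2^2 - (1-\alpha)\tfrac{\mu}{2}\|\bar\lambda_{t-1}-\underline\lambda_t\|_2^2 - \tfrac{\alpha\mu}{2}\|\lambda_t-\underline\lambda_t\|_2^2 \le \tfrac{\alpha}{4\eta}\|\lambda_t-\lambda_{t-1}\|_2^2 .
\]
For this I would use the identity $\bar\lambda_t-\underline\lambda_t=-\alpha^2(\underline\lambda_t-\lambda_{t-1})+\alpha(\lambda_t-\lambda_{t-1})$, which follows from the direct computation $\tfrac{q-\alpha}{1-q}=-\tfrac{\mu}{2L_d}=-\alpha^2$ together with $\bar\lambda_{t-1}-\underline\lambda_t=q(\bar\lambda_{t-1}-\lambda_{t-1})$ and $\underline\lambda_t-\lambda_{t-1}=(1-q)(\bar\lambda_{t-1}-\lambda_{t-1})$.

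The main obstacle is this last quadratic inequality. Setting $v:=\bar\lambda_{t-1}-\lambda_{t-1}$, $w:=\lambda_t-\lambda_{t-1}$ and substituting $L_d=\mu/(2\alpha^2)$, it becomes a quadratic form $A\|w\|_2^2+B\langle w,v\rangle+C\|v\|_2^2\le 0$ with $A=-\tfrac{\alpha\mu}{4}<0$, $B=\tfrac{\mu\alpha(1-q)}{2}$, and $C$ collecting the $\|v\|_2^2$ coefficients; verifying it for all $v,w$ amounts to checking the negative-semidefiniteness condition $B^2\le 4AC$ of the associated $2\times2$ matrix (and $A<0$ then forces $C\le0$). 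The algebra collapses: the condition reduces to $-\tfrac{\alpha(1-\alpha)}{4}(1-q)^2\le\tfrac{(1-\alpha)q^2}{2}$, whose left side is nonpositive and right side nonnegative, so it holds automatically. With this in place the one-step recursion is established and telescoping completes the proof. The delicate aspect throughout is the exact bookkeeping of constants so that the three parameter relations $\alpha^2=\mu/(2L_d)$, the $\eta$-identity, and the $q$-identity line up cleanly; none of the individual steps is conceptually hard, but the accelerated $(1-\sqrt{\mu/L_d})$-type rate is entirely encoded in these cancellations, so any slack in the constants would destroy it.
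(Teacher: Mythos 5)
Your proposal is correct and follows essentially the same route as the paper's proof: both establish the one-step contraction $K_t(\lambda)\le(1-\alpha)K_{t-1}(\lambda)-\tfrac{\alpha}{4\eta}\|\lambda_t-\lambda_{t-1}\|_2^2+\alpha\langle\delta_t,\lambda-\lambda_t\rangle$ by combining the prox three-point inequality, $L_d$-smoothness at $\underline\lambda_t$, the split $\bar\lambda_t-\underline\lambda_t=(1-\alpha)(\bar\lambda_{t-1}-\underline\lambda_t)+\alpha(\lambda_t-\underline\lambda_t)$, and $\mu$-strong convexity of $d_{\tau,\mu}$, and then telescope with weights $(1-\alpha)^{T-t}$. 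The only difference is mechanical: the paper absorbs $\tfrac{L_d}{2}\|\bar\lambda_t-\underline\lambda_t\|_2^2$ by writing $\bar\lambda_t-\underline\lambda_t$ as $\alpha$ times a convex combination of $\lambda_t-\underline\lambda_t$ and $\lambda_t-\lambda_{t-1}$ and applying Jensen together with the parameter bounds $\tfrac{L_d(\alpha-q)}{1-q}\le\mu$ and $\tfrac{L_dq(1-\alpha)}{1-q}\le\tfrac{1}{2\eta}$, whereas you check negative semidefiniteness of the collected $2\times 2$ quadratic form via its discriminant; both verifications are valid and rest on the same parameter identities.
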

\Cref{prop:keyconvergenceprop} indicates that by introducing the $\ell_2$ regularization for the dual variable, the distance term $\sum\nolimits_{t=1}^{T} \alpha(1-\alpha)^{T-t}\|\lambda_t-\lambda_{t-1}\|_2^2$ converges linearly in terms of the error $K_0(\lambda^*_{\tau,\mu})$ introduced by the selection of the initial point $\lambda_0$. The gradient error $\delta_t$ can be made as small as possible via sufficient RegPO updates.

        
        \textit{\textbf{Step 4.}} This step characterizes the ultimate convergence in terms of the optimality gap and the constraint violation. 
        Substituting the bound in \cref{convergence1} into \cref{eq:8} yields that
        \begin{align}
		&V_0^*(\rho)  - \left[(1-\alpha)^{T-1} V_0^{\pi_1}(\rho)  + \sum\nolimits_{t=2}^T \alpha(1-\alpha)^{T-t} (V_0^{\pi_{t}}(\rho) + \lambda^{\top} ( c- {V}^{\pi_{t}}(\rho)))\right]  \nonumber\\
		\leq& \left(4\eta(1-\alpha)^T K_0(\lambda^*_{\tau,\mu}) + \sum\nolimits_{t=1}^T 4\eta\alpha (1-\alpha)^{T-t} \langle \delta_t, \lambda^*_{\tau,\mu} - \lambda_t \rangle\right)^{\frac{1}{2}}\left(\tfrac{q}{\alpha}\tfrac{R_{\max}}{1-\gamma} + \tfrac{R_{\max}}{1-\gamma} + \tfrac{2\sqrt{m}B}{\eta}\right)\nonumber\\
		&+ \sum\nolimits_{t=1}^{T} (1-\alpha)^{T-t}\Delta_{t} +\tfrac{\tau}{1-\gamma} \log |\mca| +6\mu m B^2\nonumber\\
		\leq& \left(2(1-\alpha)^{\frac{T}{2}} \sqrt{\eta K_0(\lambda^*_{\tau,\mu})} + \sqrt{8\eta\sqrt{m}B \max_{t=1,\ldots T} \|\delta_t\|_2}\right) \left(  \tfrac{2R_{\max}}{1-\gamma} + \tfrac{2\sqrt{m}B}{\eta}\right)\nonumber\\
		&+ \tfrac{\max_{t=1,\ldots T}  |\Delta_{t}|}{\alpha} +\tfrac{\tau}{1-\gamma} \log |\mca| +6\mu m B^2.\label{d_6}
		\end{align}
        
        The result of optimality gap follows directly by taking $\lambda=0$ in \cref{d_6}. 
		
		To further derive the bound on the constraint violation, we first add $(1-\alpha)^{T-1}\lambda^\top({c}-{V}^{\pi_1}(\rho) )$ on both sides of \cref{d_6}. By taking $\lambda_{i}= 2B$ if $({c}-{V}^{\tilde \pi}(\rho))_{i}> 0$, and taking $\lambda_{i}  = 0$ otherwise, we have
		\begin{align}
		V_0^*(\rho)-&V^{\tilde \pi}_0(\rho) + 2B \mathbf{1}^\top ({c}-{V}^{\tilde \pi}(\rho))_+\nonumber\\
		\leq&\left(2(1-\alpha)^{\frac{T}{2}} \sqrt{\eta K_0(\lambda^*_{\tau,\mu})} + \sqrt{8\eta\sqrt{m}B \max_{t=1,\ldots T} \|\delta_t\|_2}\right) \left( \tfrac{2R_{\max}}{1-\gamma} + \tfrac{2\sqrt{m}B}{\eta}\right)\nonumber\\
		&+ \tfrac{\max_{t=1,\ldots T}  |\Delta_{t}|}{\alpha} +\tfrac{\tau}{1-\gamma}\log |\mca| +6\mu m B^2 + 2(1-\alpha)^{T-1}\sqrt{m}B\tfrac{R_{\max}}{1-\gamma}.\label{eq:final}
		\end{align}
		
		Then we utilize the following proposition to bound the constraint violation.
		\begin{prop} \label{prop:constraintviolation}
    Given a policy $\tilde\pi$, suppose 
    $V^{*}_0(\rho) - V^{\tilde\pi}_0(\rho) + \langle\lambda, ({c} - V^{\tilde\pi})_+\rangle \le \sigma,$
    where $\lambda\in\mR^m_+$ satisfying $\lambda\succeq\lambda^* +B \mathbf{1}$. Then, we have $\|({c} - V^{\tilde\pi})_+\|_1\le \frac{\sigma}{B}$.
\end{prop}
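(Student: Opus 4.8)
The plan is to invoke strong duality for the CMDP, which holds under the Slater condition (\Cref{assumption:slatercondition}) and is established in \citep{paternain2019constrained}. Writing $\lambda^*$ for an optimal dual solution of the unregularized problem and $d(\lambda) = \max_{\pi\in\Pi}\mcL(\pi,\lambda)$, zero duality gap gives $V_0^*(\rho) = d(\lambda^*) = \max_{\pi\in\Pi}\mcL(\pi,\lambda^*)$. Since $\tilde\pi$ is a feasible competitor in this inner maximization, we get $\mcL(\tilde\pi,\lambda^*)\le V_0^*(\rho)$, which rearranges to
\[
V_0^*(\rho) - V_0^{\tilde \pi}(\rho) \ge \langle\lambda^*, V^{\tilde \pi}(\rho) - c\rangle = -\langle\lambda^*, c - V^{\tilde \pi}(\rho)\rangle .
\]
This lower bound on the optimality gap in terms of $\lambda^*$ and the signed constraint slack is the crux of the whole argument.

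The next step is to pass from the signed slack $c - V^{\tilde \pi}(\rho)$ to its rectified version. Since $(c - V^{\tilde \pi}(\rho))_+ \succeq c - V^{\tilde \pi}(\rho)$ entrywise and $\lambda^* \succeq 0$, we have $\langle\lambda^*, (c - V^{\tilde \pi}(\rho))_+\rangle \ge \langle\lambda^*, c - V^{\tilde \pi}(\rho)\rangle$, and therefore $V_0^*(\rho) - V_0^{\tilde \pi}(\rho) \ge -\langle\lambda^*, (c - V^{\tilde \pi}(\rho))_+\rangle$. Substituting this into the hypothesis of the proposition yields
\[
\sigma \ge V_0^*(\rho) - V_0^{\tilde \pi}(\rho) + \langle\lambda, (c - V^{\tilde \pi}(\rho))_+\rangle \ge \langle\lambda - \lambda^*, (c - V^{\tilde \pi}(\rho))_+\rangle .
\]

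Finally I would use the domination hypothesis $\lambda \succeq \lambda^* + B\mathbf{1}$, i.e. $\lambda - \lambda^* \succeq B\mathbf{1}$, together with the nonnegativity $(c - V^{\tilde \pi}(\rho))_+ \succeq 0$, to bound $\langle\lambda - \lambda^*, (c - V^{\tilde \pi}(\rho))_+\rangle \ge B\langle\mathbf{1}, (c - V^{\tilde \pi}(\rho))_+\rangle = B\|(c - V^{\tilde \pi}(\rho))_+\|_1$, where the last equality just uses that the rectified vector is nonnegative. Chaining the two displays gives $B\|(c - V^{\tilde \pi}(\rho))_+\|_1 \le \sigma$, which is exactly the claim.

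The only nontrivial ingredient is the strong-duality identity $V_0^*(\rho) = \max_{\pi\in\Pi}\mcL(\pi,\lambda^*)$ used in the first step; everything afterward is elementary manipulation of the rectifier $(\cdot)_+$ and the order relation $\succeq$. Accordingly, I expect the main point requiring care to be the sign bookkeeping (the CMDP is a maximization, so the dual relaxation and the direction of the saddle-point inequality must be tracked precisely) rather than any genuine analytic difficulty.
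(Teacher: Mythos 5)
Your proof is correct, and it reaches the paper's pivotal intermediate inequality by a more direct route. Both arguments turn on the same bound, $V_0^*(\rho) - V_0^{\tilde\pi}(\rho) \ge -\langle \lambda^*, (c - V^{\tilde\pi}(\rho))_+\rangle$, and both finish identically: combine it with the hypothesis to get $\langle \lambda - \lambda^*, (c - V^{\tilde\pi}(\rho))_+\rangle \le \sigma$, then invoke $\lambda \succeq \lambda^* + B\mathbf{1}$ together with nonnegativity of the rectified slack. The difference is how that bound is obtained. The paper introduces the perturbation function $f(\zeta) = \max\{V_0^{\pi}(\rho) : V_{i}^{\pi}(\rho) \ge c_i + \zeta_i\}$, proves the sensitivity inequality $f(\zeta) \le f(0) - \langle \zeta, \lambda^*\rangle$ from the saddle-point property of $(\pi^*,\lambda^*)$, and evaluates it at $\zeta = -(c - V^{\tilde\pi}(\rho))_+$, for which $\tilde\pi$ is feasible; you instead apply zero duality gap directly, $\mcL(\tilde\pi, \lambda^*) \le \max_{\pi\in\Pi}\mcL(\pi, \lambda^*) = d(\lambda^*) = V_0^*(\rho)$, and then pass from the signed slack to its rectification using $\lambda^* \succeq 0$. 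The two derivations rest on exactly the same external facts — zero duality gap under the Slater condition \citep{paternain2019constrained} and existence/boundedness of the optimal dual variable $\lambda^*$ (the paper's Lemma~\ref{lemma:1}) — but your version is shorter and skips the perturbation-function machinery entirely (including the concavity of $f$, which the paper cites but does not actually use in its chain of inequalities), whereas the paper's version packages the step as a standard, reusable sensitivity bound. Either proof is acceptable; the only point to state explicitly in yours is that $\tilde\pi\in\Pi$ so it is a legitimate competitor in the inner maximization defining $d(\lambda^*)$.
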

Together with \Cref{prop:constraintviolation}, \cref{eq:final}  implies the bound on the constraint violation.
		\begin{align*}
		\|({c}-{V}^{\tilde \pi}(\rho))_+\|_1 \leq& \left(2\left(1-\sqrt{\tfrac{\mu}{2L_d}}\right)^{\frac{T}{2}} \sqrt{\eta K_0(\lambda^*_{\tau,\mu})} + \sqrt{8\eta\sqrt{m}B \max_{t=1,\ldots T} \|\delta_t\|_2}\right) \left(\tfrac{2R_{\max}}{(1-\gamma)B} + \tfrac{2\sqrt{m}}{\eta}\right)\\
		&+ \sqrt{\tfrac{2L_d}{\mu}}\tfrac{\max_{t=1,\ldots T}  |\Delta_{t}|}{B} +\tfrac{\tau \log |\mca|}{(1-\gamma)B} +6\mu m B + 2\left(1-\sqrt{\tfrac{\mu}{2L_d}}\right)^{T-1}\tfrac{\sqrt{m}R_{\max}}{1-\gamma}.
		\end{align*}
		The complete proof of Propositions \ref{prop:smoothness}, \ref{prop:tildepi}, \ref{prop:connection}, \ref{prop:keyconvergenceprop} and \ref{prop:constraintviolation} are provided in \Cref{section:proofofproposition}.

\subsection{Supporting Lemmas}\label{section:supportinglemmas}
We first introduce three useful lemmas. They are quite explicit and thus we omit the proof. 

\begin{newlemma}\label{lemma:logfunction}
Given $x,y \in (0, 1)$, we have $|x- y| \le |\log(x) - \log(y)|$.
\end{newlemma}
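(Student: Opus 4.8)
The statement to prove is Lemma~\ref{lemma:logfunction}: for $x,y\in(0,1)$, we have $|x-y|\le|\log(x)-\log(y)|$. The plan is to reduce this to a one-variable inequality via the mean value theorem applied to the logarithm.

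First I would assume without loss of generality that $x > y$, since both sides of the inequality are symmetric in $x$ and $y$ (swapping them negates the quantity inside each absolute value but leaves the absolute values unchanged); the case $x=y$ is trivial as both sides vanish. Under this assumption $\log(x)-\log(y) > 0$ as well, so I may drop the absolute value bars and aim to show $x - y \le \log(x) - \log(y)$.

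Next I would apply the mean value theorem to $\log$ on the interval $[y,x]$: there exists $\theta\in(y,x)\subseteq(0,1)$ such that
\begin{align*}
\log(x)-\log(y) = \frac{1}{\theta}\,(x-y).
\end{align*}
Since $\theta\in(0,1)$, we have $1/\theta > 1$, and because $x-y>0$ this immediately gives $\log(x)-\log(y) = \frac{1}{\theta}(x-y) \ge x-y$, which is exactly the desired bound. Restoring the absolute values and the general (unordered) case completes the argument.

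There is essentially no hard part here — the lemma is elementary, which is presumably why the paper states that its proof is omitted. The only point requiring a little care is the reduction to the ordered case and the observation that the mean-value point $\theta$ stays strictly inside $(0,1)$ so that $1/\theta>1$; both are immediate. An equivalent route, if one prefers to avoid invoking the mean value theorem, is to define $g(t)=\log(t)-t$ and note $g'(t)=\frac1t-1>0$ on $(0,1)$, so $g$ is strictly increasing there; then $x>y$ gives $\log(x)-x > \log(y)-y$, i.e.\ $\log(x)-\log(y) > x-y$, yielding the same conclusion.
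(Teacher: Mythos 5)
Your proof is correct: the reduction to the ordered case, the mean value theorem giving $\log(x)-\log(y)=\tfrac{1}{\theta}(x-y)$ with $\theta\in(0,1)$, and hence $\tfrac{1}{\theta}>1$, establishes the claim. The paper itself omits the proof of this lemma (declaring it and its companions ``quite explicit''), so there is no authorial argument to compare against; your argument (and the equivalent monotonicity route via $g(t)=\log(t)-t$) validly supplies the missing details.
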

\begin{newlemma}\label{lemma:expfunction}
    For any two vectors $x,y\in\mR^m$, $\left|\log\left(\|\exp(x)\|_1\right) - \log\left(\|\exp(y)\|_1\right)\right| \le \|x -y\|_\infty.$ 
\end{newlemma}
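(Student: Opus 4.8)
The plan is to prove this via an elementary entrywise comparison, exploiting only the monotonicity of the exponential and of the logarithm. First I would unpack the notation: since every entry of $\exp(x)$ is positive, we have $\|\exp(x)\|_1 = \sum_{i=1}^m e^{x_i}$, so $\log\|\exp(x)\|_1$ is exactly the log-sum-exp function. The claim is therefore that log-sum-exp is $1$-Lipschitz with respect to the $\ell_\infty$ norm, and I would prove the two one-sided bounds separately.

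For the forward direction, I would start from the uniform coordinatewise bound $x_i \le y_i + \|x-y\|_\infty$, which holds for every $i$ by definition of the sup-norm. Applying the monotone exponential gives $e^{x_i}\le e^{\|x-y\|_\infty}\,e^{y_i}$, and because the factor $e^{\|x-y\|_\infty}$ does not depend on $i$, it can be pulled out of the sum: $\sum_{i=1}^m e^{x_i}\le e^{\|x-y\|_\infty}\sum_{i=1}^m e^{y_i}$, i.e. $\|\exp(x)\|_1 \le e^{\|x-y\|_\infty}\|\exp(y)\|_1$. Taking logarithms (again monotone) yields $\log\|\exp(x)\|_1 - \log\|\exp(y)\|_1 \le \|x-y\|_\infty$. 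The reverse inequality follows immediately by swapping the roles of $x$ and $y$ and using $\|y-x\|_\infty = \|x-y\|_\infty$, and combining the two one-sided bounds gives the absolute-value statement.

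There is essentially no obstacle here; the only point requiring a little care is keeping the sup-norm bound uniform across coordinates so that the common factor $e^{\|x-y\|_\infty}$ can legitimately be extracted before summing and taking the log. As an alternative I could instead argue via the gradient: the gradient of $\log\sum_i e^{x_i}$ is the softmax vector, whose entries are nonnegative and sum to $1$, so its $\ell_1$ norm is identically $1$; integrating along the segment joining $y$ to $x$ and applying Hölder's inequality $|\nabla g^\top(x-y)|\le\|\nabla g\|_1\|x-y\|_\infty$ then gives the same Lipschitz bound. I would present the elementary summation argument as the main proof since it avoids differentiability considerations and matches the paper's remark that the lemma is explicit.
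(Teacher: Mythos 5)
Your proof is correct. Note that the paper itself omits the proof of this lemma (it is listed among the supporting lemmas stated to be ``quite explicit''), so there is no written argument to compare against; your elementary summation argument --- bounding $e^{x_i} \le e^{\|x-y\|_\infty} e^{y_i}$ coordinatewise, pulling out the common factor, and taking logarithms, with the reverse direction by symmetry --- is precisely the standard short proof the authors presumably had in mind, and it is complete as written.
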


\begin{newlemma}\label{lemma:maximumfunction}
Suppose we have two functions $f$ and $g$: $\mcx\rightarrow\mR$, where $\mcx$ could be a finite set or a subset of $\mR^m$ for some $m\in\mathbb{N}$. Then, we have 
\begin{equation*}
    \left|\sup_{x\in \mcx} \{f(x)\} - \sup_{x\in \mcx} \{g(x)\}\right| \le \sup_{x\in\mcx} \left\{|f(x) - g(x)|\right\}.
\end{equation*}
\end{newlemma}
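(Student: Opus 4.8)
The plan is to exploit the fact that taking a supremum is a non-expansive operation with respect to the uniform distance between $f$ and $g$, which is exactly what the stated inequality asserts. First I would reduce the two-sided bound to a single one-sided estimate: it suffices to prove $\sup_{x\in\mcx} f(x) - \sup_{x\in\mcx} g(x) \le \sup_{x\in\mcx} |f(x)-g(x)|$, because the reverse inequality follows verbatim after swapping the roles of $f$ and $g$ (the right-hand side is symmetric in $f$ and $g$), and combining the two one-sided estimates yields the absolute-value bound.

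To establish the one-sided estimate, I would set $M := \sup_{x\in\mcx}|f(x)-g(x)|$, dispose of the trivial case $M=+\infty$, and otherwise argue pointwise: for every $x\in\mcx$ we have $f(x) = g(x) + (f(x)-g(x)) \le g(x) + M \le \sup_{x'\in\mcx} g(x') + M$. Since the right-hand side no longer depends on $x$, taking the supremum over $x\in\mcx$ on the left gives $\sup_{x\in\mcx} f(x) \le \sup_{x'\in\mcx} g(x') + M$, which is precisely the desired one-sided bound after rearranging.

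I do not expect any genuine obstacle, as this is a textbook property of suprema; the only care required is in the degenerate cases. In the setting where the lemma is applied the ambient set is finite (the tabular state-action space), so every supremum is attained and finite and no subtlety arises. For an arbitrary subset of $\mR^m$ one should simply note that the pointwise inequality $f(x)\le \sup_{x'} g(x') + M$ remains valid even when the individual suprema are infinite, so the manipulation above goes through unchanged; this confirms the inequality in full generality.
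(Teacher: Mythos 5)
Your proof is correct: the one-sided pointwise estimate $f(x)\le g(x)+M\le\sup_{x'}g(x')+M$ followed by symmetry in $f$ and $g$ is exactly the standard argument, and your handling of the degenerate cases is fine. The paper itself omits the proof of this lemma entirely (it declares the supporting lemmas ``quite explicit''), so your write-up simply supplies the textbook argument the paper implicitly relies on.
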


The following lemma \citep[Lemma 3.5]{lan2020first}, often referred to as the ``three-point lemma", characterizes the property of one step update in Line 7 of \Cref{alg:arcpo}.
\begin{newlemma}\label{lemma:threepointlemma}
	Let $(\underline{\lambda}_t,\lambda_t,\bar \lambda_t) \in \Lambda$ be generated by Algorithm \ref{alg:arcpo}, then for any $\lambda\in \Lambda$, we have
	\begin{align}
	\eta \langle \widehat \nabla d_{\tau,\mu}(\underline{\lambda}_t), \lambda_t -\lambda \rangle + \tfrac{\eta \mu}{2}\|\lambda_t-\underline \lambda_{t}\|_2^2  + \tfrac{1}{2} \|\lambda_t-\lambda_{t-1}\|_2^2 \leq \tfrac{\eta \mu}{2}\|\lambda-\underline \lambda_{t}\|_2^2  + \tfrac{1}{2} \|\lambda-\lambda_{t-1}\|_2^2.\label{3p}
	\end{align}
\end{newlemma}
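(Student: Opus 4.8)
This is a standard prox-type ``three-point'' inequality, and the plan is to derive it directly from the first-order optimality condition of the subproblem in Line~7 of \Cref{alg:arcpo}. Observe that $\lambda_t$ is the minimizer over the closed convex set $\Lambda$ of the function
\[
\phi(\lambda) := \eta\langle \widehat\nabla d_{\tau,\mu}(\underline\lambda_t),\lambda \rangle + \tfrac{\eta\mu}{2}\|\lambda - \underline \lambda_t\|_2^2 + \tfrac{1}{2}\|\lambda -\lambda_{t-1}\|_2^2,
\]
where $\widehat\nabla d_{\tau,\mu}(\underline\lambda_t)$ is a fixed vector (computed in Line~6 and independent of $\lambda$). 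Since $\phi$ is convex and differentiable, its constrained minimizer $\lambda_t$ satisfies the variational inequality $\langle \nabla\phi(\lambda_t), \lambda - \lambda_t\rangle \ge 0$ for all $\lambda\in\Lambda$. Computing $\nabla\phi(\lambda_t) = \eta\,\widehat\nabla d_{\tau,\mu}(\underline\lambda_t) + \eta\mu(\lambda_t - \underline\lambda_t) + (\lambda_t - \lambda_{t-1})$ and rearranging, I obtain for every $\lambda\in\Lambda$,
\[
\eta\langle \widehat\nabla d_{\tau,\mu}(\underline\lambda_t), \lambda_t - \lambda\rangle \le \eta\mu\langle \lambda_t - \underline\lambda_t,\, \lambda - \lambda_t\rangle + \langle \lambda_t - \lambda_{t-1},\, \lambda - \lambda_t\rangle.
\]

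The second step converts the two inner products on the right into squared norms via the polarization identity $\langle a - b,\, c - a\rangle = \tfrac{1}{2}\|c - b\|_2^2 - \tfrac{1}{2}\|a - b\|_2^2 - \tfrac{1}{2}\|c - a\|_2^2$. I would apply it with anchor $b = \underline\lambda_t$ to the first inner product and with anchor $b = \lambda_{t-1}$ to the second (in both cases $a = \lambda_t$, $c = \lambda$), which rewrites the bound as a combination of the squared distances $\|\lambda - \underline\lambda_t\|_2^2$, $\|\lambda_t - \underline\lambda_t\|_2^2$, $\|\lambda - \lambda_{t-1}\|_2^2$, $\|\lambda_t - \lambda_{t-1}\|_2^2$, and $\|\lambda - \lambda_t\|_2^2$.

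Finally, I would move $\tfrac{\eta\mu}{2}\|\lambda_t - \underline\lambda_t\|_2^2$ and $\tfrac{1}{2}\|\lambda_t - \lambda_{t-1}\|_2^2$ to the left-hand side, leaving $\tfrac{\eta\mu}{2}\|\lambda - \underline\lambda_t\|_2^2 + \tfrac{1}{2}\|\lambda - \lambda_{t-1}\|_2^2$ on the right together with a residual $-\big(\tfrac{\eta\mu}{2}+\tfrac12\big)\|\lambda-\lambda_t\|_2^2$. Since this residual is nonpositive, it may simply be discarded to yield exactly \eqref{3p}. The argument is entirely elementary; the only point requiring care is the bookkeeping of the polarization identity — in particular, choosing the correct anchor ($\underline\lambda_t$ versus $\lambda_{t-1}$) for each term and tracking signs so that the cross terms cancel and the leftover is precisely the nonpositive $\|\lambda-\lambda_t\|_2^2$ contribution. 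I note in passing that retaining this discarded term would give a slightly sharper inequality reflecting the $(1+\eta\mu)$-strong convexity of $\phi$, but the stated form is what the downstream analysis requires.
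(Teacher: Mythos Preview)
Your proof is correct and is precisely the standard derivation of the three-point inequality. The paper does not supply its own proof of this lemma but simply cites Lemma~3.5 of \cite{lan2020first}, whose argument is the same first-order optimality condition plus polarization-identity computation you carried out.
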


\begin{newlemma}\label{lemma:1}
Suppose \Cref{assumption:slatercondition} holds. Let $\lambda^*$ be the minimizer of the dual function. Then \[\|\lambda^*\|_1\le B\coloneqq\tfrac{r_{0,\max}}{(1-\gamma)\xi}.\]
\end{newlemma}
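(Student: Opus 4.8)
The plan is to bound $\|\lambda^*\|_1$ by sandwiching the optimal dual value $d(\lambda^*)$ between a trivial upper bound and a Slater-based lower bound, where $d(\lambda)=\max_{\pi\in\Pi}\mcL(\pi,\lambda)$ is the (unregularized) dual function. As a preliminary I would record the two elementary facts about the objective value function: since each reward satisfies $r_0(s,a)\in[0,r_{0,\max}]$, every policy obeys $0\le V_0^\pi(\rho)\le \tfrac{r_{0,\max}}{1-\gamma}$, the upper bound being the geometric series $\sum_{t\ge 0}\gamma^t r_{0,\max}$ and the lower bound following from reward positivity.

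For the upper bound, I would use only that $\lambda^*$ minimizes $d$ over $\mR_+^m$: evaluating at the feasible point $\lambda=0$ gives $d(\lambda^*)\le d(0)=\max_{\pi\in\Pi}V_0^\pi(\rho)\le \tfrac{r_{0,\max}}{1-\gamma}$. I deliberately avoid invoking strong duality here; only optimality of $\lambda^*$ together with membership of $0$ in the dual domain is needed.

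For the lower bound I would invoke \Cref{assumption:slatercondition}. Let $\pi_\xi$ be the strictly feasible policy, so that $V_i^{\pi_\xi}(\rho)-c_i\ge \xi>0$ for all $i$. Since $d(\lambda^*)=\max_{\pi\in\Pi}\mcL(\pi,\lambda^*)\ge \mcL(\pi_\xi,\lambda^*)=V_0^{\pi_\xi}(\rho)+\langle\lambda^*,\,V^{\pi_\xi}(\rho)-c\rangle$, and since each $\lambda_i^*\ge 0$ with $V_i^{\pi_\xi}(\rho)-c_i\ge\xi$, I would lower-bound the inner product by $\xi\sum_{i=1}^m\lambda_i^*=\xi\|\lambda^*\|_1$; combined with $V_0^{\pi_\xi}(\rho)\ge 0$ this yields $d(\lambda^*)\ge \xi\|\lambda^*\|_1$.

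Chaining the two bounds gives $\xi\|\lambda^*\|_1\le d(\lambda^*)\le \tfrac{r_{0,\max}}{1-\gamma}$, hence $\|\lambda^*\|_1\le \tfrac{r_{0,\max}}{(1-\gamma)\xi}=B$, as claimed. There is no serious obstacle: this is a textbook Slater bound on the dual optimum. The only points requiring care are the sign conventions—because the problem is a maximization with $\ge$ constraints, the term $\langle\lambda^*,V^{\pi_\xi}(\rho)-c\rangle$ is nonnegative and thus reinforces the lower bound rather than weakening it—and confirming reward positivity so that $V_0^{\pi_\xi}(\rho)\ge 0$.
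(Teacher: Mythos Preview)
Your proof is correct and follows essentially the same route as the paper: both arguments lower-bound $d(\lambda^*)$ via the Slater point $\pi_\xi$ to obtain $d(\lambda^*)\ge \xi\|\lambda^*\|_1$, and upper-bound it by $\tfrac{r_{0,\max}}{1-\gamma}$. The only difference is in how the upper bound is obtained: the paper invokes strong duality to write $d(\lambda^*)=V_0^{\pi^*}(\rho)\le \tfrac{r_{0,\max}}{1-\gamma}$, whereas you use the more elementary observation $d(\lambda^*)\le d(0)=\max_\pi V_0^\pi(\rho)$. Your version avoids the appeal to zero duality gap, which is a small but genuine simplification.
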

\begin{proof} By the definition of $d(\lambda)$, we have that
\begin{align}\label{eq:dualfunctionbound}
    d(\lambda)\ge V_0^{\pi_\xi}(\rho) + \langle \lambda, V^{\pi_\xi}(\rho) - \cb \rangle \overset{(i)}\ge \xi\|\lambda\|_1,
\end{align}
where $(i)$ follows from the fact that $V_0^{\pi_\xi}\ge 0$ and \Cref{assumption:slatercondition}.
Moreover, constrained reinforcement learning has no duality gap \citep{paternain2019constrained}. Combining this with the fact that the optimal function value is bounded, we conclude that the optimal dual variable exists in $\mR_+^m$. Taking $\lambda=\lambda^*$ in \cref{eq:dualfunctionbound} and using the fact that $d(\lambda^*) = V_{0}^{\pi^*}(\rho)\le \tfrac{r_{0,\max}}{1-\gamma}$, we complete the proof.
\end{proof}

\begin{newlemma}\label{lemma_nu_lipschitz}
Under \Cref{assumption:ergodicity}, for any $ \lambda, \lambda' \in \mathbb{R}_+^m$, and the corresponding optimal policy $\pi^*_{\tau,\lambda}$ and $\pi^*_{\tau,\lambda'}$, we have
\begin{equation*}
\left\|\nu^{\pi^*_{\tau,\lambda}}_\rho - \nu^{\pi^*_{\tau,\lambda'}}_\rho\right\|_1 = 2\dtv\left(\nu^{\pi^*_{\tau,\lambda}}_\rho, \nu^{\pi^*_{\tau,\lambda'}}_\rho\right)\le L_{\nu}\max_{s\in \mcs}\left\|\pi^*_{\tau,\lambda}(\cdot|s)-\pi^*_{\tau,\lambda'}(\cdot|s)\right\|_1,
\end{equation*}
where $L_{\nu}\coloneqq \lceil \log_{\beta}(C_M^{-1})\rceil + (1-\beta)^{-1} + 1$.
\end{newlemma}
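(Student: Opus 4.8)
\textbf{Proof proposal for Lemma~\ref{lemma_nu_lipschitz}.}
The plan is to bound the total variation distance between the two visitation measures by relating the discounted visitation distribution to the stationary distribution of an associated Markov chain, and then invoking Mitrophanov's perturbation bound to control the sensitivity of the visitation measure to changes in the transition kernel induced by the policy. First I would observe that a fixed policy $\pi$ combined with the transition kernel $\mathsf{P}$ induces a Markov chain on $\mcs\times\mca$ whose one-step transition probabilities depend on $\pi$ only through the factor $\pi(a'|s')$. Hence the two policies $\pi^*_{\tau,\lambda}$ and $\pi^*_{\tau,\lambda'}$ induce two Markov chains whose transition matrices, call them $P$ and $P'$, differ in a way controlled entrywise by $\max_{s}\|\pi^*_{\tau,\lambda}(\cdot|s)-\pi^*_{\tau,\lambda'}(\cdot|s)\|_1$. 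The key is that $\nu^{\pi}_\rho$ is (up to the $(1-\gamma)$ discounting) the expected occupation measure of this chain started from $\rho$, so that differences in the occupation measures are driven by differences in $P$ versus $P'$.

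The main tool is Mitrophanov's perturbation bound \citep{mitrophanov2005sensitivity}, which states that for a uniformly ergodic chain satisfying the mixing rate in \Cref{assumption:ergodicity} (with constants $C_M$ and $\beta$), the stationary distributions of two chains $P$ and $P'$ obey a bound of the form $\dtv(\chi_P,\chi_{P'}) \le (\lceil \log_\beta(C_M^{-1})\rceil + (1-\beta)^{-1})\,\|P-P'\|$, where the norm measures the worst-case row-wise total variation of the transition kernels. I would apply this bound after expressing the visitation-measure difference in terms of the transition-kernel difference, and then bound $\|P-P'\|$ by $\max_{s}\|\pi^*_{\tau,\lambda}(\cdot|s)-\pi^*_{\tau,\lambda'}(\cdot|s)\|_1$, since the kernels differ only through the policy factor and the transition probabilities $\mathsf{P}$ are common. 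The additive constant $+1$ in $L_\nu$ presumably absorbs the discrepancy between the discounted visitation measure $\nu_\rho^\pi$ and the true stationary distribution $\chi_\pi$, together with the contribution of the initial distribution $\rho$ through the finite-horizon (discounted) transient terms.

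Concretely, the key steps in order would be: (1) write $\nu^{\pi}_\rho = (1-\gamma)\sum_{t=0}^\infty \gamma^t \rho^\top P^t$ (viewing $\rho$ as lifted to $\mcs\times\mca$ via the policy), so the visitation measure is an explicit function of the transition matrix $P$; (2) decompose the difference $\nu^{\pi^*_{\tau,\lambda}}_\rho - \nu^{\pi^*_{\tau,\lambda'}}_\rho$ using a telescoping identity $P^t - (P')^t = \sum_{k} (P')^{k}(P-P')P^{t-1-k}$; (3) bound each term using the uniform ergodicity to control the mixing of $P^t$ towards its stationary distribution, which is exactly where Mitrophanov's bound enters to replace the crude $\sum_t \gamma^t \cdot t$ geometric-times-linear sum by the $\beta$-mixing constant $L_\nu$; and (4) bound $\|P-P'\|$ in the appropriate operator/total-variation norm by the policy difference $\max_s\|\pi^*_{\tau,\lambda}(\cdot|s)-\pi^*_{\tau,\lambda'}(\cdot|s)\|_1$. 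The equality $\|\nu-\nu'\|_1 = 2\dtv(\nu,\nu')$ is just the standard relation between $\ell_1$ distance and total variation for probability measures and holds by definition.

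The hard part will be step (3): correctly accounting for the mixing so that the final constant is the \emph{$\gamma$-free} quantity $L_\nu = \lceil \log_\beta(C_M^{-1})\rceil + (1-\beta)^{-1} + 1$ rather than something blowing up like $1/(1-\gamma)$. The subtlety is that a naive telescoping of $\sum_t \gamma^t (P^t - (P')^t)$ produces a factor that grows with the horizon, and one needs uniform ergodicity precisely to tame this: by splitting the time sum at a burn-in threshold $\tau_{\mathrm{mix}} \approx \lceil \log_\beta(C_M^{-1})\rceil$, the early terms contribute the logarithmic piece while the tail geometric mixing contributes the $(1-\beta)^{-1}$ piece, matching Mitrophanov's structure. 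Getting the discount-free constant right, and cleanly handling the interplay between the $\gamma$-discounting of $\nu_\rho^\pi$ and the $\beta$-mixing of the undiscounted chain, is the delicate bookkeeping I would expect to be the central technical obstacle.
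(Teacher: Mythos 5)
There is a genuine gap, and it sits exactly where you flagged the difficulty: in connecting the discounted visitation measure to Mitrophanov's stationary-distribution bound. Your plan is to apply Mitrophanov to the stationary distributions of the two induced chains and to let the additive $+1$ in $L_\nu$ ``absorb the discrepancy between the discounted visitation measure $\nu_\rho^\pi$ and the true stationary distribution $\chi_\pi$, together with the contribution of the initial distribution.'' That cannot work: the gap between $\nu_\rho^\pi$ and the stationary distribution of the undiscounted chain is an $O(1)$ quantity depending on $\gamma$, $\rho$, and the mixing time (for small $\gamma$, $\nu_\rho^\pi$ is essentially $\rho$ lifted by the policy), and it is in no way proportional to $\max_{s\in\mcs}\|\pi^*_{\tau,\lambda}(\cdot|s)-\pi^*_{\tau,\lambda'}(\cdot|s)\|_1$, so it cannot be absorbed into a constant multiplying that policy difference. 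The paper's resolution is an exact identity that your proposal is missing: the discounted \emph{state} visitation distribution $\chi_\rho^\pi$ is precisely the stationary distribution of the ``restart'' kernel $\tilde P^\pi_{s,s'}=\gamma\sum_{a\in\mca}\pi(a|s)\mathsf{P}(s'|s,a)+(1-\gamma)\rho(s')$. With this identity, Mitrophanov's Theorem 3.1 applies verbatim to the two restart kernels with no transient correction at all, $\|\tilde P^{\pi^*_{\tau,\lambda}}-\tilde P^{\pi^*_{\tau,\lambda'}}\|$ is bounded by the policy difference, and the $+1$ in $L_\nu$ arises from a completely different source: converting the state visitation bound into a state-action visitation bound via $\nu_\rho^\pi(s,a)=\chi_\rho^\pi(s)\pi(a|s)$, whose product-rule decomposition contributes one extra copy of the policy difference.

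To be fair, your alternative telescoping route (steps (1)--(3)) \emph{can} be completed into a correct proof, but then Mitrophanov's theorem plays no role; you would be re-deriving its mechanism by hand on the state-action chain. Concretely: split $\mu_0^{\pi}(P^{\pi})^t-\mu_0^{\pi'}(P^{\pi'})^t$ into the difference of initial lifts $\rho(s)\pi(a|s)$ versus $\rho(s)\pi'(a|s)$ (which, summed against $(1-\gamma)\gamma^t$, is what would produce the $+1$ in this route) plus the telescoped kernel difference; each telescoping term has the form of a zero-mass signed measure hit by $(P^{\pi})^j$, which uniform ergodicity contracts to order $\min\{1,C_M\beta^j\}$ times the policy difference, and $\sum_{j\ge 0}\min\{1,C_M\beta^j\}$ gives a discount-free constant of the same form as $\lceil\log_\beta(C_M^{-1})\rceil+(1-\beta)^{-1}$. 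So the skeleton is salvageable, but as written your load-bearing step --- Mitrophanov plus ``absorb the discrepancy'' --- is incorrect, and the attribution of the $+1$ is wrong under either route.
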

\begin{proof}
Our proof here is inspired by \cite{zou2019finite,xu2020improving}. Readers may find more bounds on the difference of visitation measures in their papers. Denote $\tilde P(s'|s,a) := \gamma P(s'|s,a) + (1-\gamma) \rho(s')$, $\forall s, s' \in \mcs,~ a \in \mca.$ Denote $\tilde P^\pi$ as a transition kernel, where $\tilde P^\pi_{s,s'}:= \gamma \tsum_{a\in \mca} \pi(a|s) \mathsf{P}(s'|s,a)+(1-\gamma) \rho(s')$. 
Moreover, recall that $\chi^\pi_\rho(s) = \sum_{a\in\mca}\nu_\rho^\pi(s, a) = (1-\gamma)\tsum_{t=0}^\infty \gamma^t \mathsf{P}(s_t=s|s_0 \sim \rho,a_t \sim \pi(\cdot|s_t), s_{t+1} \sim \mathsf{P}(\cdot|s_t,a_t))$. Then we have
\begin{align*}
    \left(\left(\chi_\rho^\pi \right)^\top\tilde P^\pi\right)_{s'} &= \gamma \tsum_{s\in \mcs}(1-\gamma)\big(\tsum_{t=0}^\infty \gamma^t \mathsf{P}(s_t=s|s_0 \sim \rho,a_t \sim \pi(\cdot|s_t), s_{t+1} \sim \mathsf{P}(\cdot|s_t,a_t)) \big)\cdot \big(\tsum_{a\in \mca} \pi(a|s) \mathsf{P}(s'|s,a) \big)\\
    &\quad + (1-\gamma) \rho(s')\big(\tsum_{s\in S} \chi_\rho^\pi (s)\big)\\
    &= (1-\gamma) \tsum_{t=1}^\infty \gamma^t \mathsf{P}(s_t=s'|s_0 \sim \rho,a_t \sim \pi(\cdot|s_t), s_{t+1} \sim \mathsf{P}(\cdot|s_t,a_t))  + (1-\gamma) \rho(s')\big(\tsum_{s\in S} \chi_\rho^\pi (s)\big)\\
    &= (1-\gamma)\left(\rho(s^\prime) + \tsum_{t=1}^\infty \gamma^t \mathsf{P}(s_t=s'|s_0 \sim \rho,a_t \sim \pi(\cdot|s_t), s_{t+1} \sim \mathsf{P}(\cdot|s_t,a_t)) \right)\\
    &=\chi_\rho^\pi(s').
\end{align*}
The above equality implies that $\left(\chi_\rho^\pi \right)^\top\tilde P^\pi= \left(\chi_\rho^\pi \right)^\top$. Therefore, $\chi_\rho^\pi$ is the stationary distribution with respect to kernel $\tilde P^\pi$. Invoking Theorem 3.1 of \cite{mitrophanov2005sensitivity} gives us 
\begin{align}
    \left\|\chi_\rho^{\pi^*_{\tau,\lambda}}- \chi_\rho^{\pi^*_{\tau,\lambda'}}\right\|_1\leq \left(\lceil \log_{\beta}(C_M^{-1})\rceil + (1-\beta)^{-1}\right)\|\tilde P^{\pi^*_{\tau,\lambda}} - \tilde P^{\pi^*_{\tau,\lambda'}}\|,\label{eq:midlipschitz0}
\end{align}
where $\|\tilde P^{\pi^*_{\tau,\lambda}} - \tilde P^{\pi^*_{\tau,\lambda'}}\|:=\sup_{\|q\|_1=1} \tsum_{s'\in \mcs}\big|\tsum_{s\in \mcs}q(s)(\tilde P^{\pi^*_{\tau,\lambda}}_{s,s'}-\tilde P^{\pi^*_{\tau,\lambda'}}_{s,s'})\big|$.\\
We  upper-bound $\|\tilde P^{\pi^*_{\tau,\lambda}} - \tilde P^{\pi^*_{\tau,\lambda'}}\|$ in the next steps.
\begin{align}
\|\tilde P^{\pi^*_{\tau,\lambda}} - \tilde P^{\pi^*_{\tau,\lambda'}}\|&\leq \sup_{\|q\|_1=1} \tsum_{s'\in \mcs}\tsum_{s\in \mcs}|q(s)|\big|(\tilde P^{\pi^*_{\tau,\lambda}}_{s,s'}-\tilde P^{\pi^*_{\tau,\lambda'}}_{s,s'})\big|\nonumber\\
&\leq \gamma\sup_{\|q\|_1=1} \tsum_{s\in \mcs}|q(s)| \tsum_{a\in \mca}\big|\pi^*_{\tau,\lambda}(a|s)-\pi^*_{\tau,\lambda'}(a|s)\big|\big(\tsum_{s'\in \mcs} \mathsf{P}(s'|s,a)\big)\nonumber\\
&\le \sup_{\|q\|_1=1} \big(\tsum_{s \in S}|q(s)|\big) \cdot \max_{s\in \mcs} \|\pi^*_{\tau,\lambda}(\cdot|s)-\pi^*_{\tau,\lambda'}(\cdot|s)\|_1\nonumber\\
&= \max_{s\in \mcs} \|\pi^*_{\tau,\lambda}(\cdot|s)-\pi^*_{\tau,\lambda'}(\cdot|s)\|_1.\nonumber
\end{align}

Substituting the above inequality in to \cref{eq:midlipschitz0} yields
\begin{align}
 \left\|\chi_\rho^{\pi^*_{\tau,\lambda}}- \chi_\rho^{\pi^*_{\tau,\lambda'}}\right\|_1\leq \left(\lceil \log_{\beta}(C_M^{-1})\rceil + (1-\beta)^{-1}\right)\max_{s\in \mcs} \|\pi^*_{\tau,\lambda}(\cdot|s)-\pi^*_{\tau,\lambda'}(\cdot|s)\|_1.\label{eq:midlipschitz1}
\end{align}
Moreover, we have
\begin{align*}
    \left\|\nu_\rho^{\pi^*_{\tau,\lambda}}- \nu_\rho^{\pi^*_{\tau,\lambda'}}\right\|_1 &= \sum_{s\in\mcs, a\in\mcs} \left|\nu_\rho^{\pi^*_{\tau,\lambda}}(s,a)- \nu_\rho^{\pi^*_{\tau,\lambda'}}(s,a)\right|\\
    &= \sum_{s\in\mcs, a\in\mcs} \left|\chi_\rho^{\pi^*_{\tau,\lambda}}(s){\pi^*_{\tau,\lambda}}(a|s)- \chi_\rho^{\pi^*_{\tau,\lambda'}}(s){\pi^*_{\tau,\lambda'}}(a|s)\right|\\
    & =  \sum_{s\in\mcs, a\in\mcs} \left|\chi_\rho^{\pi^*_{\tau,\lambda}}(s){\pi^*_{\tau,\lambda}}(a|s)-   \chi_\rho^{\pi^*_{\tau,\lambda'}}(s){\pi^*_{\tau,\lambda}}(a|s) + \chi_\rho^{\pi^*_{\tau,\lambda'}}(s){\pi^*_{\tau,\lambda}}(a|s) - \chi_\rho^{\pi^*_{\tau,\lambda'}}(s) {\pi^*_{\tau,\lambda'}}(a|s)\right|\\
    &\le \sum_{s\in\mcs, a\in\mcs} \left|\chi_\rho^{\pi^*_{\tau,\lambda}}(s)- \chi_\rho^{\pi^*_{\tau,\lambda'}}(s)\right|{\pi^*_{\tau,\lambda}}(a|s) + \sum_{s\in\mcs, a\in\mcs} \chi_\rho^{\pi^*_{\tau,\lambda'}}(s)\left|{\pi^*_{\tau,\lambda}}(a|s)  -  {\pi^*_{\tau,\lambda'}}(a|s)\right|\\
    & = \sum_{s\in\mcs} \left|\chi_\rho^{\pi^*_{\tau,\lambda}}(s)- \chi_\rho^{\pi^*_{\tau,\lambda'}}(s)\right| + \sum_{s\in\mcs} \chi_\rho^{\pi^*_{\tau,\lambda'}}(s)\left\| {\pi^*_{\tau,\lambda}}(\cdot|s)  -  {\pi^*_{\tau,\lambda'}}(\cdot|s)\right\|_1\\
    &\overset{(i)}\le \left(\lceil \log_{\beta}(C_M^{-1})\rceil + (1-\beta)^{-1} + 1\right) \max_{s\in \mcs} \|\pi^*_{\tau,\lambda}(\cdot|s)-\pi^*_{\tau,\lambda'}(\cdot|s)\|_1,
\end{align*}
where $(i)$ follows from \cref{eq:midlipschitz1}.
\end{proof}

\begin{newlemma}\label{lemma:policycontinuous}
The optimal policy $\pi^*_{\tau, \lambda}$ for the regularized MDP is smooth with respect to $\lambda$, i.e., for all $\lambda, \lambda^\prime \in\mR^m_+$, we have 
\begin{equation*}
      \max_{s\in \mcs} \|\pi^*_{\tau,\lambda}(\cdot|s)-\pi^*_{\tau,\lambda'}(\cdot|s)\|_1\le  \tfrac{2R_{\max}}{(1-\gamma)\tau}\|\lambda -\lambda^\prime\|_2.
\end{equation*}
\end{newlemma}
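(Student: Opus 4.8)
The plan is to exploit the closed-form Gibbs (softmax) characterization of the optimal entropy-regularized policy and then propagate Lipschitz continuity backward through the $Q$-function to the dual variable. Since $\pi^*_{\tau,\lambda}$ is the optimal policy of the entropy-regularized MDP with reward $r_\lambda = r_0 + \sum_{i=1}^m \lambda_i r_i$, it has the form $\pi^*_{\tau,\lambda}(a|s) = \exp(Q^*_{\tau,\lambda}(s,a)/\tau)/\sum_{a'\in\mca}\exp(Q^*_{\tau,\lambda}(s,a')/\tau)$, exactly as used in \Cref{alg:regpo_softq}. I would then build the estimate in three links: first bound $\max_{s}\|\pi^*_{\tau,\lambda}(\cdot|s) - \pi^*_{\tau,\lambda'}(\cdot|s)\|_1$ by $\tfrac{2}{\tau}\|Q^*_{\tau,\lambda} - Q^*_{\tau,\lambda'}\|_\infty$; next bound the $Q$-difference by $\tfrac{1}{1-\gamma}\|r_\lambda - r_{\lambda'}\|_\infty$; and finally bound the reward difference by $R_{\max}\|\lambda - \lambda'\|_2$. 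Multiplying the three factors produces precisely the claimed constant $2R_{\max}/((1-\gamma)\tau)$.

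The crux is the first link: a sharp $\ell_1$ Lipschitz bound for the softmax map that avoids any dependence on $|\mca|$. Writing both policies as $\mathrm{softmax}(u)$ and $\mathrm{softmax}(v)$ with $u = Q^*_{\tau,\lambda}(s,\cdot)/\tau$ and $v = Q^*_{\tau,\lambda'}(s,\cdot)/\tau$ (for fixed $s$), I would parametrize the segment $u(t) = (1-t)u + tv$, set $p(t)=\mathrm{softmax}(u(t))$, and integrate the softmax Jacobian $J(t) = \mathrm{diag}(p(t)) - p(t)p(t)^\top$, so that $\pi^*_{\tau,\lambda}(\cdot|s) - \pi^*_{\tau,\lambda'}(\cdot|s) = -\int_0^1 J(t)(v-u)\,dt$. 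For any $w$ one has $(J(t)w)_a = p_a(t)\big(w_a - \langle p(t), w\rangle\big)$, so $\|J(t)w\|_1 = \sum_a p_a(t)\,|w_a - \langle p(t), w\rangle| \le \max_a w_a - \min_a w_a \le 2\|w\|_\infty$, since the weighted mean $\langle p(t),w\rangle$ lies in the range of $w$. Taking $w = v-u$ yields $\max_s\|\pi^*_{\tau,\lambda}(\cdot|s) - \pi^*_{\tau,\lambda'}(\cdot|s)\|_1 \le 2\|u-v\|_\infty = \tfrac{2}{\tau}\|Q^*_{\tau,\lambda} - Q^*_{\tau,\lambda'}\|_\infty$. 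I would explicitly flag that the more elementary route via \Cref{lemma:logfunction,lemma:expfunction} — controlling $|\log\pi^*_{\tau,\lambda}(a|s)-\log\pi^*_{\tau,\lambda'}(a|s)|$ entrywise and then passing to $|\pi^*_{\tau,\lambda}(a|s)-\pi^*_{\tau,\lambda'}(a|s)|$ — is valid but, after summing over $a$, loses a spurious factor $|\mca|$; the mean-value/Jacobian argument is what removes it and keeps the estimate consistent with the $|\mca|$-free smoothness constant $L_d$ of \Cref{prop:smoothness}.

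The remaining two links are routine. For the second, $Q^*_{\tau,\lambda}$ and $Q^*_{\tau,\lambda'}$ are the fixed points of the soft Bellman operators $\mathcal{T}_{\tau,\lambda}$ and $\mathcal{T}_{\tau,\lambda'}$, which are $\gamma$-contractions in $\|\cdot\|_\infty$ and differ only in their reward term, so $\mathcal{T}_{\tau,\lambda}(Q) - \mathcal{T}_{\tau,\lambda'}(Q) = r_\lambda - r_{\lambda'}$ for every $Q$ (the LogSumExp part is identical). Adding and subtracting $\mathcal{T}_{\tau,\lambda'}(Q^*_{\tau,\lambda})$ and invoking the contraction gives $\|Q^*_{\tau,\lambda} - Q^*_{\tau,\lambda'}\|_\infty \le \|r_\lambda - r_{\lambda'}\|_\infty + \gamma\|Q^*_{\tau,\lambda} - Q^*_{\tau,\lambda'}\|_\infty$, hence $\|Q^*_{\tau,\lambda} - Q^*_{\tau,\lambda'}\|_\infty \le \|r_\lambda - r_{\lambda'}\|_\infty/(1-\gamma)$. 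For the third, $r_\lambda - r_{\lambda'} = \sum_{i=1}^m(\lambda_i - \lambda'_i)r_i$, so Cauchy–Schwarz together with $R_{\max} = \sqrt{\sum_{i=1}^m r_{i,\max}^2}$ gives $\|r_\lambda - r_{\lambda'}\|_\infty \le R_{\max}\|\lambda - \lambda'\|_2$.

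I expect the sole genuine obstacle to be the softmax step: a naive $\ell_1$ bound carries an $|\mca|$ factor that would propagate into $L_d$ and degrade the advertised complexity, so the mean-value integration of the Jacobian (or an equivalent sharp argument) is essential. The contraction and Cauchy–Schwarz steps are standard. As a minor bookkeeping point, I would note that every $\pi^*_{\tau,\lambda}(a|s)\in(0,1)$ because the regularized optimum is a strictly positive Gibbs policy, which keeps any logarithmic comparison well-defined should the elementary route be used as a cross-check.
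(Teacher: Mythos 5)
Your proof is correct, and at the decisive step it is genuinely different from — and sharper than — the paper's own argument. The paper proves this lemma by exactly the elementary route you flagged: it bounds $|\log\pi^*_{\tau,\lambda}(a|s)-\log\pi^*_{\tau,\lambda'}(a|s)|\le \tfrac{2}{\tau}\|Q^*_{\tau,\lambda}-Q^*_{\tau,\lambda'}\|_\infty$ via \Cref{lemma:expfunction}, converts this to an entrywise bound on $|\pi^*_{\tau,\lambda}(a|s)-\pi^*_{\tau,\lambda'}(a|s)|$ via \Cref{lemma:logfunction}, and then concludes by ``taking maximum on the left-hand side.'' As written, that yields only an $\ell_\infty$ bound over $(s,a)$; passing to the quantity $\max_s\|\pi^*_{\tau,\lambda}(\cdot|s)-\pi^*_{\tau,\lambda'}(\cdot|s)\|_1$ claimed in the lemma by summing over actions would incur the factor $|\mca|$ you identified (a mean-value correction of the log-comparison, $|x-y|\le\max(x,y)|\log x-\log y|$, removes $|\mca|$ but still loses a factor of $2$). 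Your Jacobian integration along the segment, with $\|J(t)w\|_1=\sum_a p_a(t)|w_a-\langle p(t),w\rangle|\le 2\|w\|_\infty$, is precisely what delivers the $\ell_1$ bound with the advertised $|\mca|$-free constant $2R_{\max}/((1-\gamma)\tau)$, and hence is what actually supports the $|\mca|$-free smoothness constant $L_d$ in \Cref{prop:smoothness}; in this sense your proof closes a gap left by the paper's. The middle link also differs, though both versions give the same constant: you bound $\|Q^*_{\tau,\lambda}-Q^*_{\tau,\lambda'}\|_\infty\le\|r_\lambda-r_{\lambda'}\|_\infty/(1-\gamma)$ by a fixed-point/contraction argument for the soft Bellman operator, whereas the paper invokes \Cref{lemma:maximumfunction} to write $|Q^*_{\tau,\lambda}-Q^*_{\tau,\lambda'}|\le\max_{\pi\in\Pi}|Q^\pi_{\tau,\lambda}-Q^\pi_{\tau,\lambda'}|$ and then compares the two $Q$-functions under a fixed policy; the paper's version avoids appealing to the soft Bellman fixed-point characterization, while yours avoids the sup-exchange lemma. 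The final Cauchy--Schwarz step, $\|r_\lambda-r_{\lambda'}\|_\infty\le R_{\max}\|\lambda-\lambda'\|_2$, is identical in both.
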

\begin{proof} 
We first rewrite the Lagrangian as follows
\begin{align}\label{eq:aaa}
    \mcL_{\tau,\mu}(\pi, \lambda) &=  V_{\tau,\lambda}^{\pi} (\rho)- \langle\lambda, \cb\rangle +\tfrac{\mu}{2}\|\lambda\|^2_2.
\end{align}
The optimal policy that maximizes eq. \eqref{eq:aaa} is the optimal policy for the same MDP but with entropy regularizer and a different reward function $r_\lambda$, i.e. $\pi^*_{\tau, \lambda}$. In \cite{nachum2017bridging} it was shown that $\pi^*_{\tau, \lambda}$ is as follows
\begin{align*}
    \pi_{\tau,\lambda}^* (a|s) = \tfrac{\exp(Q_{\tau,\lambda}^*(s,a)/\tau)}{\sum_{a^\prime \in\mca}\exp(Q_{\tau,\lambda}^*(s,a^\prime)/\tau)}, \quad \forall s\in\mcs, a\in\mca .
\end{align*}
Thus, for any $s\in\mcs$ and $a\in\mca$, we have that
\begin{align*}
    &|\log\left(\pi^*_{\tau, \lambda}(s,a)\right) - \log\left(\pi^*_{\tau, \lambda^\prime}(s,a)\right)| \nn\\
    &=  \left|\tfrac{1}{\tau}(Q_{\tau, \lambda}^*(s,a) -Q_{\tau, \lambda^\prime}^*(s,a)) - \left(\log\left(\sum_{a^\prime\in\mca} \exp\left(\tfrac{Q_{\tau, \lambda}^*(s,a^\prime)}{\tau}\right)\right) - \log\left(\sum_{a^\prime\in\mca} \exp\left(\tfrac{Q_{\tau, \lambda^\prime}^*(s,a^\prime)}{\tau}\right)\right)\right)\right|\\
    &\overset{(i)}\le \tfrac{1}{\tau}|Q_{\tau, \lambda}^*(s,a) -Q_{\tau, \lambda^\prime}^*(s,a)| +  \tfrac{1}{\tau} \|Q_{\tau, \lambda}^*(s,\cdot) -Q_{\tau, \lambda^\prime}^*(s,\cdot)\|_\infty\\
    &\le \tfrac{2}{\tau}\|Q_{\tau, \lambda}^* -Q_{\tau, \lambda^\prime}^*\|_\infty,
\end{align*}
where $(i)$ follows from the fact that $|x+y|\le |x| + |y|$ and \Cref{lemma:expfunction}. Further applying \Cref{lemma:logfunction}, we have that
\begin{align}
    |\pi^*_{\tau, \lambda}(s,a) - \pi^*_{\tau, \lambda^\prime}(s,a)| &\le\tfrac{2}{\tau}\|Q_{\tau, \lambda}^* -Q_{\tau, \lambda^\prime}^*\|_\infty\nonumber\\
    &= \tfrac{2}{\tau} \max_{s\in\mcs, a\in\mca} |Q_{\tau, \lambda}^*(s,a) -Q_{\tau, \lambda^\prime}^* (s,a)|\nonumber\\
    &\overset{(i)}\le \tfrac{2}{\tau} \max_{s\in\mcs, a\in\mca} \max_{\pi\in\Pi}|Q_{\tau, \lambda}^\pi(s,a) -Q_{\tau, \lambda^\prime}^\pi(s,a)|\nonumber\\
    &\overset{(ii)}\le \tfrac{2R_{\max}}{(1-\gamma)\tau} \|\lambda -\lambda^\prime\|_2, \label{eq:pidifference}
\end{align}
where $(i)$ follows from \Cref{lemma:maximumfunction}, and $(ii)$ follows from the uniform upper bound developed below,
\begin{align*}
    &|Q_{\tau, \lambda}^\pi(s,a) -Q_{\tau, \lambda^\prime}^\pi(s,a)|\\
    &\quad= \left|r_{\lambda}(s,a) + \gamma\mE_{s^\prime\sim\mathsf{P}(\cdot|s,a)}\left[V^\pi_{\tau,\lambda}(s^\prime)\right] - r_{\lambda^\prime}(s,a) - \gamma\mE_{s^\prime\sim\mathsf{P}(\cdot|s,a)}\left[V^\pi_{\tau,\lambda^\prime}(s^\prime)\right] \right|\\
    &\quad\le |r_{\lambda}(s,a) - r_{\lambda^\prime}(s,a)| + \gamma\mE_{s^\prime\sim\mathsf{P}(\cdot|s,a)}\left[\left|V^\pi_{\tau,\lambda}(s^\prime) - V^\pi_{\tau,\lambda^\prime}(s^\prime)\right|\right]\\
    &\quad\le \tfrac{R_{\max}}{1-\gamma}\|\lambda - \lambda^\prime\|_2.
\end{align*}
Taking maximum on the left-hand-side of \cref{eq:pidifference} concludes the proof.
\end{proof}

\subsection{Proof of the Propositions}\label{section:proofofproposition}

\subsubsection{Proof of \Cref{prop:smoothness}} 
 
We first note that $\Pi$ is a compact set. Moreover, $\mcL_{\tau,\mu}(\pi, \lambda)$ is continuous in both $\pi$ and $\lambda$, and is convex in $\lambda$ for any given $\pi$. The maximum of $\mcL_{\tau,\mu}(\pi, \lambda)$ is attained uniquely at $\pi^*_{\tau,\lambda}$. Thus, by Danskin's theorem, we obtain that
 \[\nabla d_{\tau, \mu}(\lambda) = \tfrac{\partial}{\partial \lambda} \mcL_{\tau, \mu} (\pi, \lambda)|_{\pi = \pi^*_{\tau,\lambda}}= V^{\pi^*_{\tau,\lambda}}(\rho) -\cb +\mu\lambda.\]
 Moreover, it can be shown that
 \begin{align*}
     \|\nabla d_{\tau, \mu}(\lambda) - \nabla d_{\tau, \mu}(\lambda^\prime) \|_2 &= \|V^{\pi^*_{\tau,\lambda}}(\rho) -\cb +\mu\lambda - (V^{\pi^*_{\tau,\lambda^\prime}}(\rho) -\cb +\mu\lambda^\prime)\|_2\nonumber\\
     &\le \|V^{\pi^*_{\tau,\lambda}}(\rho)- V^{\pi^*_{\tau,\lambda^\prime}}(\rho)\|_2 +\mu \|\lambda - \lambda^\prime\|_2\nonumber\\
     & = \tfrac{1}{1-\gamma}\bigg(\sum_{i=1}^m (\langle\nu_\rho^{\pi^*_{\tau,\lambda}}- \nu_\rho^{\pi^*_{\tau,\lambda^\prime}}, r_i\rangle)^2\bigg)^{1/2} + \mu \|\lambda - \lambda^\prime\|_2\nn\\
     &\leq \tfrac{R_{\max}}{1-\gamma}\|\nu_{\rho}^{\pi^*_{\tau,\lambda}}- \nu_{\rho}^{\pi^*_{\tau,\lambda'}}\|_1 + \mu\|\lambda-\lambda'\|_2\nn\\
     &\overset{(i)}\leq\tfrac{R_{\max}L_\nu}{1-\gamma}\max_{s\in \mcs}\|\pi^*_{\tau,\lambda}(\cdot|s)-\pi^*_{\tau,\lambda'}(\cdot|s)\|_1+ \mu\|\lambda-\lambda'\|_2\nn\\
     &\overset{(ii)}\leq\left(\tfrac{2R^2_{\max}L_\nu}{(1-\gamma)^2\tau}+\mu\right)\|\lambda-\lambda'\|_2,
 \end{align*}
 where $(i)$ follows from \Cref{lemma_nu_lipschitz} and $(ii)$ follows from \Cref{lemma:policycontinuous}. Then setting $L_d =\tfrac{2R_{\max}^2L_\nu}{(1-\gamma)^2\tau} +\mu$ completes the proof.

\subsubsection{Proof of \Cref{prop:tildepi}}
First, we construct an auxiliary non-stationary policy $\pi_N$ as follows. Let $N$ be a random index variable with $\prob(N=1) = (1-\alpha)^{T-1}$, and $\prob(N= i) = \alpha(1-\alpha)^{T-i}$ for $i= 2,\dots, T$. To execute the algorithm, an agent first randomly generates a value for $N$, and then applies the policy $\pi_N(\cdot|s_t)$ throughout the process. By construction, we have
\begin{align}
&\nu_{\rho}^{\pi_N}(s,a)= (1-\alpha)^{T-1} \nu_\rho^{\pi_1}(s,a) + \sum\nolimits_{t=2}^T \alpha(1-\alpha)^{T-t} \nu_\rho^{\pi_{t}}(s,a), \\
&\chi_{\rho}^{\pi_N}(s)= (1-\alpha)^{T-1} \chi_\rho^{\pi_1}(s) + \sum\nolimits_{t=2}^T \alpha(1-\alpha)^{T-t} \chi_\rho^{\pi_{t}}(s).
\end{align}
Then by the definition of $\tilde{\pi}$, we have
\begin{align}\label{eq:pin}
\nu_{\rho}^{\pi_N}(s,a) = \chi_{\rho}^{\pi_N}(s) \tilde{\pi}(a|s).
\end{align}
Furthermore, applying $\tilde{\pi}$ as a stationary policy, we obtain
\begin{align}\label{eq:pitilde}
\nu_\rho^{\tilde\pi}(s, a)=\chi_\rho^{\tilde\pi} (s) \tilde\pi(a|s).   
\end{align}
Then it suffices to show that $\chi_{\rho}^{\pi_N}(s)=\chi_\rho^{\tilde\pi} (s)$, because \cref{eq:pin} and \cref{eq:pitilde} together imply 
\begin{equation*}
    \nu_\rho^{\tilde\pi}(s, a) = \nu_\rho^{\pi_N} (s, a) = (1-\alpha)^{T-1} \nu_\rho^{\pi_1}(s,a) + \sum\nolimits_{t=2}^T \alpha(1-\alpha)^{T-t} \nu_\rho^{\pi_{t}}(s,a), 
\end{equation*}
which further yields that $V_i^{\tilde\pi}(\rho) = (1-\alpha)^{T-1} V_i^{\pi_1}(\rho)  + \sum\nolimits_{t=2}^T \alpha(1-\alpha)^{T-t} V_i^{\pi_{t}}(\rho)$ holds for all $i=0,\ldots, m$ due to the fact that $V_i^{\pi}(\rho) = \left\langle r_i, \nu_\rho^\pi\right\rangle$.

To show that $\chi_{\rho}^{\pi_N}(s)=\chi_\rho^{\tilde\pi} (s)$, we will prove that they are both equal to the unique fixed point of a Bellman operator. The following proof is the one in \cite{csaba} and we include them here for completeness. To this end, for a policy $\pi$ and a  measure $\nu$ over the state space $\mcs$, we define a Bellman operator $\mathcal{B}_{\rho}^\pi$ as: for all $s\in\mcs$,
\begin{align*}
    (\mathcal{B}_{\rho}^\pi (\chi))_{s} \coloneqq (1-\gamma)\rho(s) + \gamma\sum_{s^\prime, a^\prime} \mathsf{P}(s|s^\prime, a^\prime)\pi(a^\prime|s^\prime)\chi(s^\prime).
\end{align*}
It is easy to verify that $\mathcal{B}_{\rho}^\pi$ satisfies the $\gamma$-contraction property with respect to $\ell_\infty$ norm, i.e., for any $\chi, \chi^\prime\in\mR^{|\mcs|}$, we have 
\begin{equation*}
    \|\mathcal{B}_{\rho}^\pi (\chi) - \mathcal{B}_{\rho}^\pi (\chi^\prime)\|_\infty \le \gamma \|\chi - \chi^\prime\|_\infty.
\end{equation*}
Thus, by the Banach–Caccioppoli fixed-point theorem, on the compact bounded set (the probability simplex over $\mcs$), $\mathcal{B}_{\rho}^\pi$ has a unique fixed point. We first show that $\chi_\rho^\pi$ is a fixed point of $\mathcal{B}_{\rho}^\pi$ for any $\pi$ as follows. 
\begin{align*}
    \chi_\rho^\pi(s) &= (1-\gamma)\sum_{t=0}^\infty \gamma^t \prob\{s_t = s| s_0 \sim \rho, \pi\}\\
    &= (1-\gamma)\rho(s) + (1-\gamma)\sum_{t=1}^\infty \gamma^t \prob(s_t = s| s_0 \sim \rho, \pi)\\
    &\overset{(i)}= (1-\gamma)\rho(s) + (1-\gamma)\sum_{t=1}^\infty \gamma^t\sum_{s^\prime, a^\prime} \big(\prob(s_t=s|s_{t-1}=s^\prime, a_{t-1}=a^\prime)\\
    &\qquad \quad \quad\qquad \quad \quad\cdot\prob(s_{t-1}= s^\prime, a_{t-1} = a^\prime|s_0\sim\rho, \pi)\big)\\
    &=  (1-\gamma)\rho(s) + \gamma\sum_{s^\prime, a^\prime}\prob(s_t=s|s_{t-1} = s^\prime, a_{t-1} = a^\prime)\pi(a^\prime|s^\prime)\\
    &\qquad \quad \quad\qquad \quad \quad \cdot(1-\gamma)\big(\sum_{t=1}^\infty \gamma^{t-1}\prob(s_{t-1}= s^\prime|s_0\sim\rho, \pi)\big)\\
    &=  (1-\gamma)\rho(s) + \gamma\sum_{s^\prime, a^\prime} \mathsf{P}(s|s^\prime, a^\prime){\pi}(a^\prime|s^\prime) \chi_\rho^\pi(s^\prime)\\
    &= (\mathcal{B}^\pi_\rho (\chi^\pi_\rho))_{s},
\end{align*}
where $(i)$ follows from the property of Markov chain. Thus, $\chi_\rho^{\tilde\pi}$ is a fixed point of $\mathcal{B}_{\rho}^{\tilde\pi}$.

 
Then, it is sufficient to show that $\chi_\rho^{\pi_N}$ is also a fixed point of $\mathcal{B}^{\tilde \pi}_\rho$, which is given as follows:
\begin{align*}
    \chi_{\rho}^{\pi_N}(s)&= (1-\gamma)\rho(s) + (1-\gamma)\sum_{t=1}^\infty \gamma^t \prob(s_t = s|s_0\sim\rho, \pi_N)\\
    &= (1-\gamma)\rho(s) + (1-\gamma)\sum_{t=1}^\infty \gamma^t \sum_{s^\prime, a^\prime}\bigg(\prob(s_{t-1} = s^\prime, a_{t-1}= a^\prime |s_0\sim\rho, \pi_N)\\
    &\qquad\qquad\qquad\qquad\qquad\qquad\qquad\cdot\prob(s_t=s |
    s_{t-1} = s^\prime, a_{t-1} = a^\prime)\bigg)\\
    &\overset{(i)} =  (1-\gamma)\rho(s) + \gamma\sum_{s^\prime, a^\prime} \mathsf{P}(s|s^\prime, a^\prime)\nu_\rho^{\pi_N}(s^\prime, a^\prime)\\
    &\overset{(ii)}= (1-\gamma)\rho(s) + \gamma\sum_{s^\prime, a^\prime} \mathsf{P}(s|s^\prime, a^\prime)\tilde{\pi}(a^\prime|s^\prime)\xi_\rho^{\pi_N}(s^\prime, a^\prime)\\
    &= (\mathcal{B}^{\tilde{\pi}}_\rho (\chi^{\pi_N}_\rho))_{s},
\end{align*}
where $(i)$ follows by rearranging the summation and $(ii)$ follows from the definitions of $\nu^{\pi_N}_\rho(s,a)$ and $\chi_\rho^{\pi_N}(s)$. Thus, $\chi_{\rho}^{\pi_N}(s)=\chi_\rho^{\tilde\pi} (s)$ because they both equal the unique fixed point of $\mathcal{B}^{\tilde \pi}_\rho$. This yields the desired result as we argue above, and concludes the proof.

\subsubsection{Proof of \Cref{prop:connection}} 
For convenience of the proof, define following notations
\begin{align*}
    \mcL_{\tau}(\pi, \lambda) = \mcL(\pi, \lambda) +\tau\mcH(\pi),
\end{align*}
and 
\begin{align*}
    d_{\tau}(\lambda) = \max_{\pi\in\Pi}\left\{\mcL_{\tau}(\pi, \lambda)\right\}.
\end{align*}

First, we build the recursive relationship for $\{d_\tau(\underline \lambda_t)\}$. By the definition of $d_{\tau}(\cdot)$, for $t=1,...,T-1$, we have
\begin{align*}
d_{\tau}(\underline \lambda_{t+1}) &= V_0^{\pi^*_{\tau,\underline \lambda_{t+1}}}(\rho)  + \underline \lambda_{t+1}^\top({V}^{\pi^*_{\tau,\underline \lambda_{t+1}}}(\rho)- c)\\
&=V_0^{\pi_{t+1}}(\rho) + \underline \lambda_{t+1}^\top ({V}^{\pi_{t+1}}(\rho)- c) + \Delta_{t+1}\\
&\overset{(i)}=(1-\alpha)[V_0^{\pi_{t+1}}(\rho) + \underline \lambda_{t}^\top ({V}^{\pi_{t+1}}(\rho)- c) ] \\
&\quad+ [(1-q)\alpha + q] [V_0^{\pi_{t+1}}(\rho) + \lambda_{t}^\top ({V}^{\pi_{t+1}}(\rho)- c) ] \\
&\quad-q(1-\alpha)[V_0^{\pi_{t+1}}(\rho) + \lambda_{t-1}^\top ({V}^{\pi_{t+1}}(\rho)- c) ] + \Delta_{t+1}\\
&\overset{(ii)}\leq (1-\alpha)d_{\tau}(\underline \lambda_{t}) + [(1-q)\alpha + q] [V_0^{\pi_{t+1}}(\rho) + \lambda_{t}^\top ({V}^{\pi_{t+1}}(\rho)- c) ]\\
&\quad -q(1-\alpha)[V_0^{\pi_{t+1}}(\rho) + \lambda_{t-1}^\top ({V}^{\pi_{t+1}}(\rho)- c) ] + \Delta_{t+1},
\end{align*}
where $(i)$ follows from the update rule of the algorithm, and $(ii)$ follows from the definition of $d_{\tau}(\cdot)$. Multiplying $(1-\alpha)^{T-1-t}$ on both sides of the above inequality and summing up from $t=1$ to $T-1$, we obtain that
\begin{align}
d_{\tau}(\underline \lambda_{T}) &\leq (1-\alpha)^{T-1} d_{\tau}(\underline \lambda_{1}) + \sum\nolimits_{t=2}^{T} (1-\alpha)^{T-t} [(1-q)\alpha + q] [V_0^{\pi_{t}}(\rho) + \lambda_{t-1}^\top ({V}^{\pi_{t}}(\rho)- c) ]\nonumber\\
&\quad -\sum\nolimits_{t=2}^T (1-\alpha)^{T-t} q(1-\alpha)[V_0^{\pi_{t}}(\rho) + \lambda_{t-2}^\top ({V}^{\pi_{t}}(\rho)- c) ] + \sum\nolimits_{t=2}^{T} (1-\alpha)^{T-t}\Delta_{t}\nonumber\\
&\overset{(i)} \leq (1-\alpha)^{T-1} V_0^{\pi_1} (\rho) + \sum\nolimits_{t=2}^T (1-\alpha)^{T-t} [(1-q)\alpha + q] [V_0^{\pi_{t}}(\rho) + \lambda_{t-1}^\top ({V}^{\pi_{t}}(\rho)- c) ]\nonumber\\
&\quad -\sum\nolimits_{t=2}^T (1-\alpha)^{T-t} q(1-\alpha)[V_0^{\pi_{t}}(\rho) + \lambda_{t-2}^\top ({V}^{\pi_{t}}(\rho)- c) ] + \sum\nolimits_{t=1}^{T} (1-\alpha)^{T-t}\Delta_{t}\nonumber\\
&\leq (1-\alpha)^{T-1} V_0^{\pi_1} (\rho) + \sum\nolimits_{t=2}^T \alpha(1-\alpha)^{T-t} (V_0^{\pi_{t}}(\rho) + \lambda_{t-1}^{\top} ({V}^{\pi_{t}}(\rho)- c)) \nonumber\\
&\quad +\sum\nolimits_{t=2}^T(1-\alpha)^{T-t+1} q(\lambda_{t-1}-\lambda_{t-2})^\top ({V}^{\pi_{t}}(\rho)- c) + \sum\nolimits_{t=1}^{T} (1-\alpha)^{T-t}\Delta_{t},   \label{d_1}
\end{align}
where $(i)$ follows from $\underline\lambda_1=\lambda_0=0$. 

Next, we build the connection between $d_{\tau}(\underline \lambda_{T})$ and the optimal primal function value $V_0^*(\rho)$. From the above inequality and the definition of $d_{\tau}(\cdot)$ and $d_{\tau,\mu}(\cdot)$, it follows that
\begin{align}
d_{\tau}(\underline \lambda_{T}) &= d_{\tau,\mu}(\underline \lambda_{T}) - \tfrac{\mu}{2} \|\underline \lambda_{T}\|_2^2\nonumber\\
&\geq d_{\tau,\mu}(\lambda_{\tau,\mu}^*) - \tfrac{\mu}{2} \|\underline \lambda_{T}\|_2^2\nonumber\\
&= d_{\tau}(\lambda_{\tau,\mu}^*) +\tfrac{\mu}{2} (\|\lambda_{\tau,\mu}^*\|_2^2-\|\underline \lambda_{T}\|_2^2)\nonumber\\
&\overset{(i)}\geq V_0^*(\rho) - \tfrac{\tau}{1-\gamma}\log\left(|\mca|\right)+\tfrac{\mu}{2} (\|\lambda_{\tau,\mu}^*\|_2^2-\|\underline \lambda_{T}\|_2^2),\label{eq:middle1}
\end{align}
where $(i)$ follows from the inductions below 
\begin{align*}
    d_{\tau}(\lambda_{\tau,\mu}^*) &\ge \min_{\lambda\in\mR_+^m} d_{\tau}(\lambda)\\
    & \overset{(i)}= \max_{\pi\in\Pi} \{ V_0^\pi(\rho) + \tau\mcH(\pi): V_i^\pi(\rho)\ge c_i\}\\
    &\overset{(ii)}\ge \max_{\pi\in\Pi} \{ V_0^\pi(\rho): V_i^\pi(\rho)\ge c_i\} -  \tfrac{\tau}{1-\gamma}\log(|\mca|)\\
    &= V_0^*(\rho) -  \tfrac{\tau}{1-\gamma}\log(|\mca|),
\end{align*}
where $(i)$ follows from the zero duality gap of constrained reinforcement learning \citep{paternain2019constrained}, and $(ii)$ follows from \Cref{lemma:maximumfunction} and $|\mcH(\pi)| \le \log(|\mca|)$ for all $\pi$.

Combining \cref{d_1,eq:middle1}, it follows that
\begin{align}
&V_0^*(\rho)  - \left((1-\alpha)^{T-1} V_0^{\pi_1}(\rho)  + \sum\nolimits_{t=2}^T \alpha(1-\alpha)^{T-t} V_0^{\pi_{t}}(\rho) \right) \nonumber\\
\leq&  \sum\nolimits_{t=2}^T \alpha(1-\alpha)^{T-t} \lambda_{t-1}^{\top} ({V}^{\pi_{t}}(\rho)- c)+\sum\nolimits_{t=2}^T(1-\alpha)^{T-t+1} q(\lambda_{t-1}-\lambda_{t-2})^\top ({V}^{\pi_{t}}(\rho)- c) \nonumber\\
&+ \sum\nolimits_{t=1}^{T} (1-\alpha)^{T-t}\Delta_{t}+\tfrac{\tau}{1-\gamma}\log\left(|\mca|\right) +\tfrac{\mu}{2} (\|\underline \lambda_{T}\|_2^2-\|\lambda_{\tau,\mu}^*\|_2^2)\nonumber\\
\leq& \sum\nolimits_{t=2}^T \alpha(1-\alpha)^{T-t} \lambda_{t}^{\top} ({V}^{\pi_{t}}(\rho)- c)+\sum\nolimits_{t=2}^T(1-\alpha)^{T-t+1} q(\lambda_{t-1}-\lambda_{t-2})^\top ({V}^{\pi_{t}}(\rho)- c)     \nonumber\\
&+ \sum\nolimits_{t=2}^T \alpha(1-\alpha)^{T-t} (\lambda_{t-1}-\lambda_{t})^{\top} ({V}^{\pi_{t}(\rho)}- c)+ \sum\nolimits_{t=1}^{T} (1-\alpha)^{T-t}\Delta_{t} + \tfrac{\tau}{1-\gamma}\log\left(|\mca|\right) \nonumber\\
&+\tfrac{\mu}{2} (\|\underline \lambda_{T}\|_2^2-\|\lambda_{\tau,\mu}^*\|_2^2)\nonumber\\
\leq&\sum\nolimits_{t=2}^T \alpha(1-\alpha)^{T-t} \lambda_{t}^{\top} ({V}^{\pi_{t}}(\rho)- c)+ \left(\sum\nolimits_{t=2}^T\alpha(1-\alpha)^{T-t+1} \|\lambda_{t-1}-\lambda_{t-2}\|_2\right)\tfrac{q}{\alpha} \tfrac{R_{\max}}{1-\gamma}   \nonumber\\
&+ \left(\sum\nolimits_{t=2}^T\alpha(1-\alpha)^{T-t} \|\lambda_{t}-\lambda_{t-1}\|_2\right)\tfrac{R_{\max}}{1-\gamma} + \sum\nolimits_{t=1}^{T} (1-\alpha)^{T-t}\Delta_{t} +\tfrac{\tau}{1-\gamma}\log |\mca| +2\mu m B^2.\label{d_3}
\end{align}
In order to analyze the first term of the right hand side of \eqref{d_3}, we exploit the first order optimality condition of step 7 in \Cref{alg:arcpo} to upper bound the term $(\lambda-\lambda_{t})^{\top}( c-{V}^{\pi_{t}}(\rho))$. The first-order optimality condition implies that for $t=1,...,T$, $\forall \lambda\in \Lambda$,
\begin{align}
0 &\leq \langle \eta({V}^{\pi_{t}}(\rho)- c+\mu \underline \lambda_t)+\eta \mu (\lambda_t- \underline \lambda_t)+ (\lambda_t- \lambda_{t-1}), \lambda-\lambda_t\rangle \nonumber\\
&=\langle \eta({V}^{\pi_{t}}(\rho)- c)+(\eta\mu+1) \lambda_t- \lambda_{t-1}, \lambda-\lambda_t\rangle \nonumber\\
&=\langle (\eta\mu+1) \lambda_t- \lambda_{t-1}, \lambda-\lambda_t\rangle + \eta(\lambda-\lambda_{t})^{\top}({V}^{\pi_{t}}(\rho)- c)\nonumber\\
&\leq \|\lambda_{t-1}-(\eta\mu+1)\lambda_{t}\|_2\|\lambda-\lambda_t\|_2+ \eta(\lambda-\lambda_{t})^{\top}({V}^{\pi_{t}}(\rho)- c)\nonumber\\
&\leq 2\|\lambda_t-\lambda_{t-1}\|_2\sqrt{m}B+4\eta\mu mB^2+ \eta(\lambda-\lambda_{t})^{\top}({V}^{\pi_{t}}(\rho)- c).\label{d_4}
\end{align}
Adding both sides of \cref{d_3,d_4}, we have that
\begin{align}
&V_0^*(\rho)  - \left((1-\alpha)^{T-1} V_0^{\pi_1}(\rho)  + \sum\nolimits_{t=2}^T \alpha(1-\alpha)^{T-t} (V_0^{\pi_{t}}(\rho) + \lambda^{\top} ( c- {V}^{\pi_{t}}(\rho)))\right)  \nonumber\\
\leq& \left(\sum\nolimits_{t=1}^{T}\alpha(1-\alpha)^{T-t} \|\lambda_t-\lambda_{t-1}\|_2\right)\left(\tfrac{q}{\alpha}\tfrac{R_{\max}}{1-\gamma} + \tfrac{R_{\max}}{1-\gamma} + \tfrac{2\sqrt{m}B}{\eta}\right)\nonumber\\
&+ \sum\nolimits_{t=1}^{T} (1-\alpha)^{T-t}\Delta_{t} +\tfrac{\tau}{1-\gamma}\log |\mca| +6\mu m B^2\nonumber\\
\overset{(i)}\leq& \left(\sum\nolimits_{t=1}^{T} \alpha(1-\alpha)^{T-t}\right)^{\tfrac{1}{2}}\left(\sum\nolimits_{t=1}^{T} \alpha(1-\alpha)^{T-t}\|\lambda_t-\lambda_{t-1}\|_2^2\right)^{\tfrac{1}{2}}\left(\tfrac{q}{\alpha}\tfrac{R_{\max}}{1-\gamma} + \tfrac{R_{\max}}{1-\gamma} + \tfrac{2\sqrt{m}B}{\eta}\right)\nonumber\\
&+ \sum\nolimits_{t=1}^{T} (1-\alpha)^{T-t}\Delta_{t} +\tfrac{\tau}{1-\gamma} \log |\mca| +6\mu m B^2\nonumber\\
\leq& \left(\sum\nolimits_{t=1}^{T} \alpha(1-\alpha)^{T-t}\|\lambda_t-\lambda_{t-1}\|_2^2\right)^{\tfrac{1}{2}}\left(\tfrac{q}{\alpha}\tfrac{R_{\max}}{1-\gamma} + \tfrac{R_{\max}}{1-\gamma} + \tfrac{2\sqrt{m}B}{\eta}\right)\nonumber\\
&+ \sum\nolimits_{t=1}^{T} (1-\alpha)^{T-t}\Delta_{t} +\tfrac{\tau}{1-\gamma} \log |\mca| +6\mu m B^2, \nonumber
\end{align}
where $(i)$ follows from H{\"o}lder's inequality. This yields the desired result as we argue above, and concludes the proof.

\subsubsection{Proof of \Cref{prop:keyconvergenceprop}}  
    
	Denote $g_t = \bar \lambda_t - \underline \lambda_t$. It follows from lines 4, 7 and 8 of \Cref{alg:arcpo} that 
	\begin{align}
	g_t &= (q-\alpha) \bar \lambda_{t-1} + \alpha \lambda_t - q \lambda_{t-1}\nonumber\\
	&=\alpha[\lambda_t - \tfrac{\alpha-q}{\alpha(1-q)}\underline \lambda_t - \tfrac{q(1-\alpha)}{\alpha(1-q)}\lambda_{t-1}].\label{g_t}
	\end{align}
	By the convexity of $d_{\tau, \mu}(\cdot)$, we have that
	\begin{align*}
	d_{\tau, \mu}(\bar \lambda_t) & \leq d_{\tau, \mu}(\underline \lambda_t) + \langle \nabla d_{\tau,\mu}(\underline{\lambda}_t), \bar \lambda_t -\underline\lambda_t \rangle+  \tfrac{L_d}{2}\|g_t\|_2^2\\
	& = (1-\alpha) [d_{\tau, \mu}(\underline \lambda_t)+ \langle \nabla d_{\tau,\mu}(\underline{\lambda}_t), \bar \lambda_{t-1} -\underline\lambda_t \rangle] + \alpha [d_{\tau, \mu}(\underline \lambda_t)+ \langle \nabla d_{\tau,\mu}(\underline{\lambda}_t), \lambda_t-\underline\lambda_t \rangle] \\
	&\quad+ \tfrac{L_d}{2}\|g_t\|_2^2\\
	&\overset{(i)} \leq (1-\alpha)d_{\tau, \mu}(\bar \lambda_{t-1}) + \alpha[d_{\tau, \mu}(\underline \lambda_{t}) + \langle \nabla d_{\tau,\mu}(\underline{\lambda}_t), \lambda_t-\underline\lambda_t \rangle + \tfrac{L_d(a-q)}{2(1-q)}\|\lambda_t - \underline \lambda_t\|_2^2
	\\
	&\quad+\tfrac{L_dq(1-\alpha)}{2(1-q)}\|\lambda_t-\lambda_{t-1}\|_2^2]\\
	&\overset{(ii)} \leq (1-\alpha)d_{\tau, \mu}(\bar \lambda_{t-1}) + \alpha[d_{\tau, \mu}(\underline \lambda_{t}) + \langle \nabla d_{\tau,\mu}(\underline{\lambda}_t), \lambda_t-\underline\lambda_t \rangle + \tfrac{\mu}{2}\|\lambda_t - \underline \lambda_t\|_2^2\\
	&\quad+\tfrac{1}{4\eta}\|\lambda_t-\lambda_{t-1}\|_2^2],
	\end{align*}
	where $(i)$ follows from \cref{g_t}, the convexity of $d_{\tau, \mu}(\cdot)$, the convexity of $\|\cdot\|_2^2$ and the fact that $\alpha \geq q$, and $(ii)$ follows from the fact that $\tfrac{L_d(\alpha-q)}{1-q}\leq \mu$ and $\tfrac{L_dq(1-\alpha)}{1-q}\leq \tfrac{1}{2\eta}$. Combining the above inequality and \cref{3p}, we obtain that
	\begin{align*}
	d_{\tau, \mu}(\bar \lambda_t) & \leq (1-\alpha)d_{\tau, \mu}(\bar \lambda_{t-1}) + \alpha[d_{\tau, \mu}(\underline \lambda_{t}) + \langle \nabla d_{\tau,\mu}(\underline{\lambda}_t), \lambda-\underline\lambda_t \rangle + \tfrac{\mu}{2}\|\lambda - \underline \lambda_t\|_2^2]\\
	&\quad+\tfrac{\alpha}{2\eta}\|\lambda-\lambda_{t-1}\|_2^2 - \tfrac{\alpha}{2}(\mu+\tfrac{1}{\eta})\|\lambda-\lambda_t\|_2^2-\tfrac{\alpha}{4\eta}\|\lambda_t-\lambda_{t-1}\|_2^2+\alpha\langle \delta_t, \lambda - \lambda_t \rangle\\
	& \leq (1-\alpha)d_{\tau, \mu}(\bar \lambda_{t-1}) + \alpha d_{\tau, \mu}(\lambda)+\tfrac{\alpha}{2\eta}\|\lambda-\lambda_{t-1}\|_2^2- \tfrac{\alpha}{2}(\mu+\tfrac{1}{\eta})\|\lambda-\lambda_t\|_2^2\\
	&\quad -\tfrac{\alpha}{4\eta}\|\lambda_t-\lambda_{t-1}\|_2^2+\alpha\langle \delta_t, \lambda - \lambda_t \rangle.
	\end{align*}
	Subtracting $d_{\tau, \mu}(\lambda)$ on both sides of the above inequality and rearranging the terms, we have 
	\begin{align*}
	K_t(\lambda)+ \tfrac{\alpha}{4\eta}\|\lambda_t-\lambda_{t-1}\|_2^2 &\leq (1-\alpha)[d_{\tau, \mu}(\bar \lambda_{t-1})-d_{\tau, \mu}(\lambda)+\tfrac{\alpha}{2\eta(1-\alpha)}\|\lambda_{t-1}-\lambda\|_2^2]\\
	&\quad +\alpha\langle \delta_t, \lambda - \lambda_t \rangle\\
	& \overset{(i)}\leq (1-\alpha)K_{t-1}(\lambda) + \alpha\langle \delta_t, \lambda - \lambda_t \rangle,
	\end{align*}
	where $(i)$ follows from the fact that $\tfrac{1}{\eta (1-\alpha)} \leq \mu + \tfrac{1}{\eta}$. The desired result follows directly by multiplying $(1-\alpha)^{T-t}$ on both sides of the above inequality and summing up from $t=1$ to $T$. 

\subsubsection{Proof of \Cref{prop:constraintviolation}} 
We first define the perturbation function, given $\zeta\in \mR^m$, $f(\zeta) = \max\{V_0^{\pi}(\rho)| V_{i}^{\pi}(\rho) \ge c_i + \zeta_i\}$. It was shown in \cite[Proposition 1]{paternain2019safe} that $f(\zeta)$ is  concave in $\zeta$. 

For a given $\zeta\in\mR^m$ and any policy  $\pi \in\left\{\pi\in\Pi| V_i^{\pi}(\rho) \ge c_i + \zeta_i\right\}$, we have that
\begin{align*}
    f(0) - \langle \zeta, \lambda^*\rangle &\overset{(i)}= \mcL(\pi^*, \lambda^*) - \langle \zeta, \lambda^*\rangle \\
    &\ge \mcL(\pi, \lambda^*) - \langle \zeta, \lambda^*\rangle \\
    &= V_0^\pi(\rho) + \langle V^{\pi}(\rho) -\cb, \lambda^*\rangle \\
    &\ge V_0^\pi (\rho),
\end{align*}
where $(i)$ follows from the fact that $f(0) = V_0^*(\rho) = \mcL(\pi^*, \lambda^*)$.
The above inequality implies that 
\begin{align*}
    f(0) - \langle \zeta, \lambda^*\rangle  \ge f(\zeta).
\end{align*}
Moreover, using above inequality, we have 
\begin{align*}
    V_0^{\tilde{\pi}}(\rho)\le f(-(\cb - V^{\tilde\pi}(\rho))_+) \le V_0^{\pi^*} + \langle (\cb - V^{\tilde\pi}(\rho))_+, \lambda^*\rangle. 
\end{align*}
Combing the above inequality and the bound given in the proposition, i.e.\[V_0^*(\rho) - V_0^{\tilde{\pi}}(\rho)+ \langle \lambda, (\cb - V^{\tilde\pi}(\rho))_+ \rangle\le\sigma,\] we obtain
\begin{equation*}
    \langle\lambda - \lambda^*, (\cb - V^{\tilde\pi}(\rho))_+ \rangle \le \sigma.
\end{equation*}
Using the condition that $\lambda\succeq \lambda^*+B\mathbf{1}$, we complete the proof.

\subsection{Proof of \Cref{corollary:totalcomplexity}}\label{sec:proofofcorollary1}
We pick $\tau \leq {\epsilon}/{\log (|\mca|)}$, $\mu \leq {\epsilon}/{(6m B^2)}$,
    \begin{align}\label{eq:deltaorder}
    \delta\le \max \bigg\{\epsilon^2/\big(8L_\nu R_{\max}( \tfrac{2R_{\max}}{1-\gamma} + \tfrac{2\sqrt{m}B}{\eta})^2\eta\sqrt{m}B\big), \tfrac{\epsilon}{L_\nu r_{0,\max}}\sqrt{\tfrac{\mu}{2L_d}}\bigg\},
    \end{align}
    and 
    \begin{align}\label{eq:Torder}
    T\geq \sqrt{\tfrac{2 L_d}{\mu}}\max\bigg\{2\log\bigg(2\sqrt{\eta K_0(\lambda^*_{\tau,\mu})}\bigg(\tfrac{2R_{\max}}{1-\gamma} + \tfrac{2\sqrt{m}B}{\eta}\bigg)\tfrac{1}{\epsilon}\bigg), \log\bigg(\tfrac{2e\sqrt{m}B R_{\max}}{(1-\gamma)\epsilon}\bigg)\bigg\}.
    \end{align}
    
    By the design of $\textrm{RegPO}$ (see \Cref{app:regpo}), we have that for all $t=1,\ldots T$, $\|\delta_t\|_2\le L_{\nu}R_{\max}\delta$ and $|\Delta_t|\le L_{\nu}r_{0,\max}\delta$, and thus, following from Theorem \ref{thm1}, $V_0^*(\rho)-V^{\tilde \pi}_0(\rho)\leq 5\epsilon$ and $\|({c}-{V}^{\tilde \pi}(\rho))_+\|_1\leq {6\epsilon}/{B}$.
    
To obtain the overall complexity, following our choices $\tau=\mathcal{O}(\epsilon)$ and $\mu=\mathcal{O}(\epsilon)$, the Lipschitz constant $L_d= \tfrac{2R_{\max}^2L_\nu}{(1-\gamma)^2\tau} +\mu=\mathcal{O}({1}/{\epsilon})$ (see \Cref{prop:smoothness}), and then $\delta=\mathcal{O}\left(\epsilon^2\right)$ due to \cref{eq:deltaorder} and $T= \mathcal{O}({1}/{\epsilon}\log({1}/{\epsilon}))$ due to \cref{eq:Torder}. Hence, $\textrm{RegPO}$ $(\delta,\lambda,\tau)$ takes $K = \mathcal{O}(\log(1/(\delta\tau))) = \mathcal{O}(\log({1}/{\epsilon}))$ iterations (see \Cref{app:regpo}). Therefore, the overall computational complexity of Algorithm \ref{alg:arcpo} is given by $KT =\mathcal O({1}/{\epsilon}\log^2({1}/{\epsilon}))$.

\end{document}